\title{Projectively coresolved Gorenstein flat dimension of groups}
\author{Dimitra-Dionysia Stergiopoulou}
\keywords{Gorenstein homological algebra, Gorenstein flat module, Projectively coresolved Gorenstein flat dimension, Group ring, Group extension, Cofibrant}
\subjclass{Primary: 16E05, 16E10, 18G20, 18G25}
\newtheorem{Lemma}{Lemma}[section]
\newtheorem{Proposition}[Lemma]{Proposition}
\newtheorem{Theorem}[Lemma]{Theorem}
\newtheorem{Corollary}[Lemma]{Corollary}
\newtheorem{Remark}[Lemma]{Remark}
\newtheorem{Definition}[Lemma]{Definition}
\begin{document}

\begin{abstract} 
 In this paper, we introduce and study the projectively coresolved Gorenstein flat dimension of a group $G$ over a commutative ring $R$ and we prove that this dimension enjoys all the properties of the cohomological and the Gorenstein cohomological dimension. We also provide good estimations for the Gorenstein global dimension of $RG$ in terms of this dimension and the Gorenstein global dimension of $R$. Moreover, we study special cases of groups, such as $\textsc{\textbf{lh}}\mathfrak{F}$-groups, and show that for such a group every Gorenstein projective $RG$-module is Gorenstein flat when the global dimension of $R$ is finite.
\end{abstract}

\maketitle

\addtocounter{section}{-1}
\section{Introduction}The concept of $G$-dimension for commutative Noetherian rings was introduced by Auslander and Bridger \cite{AB} and has been extended to modules over any ring $R$ through the notion of a Gorenstein projective module by Enochs and Jenda \cite{EJ}. Gorenstein injective modules are defined dually in \cite{EJ}, while Gorenstein flat modules were defined in \cite{EJ2}. The relative homological dimensions based on these modules were defined in \cite{H1}, which is the standard reference for these notions. Projectively coresolved Gorenstein flat modules (PGF modules, for short) were introduced by Saroch and Stovicek \cite{SS}. Over a ring $R$, these modules are the syzygies of the acyclic complexes of projective modules that remain acyclic after applying the functor $I\otimes_R  \_\!\_$ for every injective module $I$. It is clear that PGF modules are Gorenstein flat. As shown in \cite[Theorem 4.4]{SS}, the PGF modules are also Gorenstein projective. Dalezios and Emmanouil \cite{DE} studied the relative homological dimension based on the class of PGF modules. The PGF dimension is a refinement of the ordinary projective 
dimension, whereas the Gorenstein projective dimension is a refinement of the 
PGF dimension. Holm’s metatheorem \cite{Ho} states that every result in classical homological algebra has a counterpart in Gorenstein homological algebra. However, the relation between Gorenstein projective and Gorenstein flat modules is not well understood. For example, projective modules are always flat, but it is not clear whether all Gorenstein projective modules are Gorenstein flat.

In this paper, we define the projectively coresolved Gorenstein flat dimension of a group $G$ over a commutative ring $R$ (PGF dimension of $G$ over $R$, for short) as the PGF dimension of the trivial $RG$-module $R$, which we denote by ${\widetilde{\textrm{Gcd}}_{R}G}$, and we prove that this dimension enjoys all the properties of the Gorenstein cohomological dimension studied by Emmanouil and Talelli in \cite{Em-Ta,ET,ET2}. A central role in the characterization of the finiteness of ${\widetilde{\textrm{Gcd}}_{R}G}$ is played by the characteristic modules of $G$ over $R$ (see Definition \ref{defi}). Characteristic modules were used before to prove many properties of the Gorenstein cohomological dimension $\textrm{Gcd}_R G$ of a group $G$ (see \cite{BDT,Tal}). Many of our results concerning ${\widetilde{\textrm{Gcd}}_{R}G}$ depend on the finiteness of the invariants  $\textrm{sfli}R$ and $\textrm{spli}R$. The invariant $\textrm{spli}R$ has been defined by Gedrich and Gruenberg \cite{GG} as the supremum of the projective lengths (dimensions) of injective left $R$-modules, while the invariant $\textrm{sfli}R$ is defined similarly as the supremum of the flat lengths (dimensions) of injective left $R$-modules. Since for every commutative ring $R$ the finiteness of the invariant $\textrm{sfli}R$ is equivalent with the finiteness of the Gorenstein weak global dimension of $R$ and the finiteness of the invariant $\textrm{spli}R$ is equivalent with the finiteness of the Gorenstein global dimension of the ring $R$ (see Section 1), we generalize all results in \cite{Em-Ta, ET,ET2} which are stated over commutative rings of finite global or weak global dimension in this Gorenstein setting. In particular, we may replace in every statement in \cite{Em-Ta, ET,ET2} the global dimension $\textrm{gl.dim}R$ of $R$ with the invariant $\textrm{spli}R$ and the weak global dimension $\textrm{wgl.dim}R$ of $R$ with the invariant $\textrm{sfli}R$.

The following statement is the first main result of this paper (see Theorem \ref{theorr}); it generalizes \cite[Theorem 1.7]{ET}.

\begin{Theorem}
	Let $G$ be a group and $R$ be a commutative ring such that $\textrm{spli}R<\infty$. The following conditions are equivalent:
	\begin{itemize}
		\item[(i)] ${\widetilde{\textrm{Gcd}}_{R}G}<\infty$.	
		\item[(ii)]${\textrm{Gcd}}_{R}G<\infty$.
		\item[(iii)] There exists a characteristic module $\Lambda$ for $G$ over $R$.
		\item[(iv)] Every $RG$-module has finite PGF dimension.
		\item[(v)] Every $RG$-module has finite Gorenstein projective dimension.
		\item[(vi)] $\textrm{silp}(RG)=\textrm{spli}(RG)< \infty$.
	\end{itemize}
	In this case, we have $\textrm{PGF-dim}_{RG}M=\textrm{Gpd}_{RG}M$ for every $RG$-module $M$.
\end{Theorem}

The second main result concerns approximations of the PGF global dimension of $RG$ over any commutative ring $R$ and any group $G$, and yields also approximations for the Gorenstein global dimension of $RG$ (see Corollaries \ref{corrr} and 2.20).

 \begin{Theorem}
 	Let $R$ be a commutative ring and $G$ be a group. Then, 
 	\begin{itemize}
 		\item[(i)] $ \textrm{max}\{\textrm{PGF-gl.dim}R, {\widetilde{\textrm{Gcd}}_{R}G}\}\leq\textrm{PGF-gl.dim}(RG) \leq {\widetilde{\textrm{Gcd}}_{R}G} + \textrm{PGF-gl.dim}R,$
 		\item[(ii)] $\textrm{max}\{\textrm{Ggl.dim}R, \textrm{Gcd}_{R}G\} \leq\textrm{Ggl.dim}(RG) \leq {{\textrm{Gcd}}_{R}G} + \textrm{Ggl.dim}R.$
 	\end{itemize}
 \end{Theorem}
 
 The following are the properties enjoyed by the PGF dimension of a group $G$ over a commutative ring $R$, which we prove in Sections 3 and 4:
 \begin{itemize}
 	\item[(i)]for every commutative ring $R$ and every group $G$, the dimension ${\widetilde{\textrm{Gcd}}_{R}G}$ vanishes if and only if $G$ is a finite group (see Theorem \ref{theo1});
 	\item[(ii)]if $\textrm{sfli}R$ is finite and $H\subseteq G$ is a subgroup, then ${\widetilde{\textrm{Gcd}}_{R}H}\leq {\widetilde{\textrm{Gcd}}_{R}G}$ (see Proposition \ref{prop38});
 	\item[(iii)]if $\textrm{sfli}R$ is finite and $H\trianglelefteq G$ is a normal subgroup, then ${\widetilde{\textrm{Gcd}}_{R} G}\leq {\widetilde{\textrm{Gcd}}_{R}}H + {\widetilde{\textrm{Gcd}}_{R}}(G/H)$ (see Proposition \ref{prop56});
 	\item[(iv)]if $\textrm{sfli}R$ is finite and $H\subseteq G$ is a finite subgroup with Weyl group $W=N_G(H)/H$, then ${\widetilde{\textrm{Gcd}}_{R}W}\leq{\widetilde{\textrm{Gcd}}_{R}G}$ (see Corollary \ref{cor216});
 	\item[(v)] if the group $G$ is expressed as the union of a continuous ascending chain of subgroups $(G_{\lambda})_{\lambda}$, then ${\widetilde{\textrm{Gcd}}_{R}G}\leq 1 +\textrm{sup}_{\lambda}{\widetilde{\textrm{Gcd}}_{R}G_{\lambda}}$ (see Corollary \ref{cor57}).
 \end{itemize}
 
  \medskip
 Furthermore, we generalize \cite[Theorem 6,4]{Em-Ta} by giving a finiteness criterion for the PGF dimension of a group $G$ over a commutative ring $R$ of finite Gorenstein global dimension which involves only complete cohomology in the following result. We also remove the condition that the commutative ring $R$ is Noetherian (see Theorem \ref{theo43}).
 \begin{Theorem}Let $G$ be a group and $R$ be a commutative ring such that $\textrm{spli}R<\infty$. The following are equivalent:
 	\begin{itemize}
 		\item[(i)] ${\widetilde{\textrm{Gcd}}_{R}G}<\infty$.
 		\item[(ii)] There exists an $R$-split monomorphism of $RG$-modules
 		$\iota: R \rightarrow \Lambda$, where $\Lambda$ is $R$-projective, such that the image of $\iota \in \textrm{Hom}_{RG}(R,\Lambda)=\textrm{H}^{\,0} (G,\Lambda)$ vanishes in the group $\widehat{\textrm{Ext}}^{0}_{RG}(R,\Lambda)={\widehat{\textrm{H}}}^{\,0}(G,\Lambda)$.
 		\item[(iii)]There exists a characteristic module for $G$ over $R$.
 		\item[(iv)] $\textrm{silp}(RG)=\textrm{spli}(RG)< \infty$.
 	\end{itemize}
 \end{Theorem}


Special cases of groups are also studied. For example, the following theorem determines the PGF dimension of $\textsc{\textbf{lh}}\mathfrak{F}$-groups and yields generalizations of \cite[Theorem 3.1]{Bis} and \cite[Theorem A.1]{ET2} (see Theorem \ref{Theo712}).
 \begin{Theorem}We have ${\widetilde{\textrm{Gcd}}_R G}=\textrm{Gcd}_R G=\textrm{pd}_{RG} B(G,R)$, for every commutative ring $R$ such that $\textrm{spli}R<\infty$ and every $\textsc{\textbf{lh}}\mathfrak{F}$-group $G$.
 \end{Theorem}
 
 It is well known that over a right $\aleph_0$-coherent ring, every countably presented Gorenstein projective $R$-module is PGF and hence is Gorenstein flat (see \cite[Lemma 3.4]{WL}). We do not know any example of a Gorenstein projective module which is not (projectively coresolved) Gorenstein flat. The relation between Gorenstein projective and Gorenstein flat modules is a main open problem in the area. According to Saroch and Stovicek, the notion of a PGF module could serve as an alternative definition of a Gorenstein projective module over any ring (see \cite[Remark on pages 23-24]{SS}). Thus, the following result of the paper is noteworthy (see Theorem \ref{theo94}).
 \begin{Theorem}Let $G$ be a group in $\textsc{\textbf{lh}}\mathfrak{F}$ or of type $\Phi_R$ over a commutative ring $R$ of finite global dimension. Then, the class of Gorenstein projective $RG$-modules coincides with the class of PGF $RG$-modules. Hence, every Gorenstein projective $RG$-module is Gorenstein flat.
 \end{Theorem}
 
 The contents of the paper are as follows. In Section 1 we establish notation, terminology and preliminary results that will be used in the sequel. In Section 2 we provide finiteness criteria for the PGF dimension of a group $G$ over a commutative ring $R$ of finite Gorenstein global dimension (see Theorem \ref{theorr}). A useful tool for our study is this section is the concept of a characteristic module for a group $G$ over $R$ (see Definition \ref{defi}). Moreover, we obtain good estimations for the invariant $\textrm{spli}(RG)$ in terms of the dimension ${\widetilde{\textrm{Gcd}}_{R}G}$ and the invariant $\textrm{spli}R$. We also prove that the invariant $\textrm{spli}(RG)$ is subadditive under group extensions over any commutative ring of finite Gorenstein global dimension. These results yield nice estimations for the PGF and Gorenstein global dimensions $\textrm{PGF-gl.dim}(RG)$ and $\textrm{Ggl.dim}(RG)$. Furthermore, we provide an upper bound for the PGF dimension $\textrm{PGF-dim}_{RG}M$ of a $RG$-module $M$ in terms of PGF dimension $\widetilde{\textrm{Gcd}}_{R}G$ of $G$ over $R$ and the PGF dimension $\textrm{PGF-dim}_{R}M$ of the restricted $k$-module $M$. In this way, we give a PGF analogue of the well-known bound of the projective dimension $\textrm{pd}_{RG}M$ of an $RG$-module $M$, in terms of the cohomological dimension $\textrm{cd}_R G$ of the group $G$ and the projective dimension $\textrm{pd}_R M$ of the restricted $R$-module $M$.

In Section 3 we study the dependence of the PGF dimension of a group $G$ upon the coefficient ring, its behaviour with respect to subgroups and extensions and the subadditivity with respect to group extensions. Our main tool is the existence of a characteristic module obtained from Corollary \ref{cor1}. Moreover, we obtain PGF and Gorenstein projective analogues for Serre's theorem (see \cite[VIII, Theorem 3.1]{Br}).

In Section 4 we study hyperfinite extensions of PGF modules and we show that every hyper-${\tt PGF}(R)$ module is PGF (see Corollary \ref{cor54}). Using this result, for any group $G$ which is expressed as the union of a continuous ascending chain of subgroups $(G_{\lambda})_{\lambda}$, we provide an upper bound in terms of the PGF dimension of its subgroups (see Corollary \ref{cor57}).

In Section 5 we examine the relevance of complete cohomology in the study of modules of finite PGF dimension and we give a finiteness criterion for ${\widetilde{\textrm{Gcd}}_{R}G}$ which involves only complete cohomology (see Theorem \ref{theo43}). Furthermore, we show that the conditions characterizing the finiteness of the PGF dimension of a module $M$, described in \cite[Theorem 3.4(ii),(iii)]{DE}, may be relaxed by removing the assumption that the module $K$ has finite projective dimension in each case and assuming instead that certain elements of complete cohomology groups vanish (see Theorem \ref{Theor61}).

In Section 6 we examine the special case of groups of type FP$_{\infty}$ with finite PGF dimension and show that these groups have characteristic modules of type FP (see Theorem \ref{theo76}). Moreover, we give an analogue of Fel'dman's theorem \cite{Fe} concerning PGF dimensions of groups. In particular, we examine conditions under which the inequality ${\widetilde{\textrm{Gcd}}_{R} G}\leq {\widetilde{\textrm{Gcd}}_{R}}H + {\widetilde{\textrm{Gcd}}_{R}}(G/H)$ of Proposition \ref{prop56} is an equality (see Theorem \ref{theo613} and Corollary \ref{cor614}).

In Section 7 we determine the PGF dimension ${\widetilde{\textrm{Gcd}}_{R}G}$ of an $\textsc{\textbf{lh}}\mathfrak{F}$-group $G$ over a commutative ring of finite Gorenstein global dimension, in terms of the projective dimension of the $RG$-module $B(G,R)$.

In Section 8 we show that every cofibrant $RG$-module is PGF. Using this result, we prove that for every commutative ring $R$ of finite global dimension and every group $G$ which is $\textsc{\textbf{lh}}\mathfrak{F}$ or of type $\Phi_R$, every Gorenstein projective $RG$-module is Gorenstein flat. This result is interesting, since it is not known whether all Gorenstein projective modules are Gorenstein flat over an arbitrary ring.

\smallskip

\noindent\textbf{Conventions.} All rings are assumed to be associative and unital and all ring homomorphisms will be unit preserving. Unless otherwise specified, all modules will be left $R$-modules.

\section{Preliminaries}
In this section we collect certain notions and preliminary results that will be used throughout the paper.

\subsection{Gorenstein projective, Gorenstein flat and PGF modules.}
An acyclic complex $\textbf{P}$ of projective modules is said to be a complete 
projective resolution if the complex of abelian groups $\mbox{Hom}_R(\textbf{P},Q)$
is acyclic for every projective module $Q$. Then, a module is Gorenstein 
projective if it is a syzygy of a complete projective resolution. The 
Gorenstein projective dimension $\mbox{Gpd}_RM$ of a module $M$ is the 
length of a shortest resolution of $M$ by Gorenstein projective modules. 
If no such resolution of finite length exists, then we write 
$\mbox{Gpd}_RM = \infty$. If $M$ is a module of finite projective 
dimension, then $M$ has finite Gorenstein projective dimension as 
well and $\mbox{Gpd}_RM = \mbox{pd}_RM$.

An acyclic complex $\textbf{F}$ of flat modules is said to be a complete flat
resolution if the complex of abelian groups $I \otimes_R \textbf{F}$ is acyclic 
for every injective right module $I$. Then, a module is Gorenstein 
flat if it is a syzygy of a complete flat resolution. We let ${\tt GFlat}(R)$ 
be the class of Gorenstein flat modules. The Gorenstein flat dimension 
$\mbox{Gfd}_RM$ of a module $M$ is the length of a shortest resolution 
of $M$ by Gorenstein flat modules. If no such resolution of finite length
exists, then we write $\mbox{Gfd}_RM = \infty$. If $M$ is a module of 
finite flat dimension, then $M$ has finite Gorenstein flat dimension as
well and $\mbox{Gfd}_RM = \mbox{fd}_RM$.

The notion of a projectively coresolved Gorenstein flat module (PGF-module, for short) is introduced by Saroch and Stovicek \cite{SS}. A PGF module is a syzygy of an acyclic complex of projective modules $\textbf{P}$, which is such that the complex of abelian groups $I \otimes_R \textbf{P}$ is acyclic for every injective module $I$. It is clear that the class ${\tt PGF}(R)$ of PGF modules is contained in ${\tt GFlat}(R)$. The inclusion ${\tt PGF}(R) \subseteq {\tt GProj}(R)$ is proved in \cite[Theorem 4.4]{SS}. Moreover, the class of PGF $R$-modules, is closed under extensions, direct sums, direct summands and kernels of epimorphisms. The PGF dimension $\mbox{PGF-dim}_RM$ of a module $M$ is the length of a shortest resolution of $M$ by PGF modules. If no such resolution of finite length exists, then we write $\mbox{PGF-dim}_RM = \infty$. If $M$ is a module of finite projective dimension, then $M$ has finite PGF dimension as well and $\mbox{PGF-dim}_RM = \mbox{pd}_RM$ (see \cite[Proposition 2.2, Corollary 3.7(i)]{DE}).

\begin{Remark}\label{r1}\rm Invoking \cite[Propositions 2.4 and 3.6]{DE}, it follows easily that for every short exact sequence of $R$-modules $0\rightarrow M'\rightarrow M \rightarrow M'' \rightarrow 0$, we have $\textrm{PGF-dim}_RM''\leq \textrm{max}\{1+\textrm{PGF-dim}_R M',\textrm{PGF-dim}_R M \}$.\end{Remark}
	
\begin{Lemma}\label{lem63}
	Let $R$ be a ring and consider an integer $n\geq 1$ and an exact sequence of $R$-modules $$0\rightarrow M_n \rightarrow \cdots \rightarrow M_1 \rightarrow M_0 \rightarrow M \rightarrow 0.$$ Then, $\textrm{PGF-dim}_{R}M\leq \textrm{max}\{i+\textrm{PGF-dim}_{R}M_i: i=0,\dots,n\}$.
\end{Lemma}

\begin{proof}We proceed by induction on $n\geq 1$. The case $n=1$ follows from Remark \ref{r1}. We assume now that $n>1$ and let $M'=\textrm{Im}(M_1 \rightarrow M_0)$. Applying the induction hypothesis on the exact sequence $$0\rightarrow M_n \rightarrow \cdots \rightarrow M_1 \rightarrow M' \rightarrow 0,$$ we obtain that $\textrm{PGF-dim}_{R}M'\leq \textrm{max}\{i-1+\textrm{PGF-dim}_{R}M_i: i=1,\dots,n\}$. Using again Remark \ref{r1} and the short exact sequence $0\rightarrow M' \rightarrow M_0 \rightarrow M \rightarrow 0$, we get $\textrm{PGF-dim}_{R}M\leq \textrm{max}\{1+\textrm{PGF-dim}_{R}M',\textrm{PGF-dim}_{R}M_0\}\leq \textrm{max}\{i+\textrm{PGF-dim}_{R}M_i: i=0,\dots,n\}$, as needed. 
\end{proof}

\subsection{Gedrich-Gruenberg invariants and Gorenstein global dimensions}The invariants $\textrm{silp}R$, $\textrm{spli}R$ were defined by Gedrich and Gruenberg in \cite{GG} as the supremum of the injective lengths (dimensions) of projective modules and the supremum of the projective lengths (dimensions) of injective modules, respectively. The invariant $\textrm{sfli}R$ is defined similarly as the supremum of the flat lengths (dimensions) of injective modules. It is clear that $\textrm{sfli}R\leq \textrm{spli}R$ for every ring $R$. The Gorenstein global dimension $\textrm{Ggl.dim}R$ of a ring $R$ is defined as the supremum of the Gorenstein projective dimensions of $R$-modules, while the Gorenstein weak global dimension $\textrm{Gwgl.dim}R$ of a ring $R$ is defined as the supremum of the Gorenstein flat dimensions of $R$-modules. The PGF global dimension of a ring $R$ defined recently in \cite{DE} as the supremum of the PGF dimensions of $R$-modules. Considering the case where the ring $R$ is commutative, \cite[Corollary 5.4]{DE} yields the inequality $\textrm{silp}R\leq\textrm{spli}R$, with equality if $\textrm{spli}R<\infty$. Therefore, for every commutative ring $R$, invoking \cite[Theorem 4.1]{Emm3}, we infer that the invariant $\textrm{spli}R$ is finite if and only if the Gorenstein global dimension $\textrm{Ggl.dim}R$ is finite, and moreover $\textrm{Ggl.dim}R=\textrm{spli}R$. Furthermore, for every commutative ring $R$, invoking \cite[Theorem 2.4]{CET}, we infer that the invariant $\textrm{sfli}R$ is finite if and only if the Gorenstein weak global dimension $\textrm{Gwgl.dim}R$ is finite, and moreover $\textrm{Gwgl.dim}R=\textrm{sfli}R$.

\begin{Lemma}\label{lemgl}Let $S$ be a ring which is isomorphic with its opposite $S^{\textrm{op}}$. Then, $\textrm{PGF-gl.dim}S=\textrm{Ggl.dim}S.$
\end{Lemma}

\begin{proof}Since the PGF dimension bounds the Gorenstein projective dimension, we have $\textrm{Ggl.dim}S\leq \textrm{PGF-gl.dim}S$. Thus, it remains to show that $\textrm{PGF-gl.dim}S\leq \textrm{Ggl.dim}S$. For that, it suffices to assume that $\textrm{Ggl.dim}S<\infty$. Then, $\textrm{Ggl.dim}S=\textrm{spli}S<\infty$, by \cite[Theorem 4.1]{Emm3}. Since $S\cong {S}^{\textrm{op}}$ and $\textrm{sfli}S\leq \textrm{spli}S<\infty$, invoking \cite[Theorem 5.1]{DE}, we infer that $\textrm{PGF-gl.dim}S=\textrm{Ggl.dim}S$, as needed.
\end{proof}

\subsection{Group rings.}Let $R$ be a commutative ring, $G$ be a group and consider the associated group ring $RG$. The standard reference for group cohomology is \cite{Br}. The anti-isomorphism of $RG$, which is induced by the map $g\rightarrow g^{-1}$, $g\in G$, enables us to view every right $RG$-module $M$ as a left $RG$-module $M$, and hence $kG\cong {(kG)}^{\textrm{op}}$. Using the diagonal action of the group $G$, the tensor product $M\otimes_R N$ of two $RG$-modules is also an $RG$-module; we define $g\cdot (x \otimes y)=gx \otimes gy \in M\otimes_R N$ for every $g\in G$, $x\in M$ and $y\in N$. We note that for every projective $RG$-module $M$ and every $R$-projective $RG$-module $N$, the diagonal $RG$-module $M\otimes_R N$ is also projective.

An $RG$-module $M$ is said to be of type FP$_{\infty}$ (respectively, of type FP) if M admits a projective resolution $\cdots \rightarrow P_n \rightarrow P_{n-1}\rightarrow \cdots \rightarrow P_0 \rightarrow M \rightarrow 0,$ where $P_i$ is finitely generated projective for every $i\geq 0$ (respectively, $P_i$ is finitely generated projective and vanish for $n>>0$). The group $G$ is said to be of type FP$_{\infty}$ (respectively, of type FP) over $R$ if the trivial $RG$-module $R$ is of type FP$_{\infty}$ (respectively, of type FP).

\smallskip
The following lemmata concern properties of PGF modules over group rings that will be used in the sequel.

\begin{Lemma}\label{lem46}
	Let $R$ be a commutative ring, $G$ be a group and $H$ be a subgroup of $G$.
	\begin{itemize}
		\item[(i)] For every PGF $RH$-module $M$, the $RG$-module $\textrm{Ind}^G_H M$ is also PGF.
		
		\item[(ii)] For every $RG$-modules $M$, $N$ such that $M$ is projective and $N$ is PGF as $RH$-module, the $RG$-module $M\otimes_{RH}N$ is PGF.
		
		\item[(iii)] For every $RH$-module, $\textrm{PGF-dim}_{RG}(\textrm{Ind}_H^G M) \leq \textrm{PGF-dim}_{RH} M$.
	\end{itemize}  
\end{Lemma}

\begin{proof} (i) Let $M$ be a PGF $RH$-module. Then, there exists an acyclic complex of projective $RH$-modules $$\textbf{P}=\cdots \rightarrow P_{2}\rightarrow P_1\rightarrow P_0 \rightarrow P_{-1}\rightarrow \cdots,$$ such that $M=\textrm{Im}(P_1 \rightarrow P_0)$ and the complex $I\otimes_{RH}\textbf{P}$ is exact, whenever $I$ is an injective $RH$-module. Then, the induced complex $$\textrm{Ind}^G_H\textbf{P}=\cdots \rightarrow\textrm{Ind}^G_H P_2 \rightarrow\textrm{Ind}^G_H P_1\rightarrow\textrm{Ind}^G_H P_0 \rightarrow\textrm{Ind}^G_H P_{-1}\rightarrow \cdots,$$ is an acyclic complex of projective $RG$-modules and has the $RG$-module $\textrm{Ind}^G_H M$ as syzygy. Moreover, for every injective $RG$-module $I$, the restricted $RH$-module $I|_H$ is also injective. Thus, the isomorphism of complexes $I\otimes_{RG}\textrm{Ind}^G_H \textbf{P} \cong I|_H\otimes_{RH}\textbf{P}$ implies that the $RG$-module $\textrm{Ind}^G_H M$ is also PGF.
	
	(ii) Since the class of PGF modules is closed under direct sums and direct summands, it suffices to assume that $M=RG$. Then, (i) implies that the $RG$-module $M\otimes_{RH}N$ is PGF.
	
	(iii) It suffices to assume that $\textrm{PGF-dim}_{RH} M=n$ is finite. Then, there exist PGF $RH$-modules $P_0, P_1, \dots ,P_n$ and an exact sequence of $RH$-modules $$0\rightarrow P_n \rightarrow \cdots \rightarrow P_1 \rightarrow P_0 \rightarrow M \rightarrow 0.$$ By (i) we have already prove, the $RG$-modules $\textrm{Ind}^G_H P_i$ are PGF, for every $i=0,\dots ,n$. Thus, the induced exact sequence of $RG$-modules $$0\rightarrow\textrm{Ind}^G_H P_n \rightarrow \cdots \rightarrow\textrm{Ind}^G_H P_1 \rightarrow\textrm{Ind}^G_H P_0 \rightarrow\textrm{Ind}^G_H M \rightarrow 0,$$ yields $\textrm{PGF-dim}_{RG}(\textrm{Ind}_H^G M) \leq n$.\end{proof}
	
\begin{Lemma}\label{lemZ}Let $R$ be a commutative ring, $G$ be a group and $H$ be a normal subgroup of $G$. Then, for every projective $R[G/H]$-module $M$ and every $RG$-module $N$ which is projective as $RH$-module, the $RG$-module $M\otimes_R N$ is projective.
\end{Lemma}	

\begin{proof}It suffices to assume that $M=R[G/H]$. Then, $M\otimes_R N=R[G/H]\otimes_R N\cong \textrm{Ind}^G_H \textrm{Res}^G_H N$ which is a projective $RG$-module (see \cite[Proposition 5.6(a)]{Br}). \end{proof}

\begin{Lemma}\label{lemmm22}Let $R$ be a commutative ring, $G$ be a group and $H$ be a subgroup of $G$. Then, $\textrm{spli}(RH)\leq \textrm{spli}(RG)$.
\end{Lemma}

\begin{proof}It suffices to assume that $\textrm{spli}(RG)=n<\infty$. Let $I$ be an injective $RH$-module. Since the $RG$-module $\textrm{Coind}^G_H I$ is injective, we have $\textrm{pd}_{RG}\textrm{Coind}^G_H I\leq n$ and hence $\textrm{pd}_{RH}\textrm{Coind}^G_H I\leq n$. As the $RH$-module $I$ is a direct $RH$-summand of $\textrm{Coind}^G_H I$, we obtain that $\textrm{pd}_{RH} I\leq n$. Consequently, we have $\textrm{spli}(kH)\leq n$. 
\end{proof}	

One of our main tools in this paper, is the concept of a characteristic module for a group $G$ over a commutative ring $R$.

\begin{Definition}\label{defi}Let $R$ be a commutative ring and $G$ be a group. A characteristic module for $G$ over $R$ is defined to be an $R$-projective $RG$-module $\Lambda$ with $\textrm{pd}_{RG}\Lambda <\infty$, which admits an $R$-split $RG$-linear monomorphism $i: R \rightarrow \Lambda$.\end{Definition}

 Characteristic modules were used to prove many properties of the Gorenstein cohomological dimension $\textrm{Gcd}_R G$ of a group $G$ (see \cite{BDT,Tal}).

\subsection{$\textsc{\textbf{lh}}\mathfrak{F}$-groups and groups of type $\Phi_R$} The class $\textsc{\textbf{h}}\mathfrak{F}$ was defined by Kropholler in \cite{Kr}. This is the smallest class of groups, which contains the class $\mathfrak{F}$ of finite groups and is such that whenever a group $G$ admits a finite dimensional
contractible $G$-CW-complex with stabilizers in $\textsc{\textbf{h}}\mathfrak{F}$, then we also have $G\in \textsc{\textbf{h}}\mathfrak{F}$. The class $\textsc{\textbf{lh}}\mathfrak{F}$ consists of those groups, all of whose finitely generated subgroups are in $\textsc{\textbf{h}}\mathfrak{F}$. All soluble groups, all groups of finite virtual cohomological dimension and all automorphism groups of Noetherian modules over a commutative ring are $\textsc{\textbf{lh}}\mathfrak{F}$-groups. The class $\textsc{\textbf{lh}}\mathfrak{F}$ is closed under extensions, ascending unions, free products with amalgamation and HNN extensions.

A group $G$ is said to be of type $\Phi_R$ if it has the property that for every $RG$-module $M$, $\textrm{pd}_{RG}M<\infty$ if and only if $\textrm{pd}_{RH}M<\infty$ for every finite subgroup $H$ of $G$. These groups were defined over $\mathbb{Z}$ in \cite{Ta}. Over a commutative ring $R$ of finite global dimension, every group of finite virtual cohomological dimension and every group which acts on a tree with finite stabilizers is of type $\Phi_R$ (see \cite[Corollary 2.6]{MS}).

Let $B(G,R)$ be the $RG$-module which consists of all functions from $G$ to $R$ whose image is a finite subset of $R$. The $RG$-module $B(G,R)$ is $R$-free and $RH$-free for every finite subgroup $H$ of $G$. For every element $\lambda \in R$, the constant function $\iota(\lambda)\in B(G,R)$ with value $\lambda$ is invariant under the action of $G$. The map $\iota: R \rightarrow B(G,R)$ which is defined in this way is then $RG$-linear and $R$-split. Indeed, for every fixed element $g\in G$, there exists an $R$-linear splitting for $\iota$ by evaluating functions at $g$. Moreover, the cokernel $\overline{B}(G,R)$ of $\iota$ is $R$-free (see \cite[Lemma 3.3]{Kr2} and \cite[Lemma 3.4]{BC}). We note that the $RG$-module $B(G,R)$ is a candidate for a characteristic module for any group $G$ over any commutative ring $R$.

\section{Modules of finite PGF dimension}
For any unital commutative ring $R$ and any group $G$, we denote by ${\widetilde{\textrm{Gcd}}_{R}G}$ the PGF dimension of $G$ over $R$, i.e. the PGF dimension of the trivial $RG$ module $R$. In this section, using characteristic modules for a group $G$ over a commutative ring $R$ of finite Gorenstein global dimension, we give finiteness criteria for the PGF dimension of the group $G$. Moreover, we provide good estimations for the Gorenstein global dimension of $RG$, in terms of the Gorenstein global dimension of the base ring $R$ and the dimensions ${\widetilde{\textrm{Gcd}}_{R}G}$, $\textrm{Gcd}_RG$, over any commutative ring $R$ and any group $G$. Finally, we construct an upper bound for the PGF dimension of an $RG$-module $M$, in terms of the PGF dimension of the group $G$ and and the PGF-dimension of the restricted $R$-module $M$.

\subsection{Finiteness criteria for the PGF dimension of groups}The goal of this subsection is to provide finiteness criteria for the PGF dimension ${\widetilde{\textrm{Gcd}}_{R}G}$ of a group $G$ over a commutative ring $R$ of finite Gorenstein global dimension.

\begin{Lemma}\label{lem1}
	Let $R$ be a commutative ring such that $\textrm{sfli}R<\infty$ and $G$ be a group. Then every PGF $RG$-module is PGF as $R$-module.
\end{Lemma}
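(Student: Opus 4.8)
The plan is to start with a PGF $RG$-module $M$, take a witnessing acyclic complex of projective $RG$-modules $\mathbf{P}$ with $M$ a syzygy, such that $I \otimes_{RG} \mathbf{P}$ is acyclic for every injective $RG$-module $I$. Restricting scalars along $R \to RG$, the complex $\mathbf{P}$ becomes an acyclic complex of projective $R$-modules (projective $RG$-modules are projective, hence flat, as $R$-modules; and restriction is exact so acyclicity is preserved), and $M$ is still one of its syzygies. So the only thing to verify is the tensor-exactness condition: for every injective $R$-module $E$, the complex $E \otimes_R \mathbf{P}$ must be acyclic.

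\textbf{Reducing the tensor condition via $\textrm{sfli}R < \infty$.}
The key point is that, as a complex of $R$-modules, $E \otimes_R \mathbf{P}$ computes $\textrm{Tor}^R_*$ of $E$ against the syzygies of $\mathbf{P}$. Since $\textrm{sfli}R = n < \infty$, every injective $R$-module $E$ has flat dimension at most $n$; equivalently $\textrm{Tor}^R_i(E, -) = 0$ for $i > n$. Now the syzygies $M_k$ of $\mathbf{P}$ are PGF $RG$-modules, hence Gorenstein flat $RG$-modules, and therefore (since ${\tt PGF}$ is closed under kernels of epimorphisms, so arbitrarily high syzygies of $M$ are again PGF, hence Gorenstein flat) every $M_k$ is a syzygy, to any desired depth, of flat $R$-modules — this forces $\textrm{Tor}^R_i(E, M_k) = 0$ for all $i \geq 1$ once we dimension-shift past $n$ using that $M_k$ itself has an arbitrarily long resolution by $R$-flats obtained from $\mathbf{P}$. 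Concretely: fix a place in the complex, break $\mathbf{P}$ there into $0 \to M_{k} \to P \to M_{k-1} \to 0$ iterated $n+1$ times to the left, and conclude $\textrm{Tor}^R_i(E, M_k) = \textrm{Tor}^R_{i+n+1}(E, M_{k+n+1})=0$ for $i \geq 1$; this is exactly what is needed to peel off each short exact sequence and show $E \otimes_R \mathbf{P}$ stays exact at every spot.

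\textbf{Handling the remaining obstruction — the "half" of the complex going the other way.}
The subtle point, and what I expect to be the main obstacle, is that a PGF complex $\mathbf{P}$ is a \emph{two-sided} acyclic complex, so when I break it at a fixed degree I get not only the left half (syzygies, handled above) but also need exactness of $E \otimes_R \mathbf{P}$ at the degrees lying to the \emph{right} of $M$. But this is symmetric: those spots involve the cosyzygies of $M$ in $\mathbf{P}$, which are again PGF $RG$-modules (being syzygies of the same complex $\mathbf{P}$), hence again flat as $R$-modules with arbitrarily long $R$-flat coresolutions-to-the-left inside $\mathbf{P}$, so the identical $\textrm{Tor}$-vanishing and dimension-shifting argument applies. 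The cleanest packaging is: every syzygy $N$ appearing anywhere in $\mathbf{P}$ satisfies $\textrm{Tor}^R_i(E,N) = 0$ for all $i \geq 1$ and all injective $R$-modules $E$ (by the $\textrm{sfli}$ dimension shift along $\mathbf{P}$ itself), and then a complex of $R$-flats all of whose syzygies have this property stays acyclic after applying $E \otimes_R -$. This yields that $\mathbf{P}$, viewed over $R$, is a PGF-witnessing complex with syzygy $M$, so $M$ is PGF over $R$, completing the proof.
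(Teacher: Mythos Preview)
Your approach is the same as the paper's: restrict the PGF-witnessing complex $\mathbf{P}$ to $R$ (an acyclic complex of projective $R$-modules) and use $\textrm{sfli}R<\infty$ to show that $E\otimes_R\mathbf{P}$ is acyclic for every injective $R$-module $E$. The paper states this last step as a single fact --- when $\textrm{sfli}R<\infty$, \emph{every} acyclic complex of projective $R$-modules stays acyclic after tensoring with injectives --- while you unpack the dimension-shift.

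One correction to your write-up: the shift goes the other way. From $0\to M_k\to P\to M_{k-1}\to 0$ with $P$ flat one obtains $\textrm{Tor}^R_i(E,M_k)\cong\textrm{Tor}^R_{i+1}(E,M_{k-1})$ for $i\geq 1$, so iterating gives $\textrm{Tor}^R_i(E,M_k)\cong\textrm{Tor}^R_{i+n+1}(E,M_{k-n-1})=0$; you must descend to cosyzygies, not ascend to syzygies. Also, the appeals to the PGF/Gorenstein-flat property of the syzygies over $RG$ and the separate discussion of the ``right half'' are unnecessary: the only input needed is that $\mathbf{P}$ is a doubly infinite acyclic complex of $R$-flat modules, which already follows from restriction of scalars.
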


\begin{proof}
	Let $M$ be a PGF $RG$-module. Then, there exists an acyclic complex of projective $RG$-modules $$\textbf{P}= \cdots \rightarrow P_2\rightarrow P_1 \rightarrow P_0 \rightarrow P_{-1}\rightarrow \cdots$$ which remains acyclic after the application of the functor $I\otimes_{RG} \_\!\_$, for every injective $RG$-module $I$, and $M=\textrm{Im}(P_1 \rightarrow P_0)$. Since every projective $RG$-module is also a projective $R$-module, the restriction of $\textbf{P}$ yields an acyclic complex of projective $R$-modules. Moreover, the condition $\textrm{sfli}R<\infty$ implies that every acyclic complex of projective $R$-modules, remains acyclic after the application of the functor $I\otimes_{R} \_\!\_$, for every injective $R$-module $I$. Thus, $M$ is PGF as $R$-module.
\end{proof}

\begin{Proposition}\label{Prop} Let $R$ be a commutative ring such that $\textrm{sfli}R<\infty$ and consider a group $G$ and an $RG$-module $M$ of finite PGF dimension. If the projective dimension $\textrm{pd}_R M$ is finite, then there exists an $R$-split $RG$-exact sequence $0\rightarrow M \rightarrow \Lambda \rightarrow L \rightarrow 0$, where $L$ is an $R$-projective $RG$-module and $\textrm{PGF-dim}_{RG}M=\textrm{pd}_{RG}\Lambda$.
\end{Proposition}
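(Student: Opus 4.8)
\emph{Strategy.} The plan is to extract the sequence from the general theory of PGF dimension over the group ring $RG$ and then use the standing hypotheses $\textrm{sfli}R<\infty$ and $\textrm{pd}_R M<\infty$ to recognise $L$ as an $R$-projective module and to compute $\textrm{pd}_{RG}\Lambda$. Put $n=\textrm{PGF-dim}_{RG}M$; this is finite by assumption. Applying the characterisation of finite PGF dimension from \cite[Theorem 3.4]{DE} over the ring $RG$, we obtain an exact sequence of $RG$-modules $0\to M\to\Lambda\to L\to 0$ in which $L$ is a PGF $RG$-module and $\textrm{pd}_{RG}\Lambda\le n$. (When $n=0$, i.e.\ when $M$ is itself PGF, one may simply take $\Lambda=M$ and $L=0$.) It then remains to prove that $L$ is $R$-projective --- so that the sequence is automatically $R$-split --- and that the inequality $\textrm{pd}_{RG}\Lambda\le n$ is in fact an equality.

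\emph{$L$ is $R$-projective.} Every projective $RG$-module is projective as an $R$-module, so an $RG$-projective resolution of $\Lambda$ of length $\le n$ is also an $R$-projective resolution, and hence $\textrm{pd}_R\Lambda\le n<\infty$. Feeding this together with the hypothesis $\textrm{pd}_R M<\infty$ into the short exact sequence $0\to M\to\Lambda\to L\to 0$ gives $\textrm{pd}_R L<\infty$. On the other hand, since $\textrm{sfli}R<\infty$, Lemma \ref{lem1} shows that $L$, being PGF over $RG$, is PGF as an $R$-module. But a PGF module of finite projective dimension is projective: by the facts recorded in the preliminaries, $\textrm{PGF-dim}_R L=\textrm{pd}_R L$ once the latter is finite, while a PGF module has PGF dimension $0$, so $\textrm{pd}_R L\le 0$. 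Therefore $L$ is $R$-projective, and the monomorphism $M\hookrightarrow\Lambda$ splits over $R$.

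\emph{$\textrm{pd}_{RG}\Lambda=n$.} We already have $\textrm{pd}_{RG}\Lambda\le n$. For the reverse inequality, observe that $\textrm{pd}_{RG}\Lambda<\infty$ forces $\textrm{PGF-dim}_{RG}\Lambda=\textrm{pd}_{RG}\Lambda$, while $L$ PGF forces $\textrm{PGF-dim}_{RG}L=0$. Since the class of PGF $RG$-modules is resolving (it contains the projectives and is closed under extensions and kernels of epimorphisms), the short exact sequence $0\to M\to\Lambda\to L\to 0$ yields the standard estimate $n=\textrm{PGF-dim}_{RG}M\le\max\{\textrm{PGF-dim}_{RG}\Lambda,\ \textrm{PGF-dim}_{RG}L-1\}=\textrm{pd}_{RG}\Lambda$. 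Equivalently, were $\textrm{pd}_{RG}\Lambda\le n-1$, this very sequence would witness $\textrm{PGF-dim}_{RG}M\le n-1$ through \cite[Theorem 3.4]{DE}, contradicting $\textrm{PGF-dim}_{RG}M=n$. Hence $\textrm{pd}_{RG}\Lambda=n=\textrm{PGF-dim}_{RG}M$, completing the argument.

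\emph{Where the difficulty lies.} The only substantive step is the identification of $L$ with an $R$-projective module, and it is precisely here that both hypotheses are used: $\textrm{sfli}R<\infty$ enters through Lemma \ref{lem1} to transport the PGF property from $RG$ to $R$, and $\textrm{pd}_R M<\infty$ is what forces $\textrm{pd}_R L<\infty$. Once $L$ is known to be $R$-projective, the $R$-splitting is automatic and the dimension equality is a formal consequence of the behaviour of PGF dimension along short exact sequences established in \cite{DE}.
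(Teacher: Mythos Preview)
Your proof is correct and follows essentially the same approach as the paper: invoke \cite[Theorem 3.4]{DE} to produce the sequence with $L$ PGF over $RG$, use Lemma~\ref{lem1} to descend the PGF property to $R$, and combine $\textrm{pd}_R M<\infty$ with $\textrm{pd}_R\Lambda<\infty$ to force $L$ to be $R$-projective. The only cosmetic differences are that the paper quotes the equality $\textrm{pd}_{RG}\Lambda=n$ directly from \cite[Theorem 3.4]{DE} (so your separate verification is redundant but harmless), and the paper establishes the $R$-splitting via $\textrm{Ext}^1_R(L,M)=0$ before proving $R$-projectivity of $L$, whereas you reverse the order and deduce the splitting from the projectivity.
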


\begin{proof}
	Let $\textrm{PGF-dim}_{RG}M=n<\infty$. By \cite[Theorem 3.4]{DE} there exists a short exact sequence of $RG$-modules $0\rightarrow M \rightarrow \Lambda \rightarrow L\rightarrow 0$, where $L$ is a PGF $RG$-module and $\textrm{pd}_{RG}\Lambda=n$. Since $L$ is a PGF $RG$-module, invoking Lemma \ref{lem1} we infer that $L$ is PGF as $R$-module. Thus, by \cite[Proposition 3.6]{DE} we have $\textrm{Ext}^1_{R}(L,M)=0$ and the exact sequence $0\rightarrow M\rightarrow \Lambda\rightarrow L\rightarrow 0$ is $R$-split. Moreover the finiteness of $\textrm{pd}_{R}M$ and $\textrm{pd}_{R}\Lambda$ yields the finiteness of $\textrm{pd}_{R}L$ and so $L$ is $R$-projective; see \cite[Corollary 3.7(i)]{DE}. \end{proof}

\begin{Corollary}\label{cor1}
	Let $R$ be a commutative ring such that $\textrm{sfli}R<\infty$ and $G$ be a group such that ${\widetilde{\textrm{Gcd}}_{R}G}<\infty$. Then, there exists a characteristic module $\Lambda$ for $G$ over $R$ and $\textrm{pd}_{RG}\Lambda ={\widetilde{\textrm{Gcd}}_{R}G}$.
\end{Corollary}

\begin{Remark}\label{rem1}\rm Let $M$, $N$ be two $RG$-modules. Then, the tensor product $M\otimes_R N$ is also an $RG$-module with the diagonal action of the group $G$. By \cite[III Corollary 5.7]{Br} we have that if $M$ is a projective $RG$-module and $N$ is an $R$-projective $RG$-module, then the $RG$-module $M\otimes_R N$ is also projective. Thus, if $\textbf{P}$ is a projective resolution of an $RG$-module $M$ and $N$ is an $R$-projective $RG$-module, then $\textbf{P} \otimes_R N$ is a projective resolution of the $RG$-module $M\otimes_R N$ and $\textrm{pd}_{RG}(M\otimes_R N)\leq \textrm{pd}_{RG}M$.
\end{Remark}

\begin{Proposition}\label{prop1} Let $R$ be a commutative ring and $G$ be a group such that there exists an $R$-split monomorphism of $RG$-modules $\iota: R \rightarrow \Lambda$, where $\Lambda$ is $R$-projective and $\textrm{pd}_{RG} \Lambda <\infty$. Then, for every $R$-projective $RG$-module $M$ we have $\textrm{PGF-dim}_{RG}M \leq \textrm{pd}_{RG} \Lambda$.
\end{Proposition}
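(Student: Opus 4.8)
The plan is to use $\iota$ to manufacture, from the $R$-projective $RG$-module $M$, a coresolution by $RG$-modules of projective dimension at most $n:=\textrm{pd}_{RG}\Lambda$, and then to descend to $n$-th syzygies.

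Since $\iota$ is an $R$-split monomorphism, the sequence $0\to R\stackrel{\iota}{\to}\Lambda\to\Lambda/\iota(R)\to 0$ is $R$-split exact, and $\Lambda/\iota(R)$ is $R$-projective (an $R$-direct summand of $\Lambda$). Tensoring over $R$ with $M$, with the diagonal $G$-action on all tensor products, gives an $R$-split exact sequence of $RG$-modules $0\to M\to\Lambda\otimes_R M\to(\Lambda/\iota(R))\otimes_R M\to 0$ in which $\Lambda\otimes_R M$ and $(\Lambda/\iota(R))\otimes_R M$ are $R$-projective and, by Remark \ref{rem1}, $\textrm{pd}_{RG}(\Lambda\otimes_R M)\leq n$. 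As the cokernel $(\Lambda/\iota(R))\otimes_R M$ is again an $R$-projective $RG$-module, the construction applies to it as well; iterating and splicing the resulting short exact sequences produces an $R$-split exact coresolution $0\to M\to W^0\to W^1\to W^2\to\cdots$ of $RG$-modules with $\textrm{pd}_{RG}W^i\leq n$ for every $i$ and with all of its cokernels $R$-projective.

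Next I would pass to $n$-th syzygies. Fixing a projective resolution of $\Lambda$ over $RG$ of length $n$ and tensoring it over $R$ (diagonally) against the $R$-projective cokernels occurring above gives, via Remark \ref{rem1}, projective resolutions of the $W^i$ of length at most $n$; since $\textrm{pd}_{RG}W^i\leq n$, the $n$-th syzygies $\Omega^n_{RG}W^i$ are projective. Applying $\Omega^n_{RG}$ to the coresolution above (horseshoe applied to each of its $R$-split short exact pieces) therefore yields a projective coresolution $0\to\Omega^n_{RG}M\to\Omega^n_{RG}W^0\to\Omega^n_{RG}W^1\to\cdots$, still split exact over $R$, which, spliced with a projective resolution of $\Omega^n_{RG}M$, gives an acyclic complex $\textbf{P}$ of projective $RG$-modules admitting $\Omega^n_{RG}M$ as a syzygy and split exact as a complex of $R$-modules.

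It then remains to verify that $\textbf{P}$ stays acyclic after applying $I\otimes_{RG}\_\!\_$ for every injective $RG$-module $I$; this gives that $\Omega^n_{RG}M$ is PGF and hence $\textrm{PGF-dim}_{RG}M\leq n$, as wanted. Since every injective $RG$-module is a direct summand of one of the form $\textrm{Hom}_R(RG,D)$ with $D$ an injective $R$-module, it suffices to treat such coinduced $I$, for which $I\otimes_{RG}\textbf{P}$ can be analysed in terms of $D$ and the underlying $R$-complex of $\textbf{P}$, using the $R$-split exactness established above; alternatively, one reformulates the goal via the equivalent conditions for finiteness of PGF dimension in \cite[Theorem 3.4]{DE} and shows that the pertinent cokernel produced in the first step is PGF. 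I expect this last verification to be the main obstacle: the dimension-shifting of the earlier steps is routine, whereas controlling $I\otimes_{RG}\textbf{P}$ — rather than $\textrm{Hom}_{RG}(\textbf{P},\_\!\_)$ on projectives, which would be the Gorenstein-projective analogue — is precisely the point where the definition of PGF modules via tensoring with injectives intervenes and the $R$-split structure of $\textbf{P}$ must be exploited carefully.
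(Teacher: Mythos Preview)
Your construction of the acyclic complex of projectives is essentially the paper's: the paper takes a projective resolution $\textbf{P}$ of $M$ with syzygies $M_i$, observes that $M_n\otimes_R\Lambda$ (and more generally $M_n\otimes_R L^{\otimes j}\otimes_R\Lambda$, with $L=\textrm{Coker}\,\iota$) is $RG$-projective, and splices the short exact sequences $0\to M_n\otimes_R L^{\otimes j}\to M_n\otimes_R L^{\otimes j}\otimes_R\Lambda\to M_n\otimes_R L^{\otimes j+1}\to 0$ with the truncated resolution of $M_n$ to obtain the complex $\mathfrak{P}$. Your ``coresolve first, then take $\Omega^n$'' gives the same complex, though your horseshoe phrasing is imprecise: the clean way is to tensor the $R$-split sequence $0\to R\to\Lambda\to L\to 0$ with the resolution $\textbf{P}$ itself, not to invoke the horseshoe lemma on the cokernels.

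The genuine gap is exactly where you flagged it. Neither of your two suggested routes works as stated. Reducing to coinduced injectives $I=\textrm{Hom}_R(RG,D)$ does not help, since there is no useful identification of $I\otimes_{RG}\mathfrak{P}$ with something built from $D$ and the $R$-complex underlying $\mathfrak{P}$; the adjunction goes the wrong way. And appealing to \cite[Theorem 3.4]{DE} is circular: to conclude $\textrm{PGF-dim}_{RG}M\le n$ from $0\to M\to W^0\to C^1\to 0$ you would need $C^1$ (or its $n$-th syzygy) to be PGF, but $C^1=L\otimes_R M$ is just another $R$-projective $RG$-module and you are back where you started.

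The missing trick, which the paper uses, is this: given an injective $RG$-module $I$, tensor the $R$-split sequence $0\to R\to\Lambda\to L\to 0$ over $R$ with $I$ (diagonal action) to obtain the $RG$-exact sequence $0\to I\to\Lambda\otimes_R I\to L\otimes_R I\to 0$; since $I$ is injective, this sequence is $RG$-split, so $I$ is an $RG$-summand of $\Lambda\otimes_R I$. It therefore suffices to show that $(\Lambda\otimes_R I)\otimes_{RG}\mathfrak{P}\cong I\otimes_{RG}(\mathfrak{P}\otimes_R\Lambda)$ is acyclic. But $\mathfrak{P}\otimes_R\Lambda$ is an acyclic complex of projective $RG$-modules whose syzygies $M_i\otimes_R\Lambda$ ($i\ge n$) and $M_n\otimes_R L^{\otimes j}\otimes_R\Lambda$ ($j\ge 1$) are all $RG$-projective, hence it is contractible, and so remains acyclic after applying $I\otimes_{RG}\_\!\_$. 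That is the step you were looking for; it uses $\Lambda$ a second time, not just to build the coresolution but to absorb the injective $I$.
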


\begin{proof} Let $\textrm{pd}_{RG} \Lambda =n$ and $M$ be an $R$-projective $RG$-module. We consider an $RG$-projective resolution $$\textbf{P}=\cdots \rightarrow P_2 \rightarrow P_1 \rightarrow P_0 \rightarrow M \rightarrow 0$$ of $M$ and set $M_i=\textrm{Im}(P_i \rightarrow P_{i-1})$, $i\geq 0$, where $M_0=M$, the corresponding syzygy modules. Since every projective $RG$-module is PGF, it suffices to prove that the $RG$-module $M_n$ is PGF. Indeed, the $RG$-exact sequence $$0\rightarrow M_n \rightarrow P_{n-1} \rightarrow \cdots \rightarrow P_0 \rightarrow M \rightarrow 0,$$ will then be a PGF resolution of $M$ of length $n$. As $M$ is $R$-projective, the restricted exact sequence $\textbf{P}$ is $R$-split implying the existence of the induced $RG$-exact sequence $$\textbf{P} \otimes_R \Lambda = \cdots \rightarrow P_1\otimes_R \Lambda \rightarrow P_0\otimes_R \Lambda \rightarrow M\otimes_R \Lambda \rightarrow 0$$ with diagonal action. Since the $RG$-module $\Lambda$ is $R$-projective, the exact sequence $\textbf{P} \otimes_R \Lambda$ is a projective resolution of the $RG$-module $M\otimes_R \Lambda$ and $\textrm{pd}_{RG}(M\otimes_R \Lambda)\leq n$ (see Remark \ref{rem1}). Moreover, the corresponding $i$-th syzygies are the modules $M_i \otimes_R \Lambda$, $i\geq 0$. Thus, the $RG$-module $M_n \otimes_R \Lambda$ is projective, and a similar argument implies that the diagonal $RG$-module $M_n \otimes_R N \otimes_R \Lambda$ is also projective for every $R$-projective $RG$-module $N$. Let $L=\textrm{Coker}\iota$ and consider the $R$-split short exact sequence of $RG$-modules $0\rightarrow R \xrightarrow{\iota} \Lambda \rightarrow L \rightarrow 0$. Then, for every $j\geq 0$, we obtain a short exact sequence of $RG$-modules of the form $$0\rightarrow M_n \otimes_R \otimes L^{\otimes j} \rightarrow M_n \otimes_R L^{\otimes j} \otimes_R \Lambda \rightarrow M_n \otimes_R \otimes L^{\otimes j+1}\rightarrow 0,$$ where we denote by $L^{\otimes j}$ the $j$-th tensor power of $L$ over $R$. Since the $RG$-module $L$ is $R$-projective, we obtain that the $RG$-module $L^{\otimes j}$ is also $R$-projective for every $j\geq 0$. Thus, the diagonal $RG$-modules $M_n \otimes_R L^{\otimes j}\otimes_R \Lambda$ are projective for every $j\geq 0$. The splicing of the above short exact sequences yields the exact sequence $$0\rightarrow M_n \xrightarrow{\eta} M_n \otimes_R \Lambda \rightarrow M_n \otimes_R L\otimes_R \Lambda \rightarrow M_n \otimes_R L^{\otimes 2}\otimes_R \Lambda \rightarrow \cdots .$$ Splicing now the latter exact sequence with the projective resolution $$\cdots \rightarrow P_{n+2}\rightarrow P_{n+1}\rightarrow P_n \xrightarrow{\epsilon} M_n \rightarrow 0$$ of $M_n$, we obtain an acyclic complex of projective $RG$-modules
	$$\mathfrak{P}=\cdots \rightarrow P_{n+2}\rightarrow P_{n+1}\rightarrow P_n \xrightarrow{\eta \epsilon}  M_n \otimes_R \Lambda \rightarrow M_n \otimes_R L\otimes_R \Lambda \rightarrow M_n \otimes_R L^{\otimes 2}\otimes_R \Lambda \rightarrow \cdots ,$$ which has syzygies the $RG$-modules $(M_i)_{i\geq n}$ and $(M_n \otimes_R L^{\otimes j})_{j\geq 1}$. In order to prove that the $RG$-module $M_n$ is PGF, it suffices to show that the complex $ I\otimes_{RG} \mathfrak{P}$ is acyclic for every injective $RG$-module $I$. Let $I$ be an injective $RG$-module. Then, the $R$-split short exact sequence of $RG$-modules $0\rightarrow R \xrightarrow{\iota} \Lambda \rightarrow L \rightarrow 0$ induces the short exact sequence of $RG$-modules (with diagonal action) $0\rightarrow I \rightarrow \Lambda\otimes_R I \rightarrow L\otimes_R I \rightarrow 0$. The injectivity of the $RG$-module $I$ implies that the latter short exact sequence is $RG$-split. Thus, it suffices to show the acyclicity of the complex $(\Lambda\otimes_R I)\otimes_{RG}\mathfrak{P}$. Since the $RG$-module $\Lambda$ is $R$-projective, the complex of $RG$-modules (with diagonal action) $\mathfrak{P}\otimes_R \Lambda$ is acyclic with syzygies the projective $RG$-modules $(M_i \otimes_R \Lambda)_{i\geq n}$ and $(M_n \otimes_R L^{\otimes j} \otimes_R \Lambda)_{j\geq 1}$. Thus, the complex $\mathfrak{P}\otimes_R \Lambda$ is contractible and hence the complex $(\mathfrak{P}\otimes_R \Lambda)\otimes_{RG}I\cong (\Lambda\otimes_R I)\otimes_{RG}\mathfrak{P}$ is acyclic. 
\end{proof}

\begin{Corollary}\label{cor2}Let $R$ be a commutative ring and $G$ be a group such that there exists a characteristic module $\Lambda$ for $G$ over $R$. Then, ${\widetilde{\textrm{Gcd}}_{R}G}\leq \textrm{pd}_{RG} \Lambda$.
\end{Corollary}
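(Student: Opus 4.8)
The plan is to apply Proposition \ref{prop1} directly to the trivial module. Recall that ${\widetilde{\textrm{Gcd}}_{R}G}$ is by definition the PGF dimension $\textrm{PGF-dim}_{RG}R$ of the trivial $RG$-module $R$. The only point to observe is that the trivial $RG$-module $R$ is $R$-projective: as an $R$-module it is free of rank one. Hence $R$ qualifies as one of the modules $M$ to which Proposition \ref{prop1} applies.

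More precisely, first I would note that the hypotheses of the present Corollary are word for word the hypotheses of Proposition \ref{prop1}: we are given an $R$-split monomorphism $\iota:R\rightarrow \Lambda$ of $RG$-modules with $\Lambda$ an $R$-projective $RG$-module and $\textrm{pd}_{RG}\Lambda<\infty$. Taking $M=R$ (the trivial $RG$-module), which is $R$-projective as just remarked, Proposition \ref{prop1} yields $\textrm{PGF-dim}_{RG}R\leq \textrm{pd}_{RG}\Lambda$. Rewriting the left-hand side via the definition ${\widetilde{\textrm{Gcd}}_{R}G}=\textrm{PGF-dim}_{RG}R$ gives exactly the claimed inequality ${\widetilde{\textrm{Gcd}}_{R}G}\leq \textrm{pd}_{RG}\Lambda$.

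There is no real obstacle here; the statement is a formal specialization of Proposition \ref{prop1}, and all of the substantive work (constructing the acyclic complex of projectives $\mathfrak{P}$ that stays acyclic after tensoring with injectives, via the tensor powers $M_n\otimes_R L^{\otimes j}\otimes_R\Lambda$ and the contractibility argument for $\mathfrak{P}\otimes_R\Lambda$) has already been carried out there. If one wished, one could instead observe that together with Lemma \ref{lem1}-type considerations this makes the converse of Corollary \ref{cor1} hold, but for the present statement it suffices to invoke Proposition \ref{prop1} with $M=R$.
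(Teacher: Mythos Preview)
Your proposal is correct and matches the paper's intent: the corollary is stated without proof precisely because it is the specialization of Proposition~\ref{prop1} to the trivial $RG$-module $M=R$, which is $R$-free of rank one and hence $R$-projective. There is nothing to add.
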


 A characteristic module for G over $R$ may not always exist and, if it exists, it is certainly not unique. However, the projective dimension of any characteristic module for
$G$ over $R$ is uniquely determined by the pair $(R, G)$.

\begin{Corollary}\label{cor37}Let $R$ be a commutative ring and $G$ be a group. Then:
	\begin{itemize}
		\item[(i)]If $\Lambda, \Lambda'$ are two characteristic modules for $G$ over $R$, then $\textrm{pd}_{RG}\Lambda =\textrm{pd}_{RG}\Lambda '$.
		\item[(ii)]If there exists a characteristic module $\Lambda$ for $G$ over $R$, then $G$ has finite PGF dimension over $R$ and ${\widetilde{\textrm{Gcd}}_{R}G}\leq \textrm{pd}_{RG} \Lambda$.
		\item[(iii)]If $\textrm{sfli}R <\infty$ and ${\widetilde{\textrm{Gcd}}_{R}G}<\infty$, then there exists a characteristic module $\Lambda$ for $G$ over $R$ with $\textrm{pd}_{RG} \Lambda={\widetilde{\textrm{Gcd}}_{R}G}$.
	\end{itemize}
\end{Corollary}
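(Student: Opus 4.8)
The plan is to derive \ref{cor37} directly from the three results that immediately precede it. For part (i), suppose $\Lambda$ and $\Lambda'$ are both characteristic modules for $G$ over $R$. By definition each is $R$-projective with finite $RG$-projective dimension and admits an $R$-split $RG$-monomorphism from $R$. Applying Proposition \ref{prop1} with the data of $\Lambda$ to the $R$-projective $RG$-module $M = \Lambda'$ gives $\textrm{pd}_{RG}\Lambda' = \textrm{PGF-dim}_{RG}\Lambda' \leq \textrm{pd}_{RG}\Lambda$, where the first equality holds because $\Lambda'$ has finite projective dimension, so its PGF dimension agrees with its projective dimension by \cite[Corollary 3.7(i)]{DE}. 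Reversing the roles of $\Lambda$ and $\Lambda'$ gives the opposite inequality, hence equality.

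For part (ii), the existence of a characteristic module $\Lambda$ is precisely the hypothesis of Corollary \ref{cor2}, which yields ${\widetilde{\textrm{Gcd}}_{R}G} \leq \textrm{pd}_{RG}\Lambda < \infty$ at once; in particular $G$ has finite PGF dimension over $R$. For part (iii), the hypotheses $\textrm{sfli}R < \infty$ and ${\widetilde{\textrm{Gcd}}_{R}G} < \infty$ are exactly those of Corollary \ref{cor1}, which produces an $R$-split $RG$-monomorphism $\iota : R \to \Lambda$ with $\Lambda$ an $R$-projective $RG$-module and $\textrm{pd}_{RG}\Lambda = {\widetilde{\textrm{Gcd}}_{R}G}$; since this projective dimension is finite, $\Lambda$ is by Definition \ref{def37} a characteristic module for $G$ over $R$, and it has the asserted projective dimension.

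I do not anticipate a serious obstacle here: the corollary is essentially a repackaging of Corollaries \ref{cor1} and \ref{cor2} together with a symmetry argument built on Proposition \ref{prop1}. The one point requiring a moment's care is the equality $\textrm{PGF-dim}_{RG}M = \textrm{pd}_{RG}M$ for $M$ of finite projective dimension, invoked in part (i) to convert the bound on PGF dimension supplied by Proposition \ref{prop1} into a bound on projective dimension; this is the content of \cite[Proposition 2.2, Corollary 3.7(i)]{DE} recalled in the Preliminaries, applied to $M = \Lambda$ and $M = \Lambda'$, both of which have finite projective dimension by hypothesis.
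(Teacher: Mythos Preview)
Your proposal is correct and follows essentially the same approach as the paper's proof: part (i) uses Proposition~\ref{prop1} together with the equality $\textrm{PGF-dim}_{RG}M=\textrm{pd}_{RG}M$ for modules of finite projective dimension (\cite[Corollary 3.7(i)]{DE}) and a symmetry argument, while parts (ii) and (iii) are direct restatements of Corollaries~\ref{cor2} and~\ref{cor1}, respectively. The only cosmetic difference is which of $\Lambda,\Lambda'$ you feed first into Proposition~\ref{prop1}, and since you then swap roles this is immaterial.
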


\begin{proof}(i) Let $\Lambda, \Lambda'$ be two characteristic modules for $G$ over $R$. Then $\textrm{pd}_{RG}\Lambda <\infty$ and hence $\textrm{PGF-dim}_{RG}\Lambda =\textrm{pd}_{RG}\Lambda$ (see \cite[Corollary 3.7(i)]{DE}). Proposition \ref{prop1} for the characteristic module $\Lambda'$ and the $R$-projective $RG$-module $\Lambda$ yields $\textrm{pd}_{RG}\Lambda=\textrm{PGF-dim}_{RG}\Lambda\leq \textrm{pd}_{RG}\Lambda '$. Reversing the roles of $\Lambda$ and $\Lambda '$, we obtain the inequality $\textrm{pd}_{RG}\Lambda'\leq \textrm{pd}_{RG}\Lambda$. We conclude that $\textrm{pd}_{RG}\Lambda=\textrm{pd}_{RG}\Lambda'$.

Assertions (ii) and (iii) are precisely the Corollaries \ref{cor2} and \ref{cor1} respectively.
\end{proof}

\begin{Corollary}\label{cor38}Let $R$ be a commutative ring such that $\textrm{sfli}R<\infty$ and $G$ a group. Then, $G$ has finite PGF dimension over $R$ if and only if there exists a characteristic module $\Lambda$ for $G$ over $R$. In that case, we have $\textrm{pd}_{RG} \Lambda={\widetilde{\textrm{Gcd}}_{R}G}$.
\end{Corollary}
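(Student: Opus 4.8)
The plan is to deduce the statement directly from Corollary \ref{cor37}, which already assembles the three ingredients we need: existence from a characteristic module (part (ii)), existence of a characteristic module from finiteness of ${\widetilde{\textrm{Gcd}}_{R}G}$ under the hypothesis $\textrm{sfli}R<\infty$ (part (iii)), and uniqueness of the projective dimension of a characteristic module (part (i)). Since the claim is a biconditional together with a dimension formula, I would organize the argument as the two implications followed by the equality.

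For the implication ``a characteristic module exists $\Rightarrow$ $G$ has finite PGF dimension over $R$'', I would simply quote Corollary \ref{cor37}(ii): if $\Lambda$ is a characteristic module for $G$ over $R$, then ${\widetilde{\textrm{Gcd}}_{R}G}\leq \textrm{pd}_{RG}\Lambda<\infty$. Note that this half does not even use the hypothesis $\textrm{sfli}R<\infty$. For the converse, suppose ${\widetilde{\textrm{Gcd}}_{R}G}<\infty$; this is where $\textrm{sfli}R<\infty$ enters, since Corollary \ref{cor37}(iii) then produces a characteristic module $\Lambda$ for $G$ over $R$ with $\textrm{pd}_{RG}\Lambda={\widetilde{\textrm{Gcd}}_{R}G}$. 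This simultaneously establishes existence and verifies the dimension formula for this particular $\Lambda$.

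It remains to upgrade the formula $\textrm{pd}_{RG}\Lambda={\widetilde{\textrm{Gcd}}_{R}G}$ from the specially constructed module of part (iii) to an arbitrary characteristic module $\Lambda$. Given any characteristic module $\Lambda$, part (ii) already gives ${\widetilde{\textrm{Gcd}}_{R}G}<\infty$, so part (iii) furnishes a characteristic module $\Lambda'$ with $\textrm{pd}_{RG}\Lambda'={\widetilde{\textrm{Gcd}}_{R}G}$; then Corollary \ref{cor37}(i) yields $\textrm{pd}_{RG}\Lambda=\textrm{pd}_{RG}\Lambda'={\widetilde{\textrm{Gcd}}_{R}G}$, as desired.

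I do not expect any genuine obstacle here: this is a formal consequence of results already in place, with the real content carried by Proposition \ref{prop1} and Corollary \ref{cor1}. The only point that requires a moment's care is the last step, namely recognizing that one must route through part (iii) to bridge from ``some characteristic module realizes the PGF dimension'' to ``every characteristic module does'', using the uniqueness of part (i).
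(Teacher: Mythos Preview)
Your proposal is correct and matches the paper's intended argument: the paper states Corollary \ref{cor38} without proof immediately after Corollary \ref{cor37}, treating it as a direct consequence of parts (i)--(iii) exactly as you outline. Your final paragraph, using (i) to pass from the particular characteristic module of (iii) to an arbitrary one, is precisely the point that makes the dimension formula hold for \emph{every} characteristic module.
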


\begin{Proposition}\label{prop2} Let $R$ be a commutative ring and $G$ be a group. Then, $$\textrm{silp}(RG)\leq \textrm{spli}(RG) \leq {\widetilde{\textrm{Gcd}}_{R}G} + \textrm{spli}R.$$
\end{Proposition}

\begin{proof}Since $RG\cong {(RG)}^{\textrm{op}}$, the left inequality is a direct consequence of \cite[Corollary 5.4]{DE}. For the right inequality, it suffices to assume that both ${\widetilde{\textrm{Gcd}}_{R}G}=n$ and $\textrm{spli}R=m$ are finite. Then, Corollary \ref{cor1} implies that there exists an $R$-split monomorphism of $RG$-modules $0\rightarrow R \rightarrow \Lambda$, where $\Lambda$ is an $R$-projective $RG$-module and $\textrm{pd}_{RG}\Lambda =n$. Let $I$ be an injective $RG$-module. It follows that there exists an induced monomorphism of $RG$ modules (with diagonal action) $0\rightarrow I \rightarrow \Lambda\otimes_{R}I$. The injectivity of $I$ yields that the latter monomorphism is $RG$-split and hence it suffices to prove that $\textrm{pd}_{RG}(\Lambda\otimes_{R}I)\leq n+m$. Indeed, let $P_*$ be a projective resolution of the $RG$-module $I$ and $I_m=\textrm{Im}(P_m \rightarrow P_{m-1})$ be the corresponding $m$-syzygy. Since $\textrm{spli}R=m$, we obtain an exact sequence of $RG$-modules $$0\rightarrow I_m \rightarrow P_{m-1} \rightarrow \cdots \rightarrow P_0 \rightarrow I \rightarrow 0$$ where $I_m,P_{m-1},\dots ,P_0$ are all projective as $R$-modules. The projectivity of the $R$-module $\Lambda$ yields an induced exact sequence of $RG$-modules with diagonal action $$0\rightarrow  \Lambda\otimes_R I_m  \rightarrow  \Lambda \otimes_R P_{m-1} \rightarrow \cdots \rightarrow  \Lambda\otimes_R P_0 \rightarrow \Lambda\otimes_R I \rightarrow 0.$$ Since $\textrm{pd}_{RG}(\Lambda\otimes_R K)\leq \textrm{pd}_{RG}\Lambda$ for every $R$-projective $RG$-module $K$ (see Remark \ref{rem1}), the exact sequence above implies that $\textrm{pd}_{RG}(\Lambda\otimes_{R}I)\leq n+m$.\end{proof}

	\begin{Proposition}\label{newpropara}Let $R$ be a commutative ring such that $\textrm{sfli}R<\infty$ and $G$ be a group. Then:
		\begin{itemize}
			\item[(i)]Every Gorenstein projective $RG$-module is Gorenstein projective as $R$-module.
			\item[(ii)]If $\textrm{Gcd}_{R}G<\infty$, then there exists a characteristic module $\Lambda$ for $G$ over $R$ and $\textrm{pd}_{RG}\Lambda =\textrm{Gcd}_{R}G$.
		\end{itemize}
	\end{Proposition}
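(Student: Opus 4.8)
The plan is to handle (i) by restriction of scalars, along the lines of Lemma \ref{lem1}, and to bootstrap (ii) from (i) together with a coresolution-type description of finite Gorenstein projective dimension.

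For (i): given a Gorenstein projective $RG$-module $M$, it occurs as a syzygy of a complete projective resolution $\textbf{P}$ of $RG$-modules. Restricting scalars along $R\hookrightarrow RG$ turns $\textbf{P}$ into an acyclic complex of projective $R$-modules, since projective $RG$-modules are $R$-projective and exactness is insensitive to the ground ring. As recorded in the proof of Lemma \ref{lem1}, the hypothesis $\textrm{sfli}R<\infty$ forces every acyclic complex of projective $R$-modules to remain acyclic after applying $I\otimes_R\_\!\_$ for each injective $R$-module $I$; hence $\textbf{P}$ already witnesses that $M$ is PGF as an $R$-module, and then $M$ is Gorenstein projective as an $R$-module by the inclusion ${\tt PGF}(R)\subseteq{\tt GProj}(R)$ of \cite[Theorem 4.4]{SS}. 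No assumption on $\textrm{sfli}RG$ is needed here (and none is available).

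For (ii): write $n=\textrm{Gcd}_RG=\textrm{Gpd}_{RG}R$. The first step is to invoke the Gorenstein projective counterpart of \cite[Theorem 3.4]{DE}, namely the fact from \cite{H1} that a module of finite Gorenstein projective dimension $n$ embeds into a module of projective dimension exactly $n$ with Gorenstein projective cokernel; for the trivial module this produces an exact sequence of $RG$-modules $0\to R\xrightarrow{\iota}\Lambda\to L\to 0$ with $\textrm{pd}_{RG}\Lambda=n$ and $L$ Gorenstein projective over $RG$. It then remains to see that $\Lambda$ is a characteristic module. By part (i), $L$ is Gorenstein projective as an $R$-module. Since $\textrm{pd}_RR=0$ and $\textrm{pd}_R\Lambda\leq\textrm{pd}_{RG}\Lambda=n<\infty$, the displayed sequence forces $\textrm{pd}_RL<\infty$; but a Gorenstein projective module of finite projective dimension is projective, so $L$ is $R$-projective. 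Consequently $\textrm{Ext}^1_R(L,R)=0$, the sequence is $R$-split, and $\Lambda\cong R\oplus L$ is $R$-projective. Thus $\Lambda$ is a characteristic module for $G$ over $R$, with $R$-split $RG$-monomorphism $\iota$ and $\textrm{pd}_{RG}\Lambda=n=\textrm{Gcd}_RG$, as required.

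Almost everything above is formal bookkeeping: restriction of scalars cannot raise projective dimension, and a module that is simultaneously Gorenstein projective and of finite projective dimension is projective. The one external input that must be located precisely is the coresolution $0\to R\to\Lambda\to L\to 0$ with $\textrm{pd}_{RG}\Lambda$ equal to $n$ rather than merely bounded by it---tracking down the exact statement in \cite{H1}, or rederiving it from the completeness of the cotorsion pair generated by the Gorenstein projective modules, is the step I expect to demand the most care. (I note in passing that combining this with Corollary \ref{cor37}(ii) and the trivial inequality $\textrm{Gcd}_RG\leq{\widetilde{\textrm{Gcd}}_{R}G}$ also yields $\textrm{Gcd}_RG={\widetilde{\textrm{Gcd}}_{R}G}$ whenever $\textrm{sfli}R<\infty$ and $\textrm{Gcd}_RG<\infty$.)
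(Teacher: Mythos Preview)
Your proposal is correct and follows essentially the same approach as the paper: part (i) is argued identically via restriction of scalars and $\textrm{sfli}R<\infty$, and part (ii) uses the same coresolution $0\to R\to\Lambda\to L\to 0$ from Holm's theory together with (i) to force $L$ to be $R$-projective and the sequence $R$-split. The only cosmetic difference is that the paper first establishes the $R$-splitting via $\textrm{Ext}^1_R(L,R)=0$ (citing \cite[Theorem 2.20]{Ho}) and then deduces $R$-projectivity of $L$, whereas you reverse that order; your closing parenthetical anticipates Proposition \ref{newprop}.
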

	
	\begin{proof}(i) Let $M$ be a Gorenstein projective $RG$-module. Then $M$ is a syzygy of a suitable acyclic complex of projective $RG$-modules \textbf{P}. Since every $RG$-projective module is also $R$-projective, viewing \textbf{P} as a complex of $R$-projective modules, the condition $\textrm{sfli}R<\infty$ implies that the $R$-module $M$ is PGF. Thus, $M$ is a Gorenstein projective $R$-module.
		
		(ii) The assumption of the finiteness of the Gorenstein cohomological dimension over $R$ yields the existence of a short exact sequence of $RG$-modules $$0\rightarrow R \rightarrow \Lambda \rightarrow L \rightarrow 0,$$ where $\textrm{pd}_{RG}\Lambda= \textrm{Gcd}_{R}G$ and $L$ is Gorenstein projective. Using (i) above, we obtain that $L$ is Gorenstein projective as $R$-module and hence $\textrm{Ext}^1_R(L,R)=0$ (see \cite[Theorem 2.20]{Ho}). Thus the short exact sequence $0\rightarrow R \rightarrow \Lambda \rightarrow L \rightarrow 0$ is $R$-split. Since $\textrm{pd}_{R}\Lambda\leq \textrm{pd}_{RG}\Lambda<\infty$, we have $\textrm{pd}_{R}L<\infty$. Moreover $L$ is Gorenstein projective as $R$-module. Hence $L$ is $R$-projective and $\Lambda$ is $R$-projective as well. We conclude that $\Lambda$ is a characteristic module for $G$ over $R$.
	\end{proof}
	
	\begin{Remark}\label{Remarkara} \rm We note that the condition $\textrm{sfli}R<\infty$ in Proposition \ref{newpropara}, relaxes the finiteness of the weak global dimension of $R$ in \cite[Corollary 1.3]{ET}.\end{Remark}
		
	
	
	\begin{Corollary}\label{corr215}Let $R$ be a commutative ring such that $\textrm{sfli}R<\infty$ and $G$ be a group. Then ${\textrm{Gcd}}_{R}G<\infty$ if and only if there exists a characteristic module $\Lambda$ for $G$ over $R$.
	\end{Corollary}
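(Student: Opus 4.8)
The plan is to deduce each implication from results already established in the excerpt, so that almost nothing new needs to be proved. For the direction ``$\textrm{Gcd}_{R}G<\infty \Rightarrow$ a characteristic module exists'', I would simply appeal to Proposition \ref{newpropara}(ii): under the standing hypothesis $\textrm{sfli}R<\infty$, the finiteness of $\textrm{Gcd}_{R}G$ already produces a characteristic module $\Lambda$ for $G$ over $R$, with the sharper conclusion $\textrm{pd}_{RG}\Lambda=\textrm{Gcd}_{R}G$. So there is genuinely nothing to do in this direction beyond citing that proposition.

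For the converse, suppose a characteristic module $\Lambda$ for $G$ over $R$ exists. By Corollary \ref{cor38} this is equivalent to ${\widetilde{\textrm{Gcd}}_{R}G}<\infty$ (alternatively, Corollary \ref{cor2}, i.e.\ Corollary \ref{cor37}(ii), gives directly ${\widetilde{\textrm{Gcd}}_{R}G}\leq\textrm{pd}_{RG}\Lambda<\infty$), so the trivial $RG$-module $R$ admits a finite PGF resolution as an $RG$-module. I would then invoke the inclusion ${\tt PGF}(RG)\subseteq{\tt GProj}(RG)$ recorded in the preliminaries (from \cite[Theorem 4.4]{SS}): a PGF resolution of $R$ is in particular a Gorenstein projective resolution, whence $\textrm{Gcd}_{R}G=\textrm{Gpd}_{RG}R\leq{\widetilde{\textrm{Gcd}}_{R}G}<\infty$. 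Note that this half of the argument does not even require $\textrm{sfli}R<\infty$.

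The only point deserving a word of care — and it is not really an obstacle — is the general inequality $\textrm{Gpd}_{RG}M\leq\textrm{PGF-dim}_{RG}M$, which follows at once from the class inclusion ${\tt PGF}(RG)\subseteq{\tt GProj}(RG)$ together with the definitions of the two relative dimensions. Combining the two directions with Corollary \ref{cor38} and Corollary \ref{cor37}(i), one can in fact record the stronger assertion that, whenever these equivalent finiteness conditions hold, $\textrm{Gcd}_{R}G={\widetilde{\textrm{Gcd}}_{R}G}=\textrm{pd}_{RG}\Lambda$ for any characteristic module $\Lambda$ for $G$ over $R$; I would add this as a closing remark, since it costs nothing and parallels Corollary \ref{cor38}.
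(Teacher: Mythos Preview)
Your proof is correct. For the forward direction you use Proposition \ref{newpropara}(ii), exactly as the paper does. For the converse the paper simply cites the external reference \cite[Proposition 1.4]{ET}, whereas you argue internally: from Corollary \ref{cor2} you get ${\widetilde{\textrm{Gcd}}_{R}G}\leq\textrm{pd}_{RG}\Lambda<\infty$, and then the inclusion ${\tt PGF}(RG)\subseteq{\tt GProj}(RG)$ gives $\textrm{Gcd}_{R}G\leq{\widetilde{\textrm{Gcd}}_{R}G}<\infty$. Your route is more self-contained within the present paper and, as you correctly observe, shows that this implication holds without the hypothesis $\textrm{sfli}R<\infty$. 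The closing equality $\textrm{Gcd}_{R}G={\widetilde{\textrm{Gcd}}_{R}G}=\textrm{pd}_{RG}\Lambda$ that you add is precisely the content of the subsequent Proposition \ref{newprop}, so you are in effect anticipating that result; this is harmless but worth noting.
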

	
	\begin{proof}This follows from Proposition \ref{newpropara}(ii) and \cite[Proposition 1.4]{ET}.
	\end{proof}

	\begin{Proposition}\label{newprop}Let $G$ be a group and $R$ be a commutative ring such that $\textrm{sfli}R<\infty$. Then ${\textrm{Gcd}}_{R}G<\infty$ if and only if ${\widetilde{\textrm{Gcd}}_{R}G}<\infty$. In that case, we have ${\textrm{Gcd}}_{R}G={\widetilde{\textrm{Gcd}}_{R}G}$.
	\end{Proposition}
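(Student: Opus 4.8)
The plan is to sandwich ${\widetilde{\textrm{Gcd}}_{R}G}$ between $\textrm{Gcd}_{R}G$ and itself: one of the two needed inequalities holds over any commutative ring, and the other follows from the hypothesis $\textrm{sfli}R<\infty$ via the characteristic-module machinery developed above. The equivalence of the two finiteness conditions and the asserted equality then fall out at once.

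First I would record the unconditional inequality $\textrm{Gcd}_{R}G\leq{\widetilde{\textrm{Gcd}}_{R}G}$. By \cite[Theorem 4.4]{SS} every PGF $RG$-module is Gorenstein projective, so if ${\widetilde{\textrm{Gcd}}_{R}G}=n<\infty$, a PGF resolution $0\rightarrow G_n\rightarrow\cdots\rightarrow G_0\rightarrow R\rightarrow 0$ of the trivial $RG$-module is, in particular, a Gorenstein projective resolution of length $n$; hence $\textrm{Gpd}_{RG}R\leq\textrm{PGF-dim}_{RG}R$, i.e. $\textrm{Gcd}_{R}G\leq{\widetilde{\textrm{Gcd}}_{R}G}$. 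In particular, ${\widetilde{\textrm{Gcd}}_{R}G}<\infty$ implies $\textrm{Gcd}_{R}G<\infty$.

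For the reverse direction, assume $\textrm{Gcd}_{R}G<\infty$. Since $\textrm{sfli}R<\infty$, Proposition \ref{newpropara}(ii) yields a characteristic module $\Lambda$ for $G$ over $R$ with $\textrm{pd}_{RG}\Lambda=\textrm{Gcd}_{R}G$. Corollary \ref{cor37}(ii) (equivalently Corollary \ref{cor2}) then gives ${\widetilde{\textrm{Gcd}}_{R}G}\leq\textrm{pd}_{RG}\Lambda=\textrm{Gcd}_{R}G<\infty$, and combining this with the inequality of the previous paragraph we conclude ${\widetilde{\textrm{Gcd}}_{R}G}=\textrm{Gcd}_{R}G$. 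Alternatively, Corollary \ref{cor38} and Corollary \ref{corr215} already show that, under $\textrm{sfli}R<\infty$, each of the two dimensions is finite exactly when a characteristic module for $G$ over $R$ exists, and Corollary \ref{cor37}(i) forces both to equal $\textrm{pd}_{RG}\Lambda$ in that case.

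Every ingredient has been assembled earlier, so no genuine obstacle remains; the only point worth spelling out carefully is the unconditional bound $\textrm{Gpd}_{RG}R\leq\textrm{PGF-dim}_{RG}R$, which is immediate from the inclusion ${\tt PGF}(RG)\subseteq{\tt GProj}(RG)$, and then the role of $\textrm{sfli}R<\infty$ is confined entirely to invoking Proposition \ref{newpropara}(ii).
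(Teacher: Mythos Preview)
Your proof is correct. Both your argument and the paper's rest on Proposition \ref{newpropara}(ii) to produce, under $\textrm{sfli}R<\infty$, a characteristic module $\Lambda$ with $\textrm{pd}_{RG}\Lambda=\textrm{Gcd}_{R}G$. The minor difference is how the equality is pinned down: the paper invokes Corollary \ref{cor1} to obtain a second characteristic module $\Lambda'$ with $\textrm{pd}_{RG}\Lambda'={\widetilde{\textrm{Gcd}}_{R}G}$ and then appeals to Corollary \ref{cor37}(i) to identify the two projective dimensions, whereas your primary route uses the elementary inequality $\textrm{Gcd}_{R}G\leq{\widetilde{\textrm{Gcd}}_{R}G}$ (immediate from ${\tt PGF}(RG)\subseteq{\tt GProj}(RG)$) together with ${\widetilde{\textrm{Gcd}}_{R}G}\leq\textrm{pd}_{RG}\Lambda$ from Corollary \ref{cor2}. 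Your approach is marginally more economical, since it avoids both Corollary \ref{cor1} and Corollary \ref{cor37}(i); the paper's route, on the other hand, makes explicit that both dimensions coincide with the common projective dimension of \emph{any} characteristic module. You already note this alternative yourself, so the two arguments are essentially interchangeable.
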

	
	\begin{proof}The equivalence follows from Corollary \ref{cor38} and Corollary \ref{corr215}. Invoking Proposition \ref{newpropara}(ii), the finiteness of ${\textrm{Gcd}}_{R}G$ yields the existence of a characteristic module $\Lambda$ for $G$ over $R$ such that $\textrm{pd}_{RG}\Lambda = \textrm{Gcd}_{R}G$. Moreover, Corollary \ref{cor1} yields the existence of a characteristic module $\Lambda'$ for $G$ over $R$ such that $\textrm{pd}_{RG}\Lambda' = \widetilde{\textrm{Gcd}}_{R}G$. Then, Corollary \ref{cor37}(i) implies that $\textrm{pd}_{RG}\Lambda = \textrm{pd}_{RG}\Lambda'$, and hence we have $\textrm{Gcd}_{R}G=\widetilde{\textrm{Gcd}}_{R}G$.
	\end{proof}
	
	We now restate our first main theorem (Theorem 0.1) and deduce it from our previous results. 

\begin{Theorem}\label{theorr}
	Let $G$ be a group and $R$ be a commutative ring such that $\textrm{spli}R<\infty$. The following are equivalent:
	\begin{itemize}
		\item[(i)] ${\widetilde{\textrm{Gcd}}_{R}G}<\infty$.	
		\item[(ii)]${\textrm{Gcd}}_{R}G<\infty$.
		\item[(iii)] There exists a characteristic module $\Lambda$ for $G$ over $R$.
		\item[(iv)] Every $RG$-module has finite PGF dimension.
		\item[(v)] Every $RG$-module has finite Gorenstein projective dimension.
		\item[(vi)] $\textrm{silp}(RG)=\textrm{spli}(RG)< \infty$.
	\end{itemize}
	In this case, for every $RG$-module $M$ we have $\textrm{PGF-dim}_{RG}M=\textrm{Gpd}_{RG}M$.
\end{Theorem}

\begin{proof}
	Since $\textrm{spli}R<\infty$, by Proposition \ref{newprop} we have $(i)\Leftrightarrow(ii)$. Moreover, the equivalence $(i)\Leftrightarrow(iii)$ follows from Corollary \ref{cor1} and Corollary \ref{cor2}. The implication $(iv)\Rightarrow (v)$ is clear since the PGF dimension bounds the Gorenstein projective dimension, while the implication $(v)\Rightarrow (vi)$ is a consequence of \cite[Theorem 4.1]{Emm3}. Moreover, the equivalence $(iv)\Leftrightarrow(vi)$ follows from \cite[Theorem 5.1]{DE}, since $RG \cong {(RG)}^{\textrm{op}}$. Finally, invoking Proposition \ref{prop2} and \cite[Corollary 5.4]{DE}, we obtain the implication $(i)\Rightarrow (vi)$, while the implication $(iv)\Rightarrow (i)$ is trivial.
	
	If $\textrm{spli}(RG)<\infty$, then we also have $\textrm{sfli}(RG)<\infty$ and hence every syzygy of an complex of projective $RG$-modules is a PGF $RG$-module. Therefore, every Gorenstein projective $RG$-module is PGF. Invoking \cite[Theorem 4.4]{SS}, we infer that in this case the classes ${\tt GProj}(RG)$ and ${\tt PGF}(RG)$ are equal, and hence $\textrm{PGF-dim}_{RG}M=\textrm{Gpd}_{RG}M$ for every $RG$-module $M$.
\end{proof}

\subsection{Gorenstein global dimensions}The goal of this subsection is to give nice estimations for the invariant $\textrm{spli}(RG)$, in terms of the PGF dimension of the group $G$ and the invariant $\textrm{spli}R$. These estimations generalize also the upper bound for the invariants $\textrm{silp}(RG)$ and $\textrm{spli}(RG)$ given in \cite[Corollary 1.6]{ET}. By doing this, we provide estimations for the global dimension $\textrm{PGF-gl.dim}(RG)=\textrm{Ggl.dim}(RG)$. We provide also a subadditivity result for $\textrm{spli}(RG)$ under group extension, where the commutative ring $R$ is of finite Gorenstein global dimension.

Ikenaga defined in \cite{In} the generalized cohomological dimension $\underline{\textrm{cd}} G$ of a group $G$ over $\mathbb{Z}$ as $\underline{\textrm{cd}} G=\textrm{sup}\{i\in\mathbb{N}\,|\,\textrm{Ext}^i_{\mathbb{Z}G}(M,P)\neq 0,\, M \,\mathbb{Z}\textrm{-free},\, P \,\mathbb{Z}G\textrm{-projective}\}$. Here we naturally define the generalized cohomological dimension of a group $G$ over any commutative ring $R$ as $$\underline{\textrm{cd}}_R G=\textrm{sup}\{i\in\mathbb{N}\,|\,\textrm{Ext}^i_{RG}(M,P)\neq 0,\, M \,R\textrm{-projective},\, P \,RG\textrm{-projective}\}.$$ We note that $\underline{\textrm{cd}} G=\underline{\textrm{cd}}_{\mathbb{Z}} G$.

\begin{Proposition}\label{prop413}
	Let $R$ be a commutative ring and $G$ be a group such that ${\widetilde{\textrm{Gcd}}_{R}G}<\infty$. Then, ${\widetilde{\textrm{Gcd}}_{R}G}\leq \underline{\textrm{cd}}_R G$. Moreover, if $\textrm{spli}R<\infty$, then  ${\widetilde{\textrm{Gcd}}_{R}G}=\underline{\textrm{cd}}_R G$.
\end{Proposition}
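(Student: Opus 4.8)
The plan is to establish the two inequalities separately: the inequality $\widetilde{\textrm{Gcd}}_R G \le \underline{\textrm{cd}}_R G$ under the standing hypothesis $\widetilde{\textrm{Gcd}}_R G<\infty$, and the reverse inequality $\underline{\textrm{cd}}_R G \le \widetilde{\textrm{Gcd}}_R G$ under the additional hypothesis $\textrm{spli}R<\infty$; combining them gives the equality.

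For the first inequality, set $n=\widetilde{\textrm{Gcd}}_R G<\infty$. The case $n=0$ is immediate, since $\underline{\textrm{cd}}_R G\ge 0$: the module $RG$ is both $R$-projective and $RG$-projective and $\textrm{Hom}_{RG}(RG,RG)\ne 0$. So assume $n\ge 1$. First I would apply \cite[Theorem 3.4]{DE} to the trivial module $R$ --- this is the same step used at the beginning of the proof of Proposition~\ref{Prop}, and it requires no hypothesis on $\textrm{sfli}R$ --- to obtain a short exact sequence of $RG$-modules $0\to R\to\Lambda\to L\to 0$ with $L$ a PGF $RG$-module and $\textrm{pd}_{RG}\Lambda=n$. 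Because $L$ is PGF, it is Gorenstein projective by \cite[Theorem~4.4]{SS}, so $\textrm{Ext}^i_{RG}(L,P)=0$ for every $i\ge 1$ and every projective $RG$-module $P$ (see \cite[Theorem~2.20]{Ho}). Feeding this into the long exact sequence obtained by applying $\textrm{Hom}_{RG}(-,P)$ to $0\to R\to\Lambda\to L\to 0$ yields isomorphisms $\textrm{Ext}^i_{RG}(\Lambda,P)\cong \textrm{Ext}^i_{RG}(R,P)$ for all $i\ge 1$. Finally, since $\textrm{pd}_{RG}\Lambda=n\ge 1$, a classical property of projective dimension produces a projective $RG$-module $P$ with $\textrm{Ext}^n_{RG}(\Lambda,P)\ne 0$: the $(n-1)$-st syzygy of $\Lambda$ in a projective resolution is non-projective, so the corresponding one-step syzygy sequence is non-split, hence detected by a nonzero class in $\textrm{Ext}^1$, which shifts up to degree $n$. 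Therefore $\textrm{Ext}^n_{RG}(R,P)\ne 0$ with $R$ an $R$-projective $RG$-module and $P$ an $RG$-projective module, so $\underline{\textrm{cd}}_R G\ge n=\widetilde{\textrm{Gcd}}_R G$.

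For the reverse inequality, assume also $\textrm{spli}R<\infty$; then $\textrm{sfli}R\le\textrm{spli}R<\infty$, so by Corollary~\ref{cor1} there is a characteristic module $\Lambda$ for $G$ over $R$ with $\textrm{pd}_{RG}\Lambda=\widetilde{\textrm{Gcd}}_R G=:n$. Let $M$ be any $R$-projective $RG$-module and $P$ any projective $RG$-module. Proposition~\ref{prop1} gives $\textrm{PGF-dim}_{RG}M\le n$, so $M$ admits a PGF resolution $0\to G_n\to\cdots\to G_0\to M\to 0$. Each $G_j$ is PGF, hence Gorenstein projective, so $\textrm{Ext}^i_{RG}(G_j,P)=0$ for all $i\ge 1$; a standard dimension-shifting argument through this resolution then yields $\textrm{Ext}^i_{RG}(M,P)=0$ for every $i>n$. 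Since $M$ and $P$ were arbitrary, $\underline{\textrm{cd}}_R G\le n=\widetilde{\textrm{Gcd}}_R G$, and together with the first part this gives the asserted equality.

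I expect the only real difficulty to lie in the first inequality, namely in exhibiting a single pair of modules that realizes the value $n$ exactly. The device that makes this work is to replace the trivial module $R$ --- whose cohomology into projectives is hard to control directly --- by the module $\Lambda$ of genuine finite projective dimension supplied by \cite[Theorem~3.4]{DE}: the two modules have the same $\textrm{Ext}^{\ge 1}_{RG}(-,P)$ into projectives because the cokernel $L$ is PGF, and for $\Lambda$ the non-vanishing of $\textrm{Ext}^n_{RG}(\Lambda,-)$ into some projective is classical. The remaining points --- the separate treatment of $n=0$ and the bookkeeping that all the dimension shifts land in the admissible degree range --- are routine.
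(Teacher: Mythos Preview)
Your proof is correct. The overall strategy matches the paper's --- show $\widetilde{\textrm{Gcd}}_R G \le \underline{\textrm{cd}}_R G$ by exhibiting a projective $P$ with $\textrm{Ext}^n_{RG}(R,P)\neq 0$, and then bound $\underline{\textrm{cd}}_R G$ above by bounding $\textrm{PGF-dim}_{RG}M$ for every $R$-projective $M$ --- but the routes to the two intermediate facts differ.

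For the first inequality, the paper simply invokes \cite[Proposition~3.6]{DE}, which gives directly that $\widetilde{\textrm{Gcd}}_R G = \sup\{i : \textrm{Ext}^i_{RG}(R,P)\neq 0,\ P \text{ projective}\}$ whenever the left side is finite. Your argument via the short exact sequence $0\to R\to\Lambda\to L\to 0$ from \cite[Theorem~3.4]{DE} is essentially an unpacking of that citation for the case at hand; it is correct and perhaps more transparent, at the cost of the separate $n=0$ case. For the second inequality, the paper passes through Theorem~\ref{theorr} (to guarantee every $RG$-module has finite PGF dimension) and then Proposition~\ref{prop64}, whereas you go straight from Corollary~\ref{cor1} to Proposition~\ref{prop1}. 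Your route is slightly more economical: Proposition~\ref{prop1} already gives $\textrm{PGF-dim}_{RG}M\le n$ for $R$-projective $M$ without invoking the global finiteness statement of Theorem~\ref{theorr}, and the $\textrm{Ext}$-vanishing then follows by the dimension shift you describe (which is again the content of \cite[Proposition~3.6]{DE}).
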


\begin{proof}Since ${\widetilde{\textrm{Gcd}}_{R}G}=\textrm{PGF-dim}_{RG}R$ is finite, \cite[Proposition 3.6]{DE} implies that $${\widetilde{\textrm{Gcd}}_{R}G}=\textrm{sup}\{i\in\mathbb{N}\,|\,\textrm{Ext}^i_{RG}(R,P)\neq 0,\,P \,RG\textrm{-projective}\}.$$ Then, by the definition of $\underline{\textrm{cd}}_R G$ we have ${\widetilde{\textrm{Gcd}}_{R}G}\leq \underline{\textrm{cd}}_R G$. Moreover, if $\textrm{spli}R$ is finite, Theorem 2.23 implies that $\textrm{PGF-dim}_{RG}M<\infty$ for every $RG$-module $M$, and hence $\textrm{PGF-dim}_{RG}M =\textrm{sup}\{i\in\mathbb{N}\,|\,\textrm{Ext}^i_{RG}(M,P)\neq 0,\,P \,RG\textrm{-projective}\}$. Using again Theorem 2.23, we infer that there exists a characteristic module $\Lambda$ for $G$ which is such that ${\widetilde{\textrm{Gcd}}_{R}G}=\textrm{pd}_{RG}\Lambda$ (see Corollary 2.9). Then, Proposition 2.5 implies that $\textrm{PGF-dim}_{RG}M \leq \textrm{pd}_{RG}\Lambda = {\widetilde{\textrm{Gcd}}_{R}G}$ for every $R$-projective $RG$-module $M$. Consequently, we have $\underline{\textrm{cd}}_R G=\textrm{sup}\{\textrm{PGF-dim}_{RG}M\,|\, M \, R\textrm{-projective}\}$ $\leq {\widetilde{\textrm{Gcd}}_{R}G}$, and hence ${\widetilde{\textrm{Gcd}}_{R}G}=\underline{\textrm{cd}}_R G$.
\end{proof}

\begin{Lemma}\label{prop414}Let $R$ be a commutative ring and $G$ be a group. Then, $$\underline{\textrm{cd}}_R G\leq \textrm{silp}(RG) \leq \textrm{spli}(RG).$$\end{Lemma}

\begin{proof}For the first inequality, it suffices to assume that $\textrm{silp}(RG)=n<\infty$. Then, for every projective $RG$-module $P$ we have $\textrm{id}_{RG}P\leq n$ and hence $\textrm{Ext}^i_{RG}(M,P)=0$, for every $RG$-module $M$ and every $i>n$. It follows that $\underline{\textrm{cd}}_R G\leq n$, as needed. The second inequality follows from \cite[Corollary 5.4]{DE}, since $RG\cong {(RG)}^{\textrm{op}}$. \end{proof}

\begin{Proposition}\label{prop218}Let $R$ be a commutative ring and $G$ be a group. Then, $$\textrm{max}\{\textrm{spli}R, {\widetilde{\textrm{Gcd}}_{R}G}\}\leq \textrm{spli}(RG).$$
\end{Proposition}

\begin{proof}It suffices to assume that $\textrm{spli}(RG)=n$ is finite. We will show that $\textrm{spli}R\leq \textrm{spli}(RG)$ and ${\widetilde{\textrm{Gcd}}_{R}G}\leq \textrm{spli}(RG)$. The first inequality follows from Lemma \ref{lemmm22} for $H=\{1\}$. It remains to prove that ${\widetilde{\textrm{Gcd}}_{R}G}\leq \textrm{spli}(RG)$. Since $\textrm{spli}(RG)<\infty$ and $\textrm{spli}R<\infty$, invoking Theorem \ref{theorr}, we infer that ${\widetilde{\textrm{Gcd}}_{R}G}<\infty$. Therefore, Proposition \ref{prop413} yields ${\widetilde{\textrm{Gcd}}_{R}G}=\underline{\textrm{cd}}_R G$. Using Lemma \ref{prop414}, we conclude that ${\widetilde{\textrm{Gcd}}_{R}G}\leq \textrm{spli}(RG)$, as needed.\end{proof}

\begin{Corollary}\label{COR}Let $R$ be a commutative ring and $G$ be a group. Then, \begin{itemize}
	\item[(i)]$\textrm{max}\{\textrm{spli}R, {\widetilde{\textrm{Gcd}}_{R}G}\}\leq \textrm{spli}(RG)\leq {\widetilde{\textrm{Gcd}}_{R}G} + \textrm{spli}R$,
	\item[(ii)]$\textrm{max}\{\textrm{spli}R, {\textrm{Gcd}}_{R}G\}\leq \textrm{spli}(RG)\leq {\textrm{Gcd}}_{R}G + \textrm{spli}R.$
\end{itemize}
\end{Corollary}

\begin{proof}(i) This follows immediately from Proposition \ref{prop2} and Proposition \ref{prop218}.
	
	(ii) It suffices to assume that ${\textrm{Gcd}}_{R}G<\infty$ and $\textrm{spli}R<\infty$. Then, the inequalities follow from (i) above and Proposition \ref{newprop}.
\end{proof}

\begin{Corollary}\label{corrr}
	Let $R$ be a commutative ring and $G$ be a group. Then, $$ \textrm{max}\{\textrm{PGF-gl.dim}R, {\widetilde{\textrm{Gcd}}_{R}G}\}\leq\textrm{PGF-gl.dim}(RG) \leq {\widetilde{\textrm{Gcd}}_{R}G} + \textrm{PGF-gl.dim}R.$$
\end{Corollary}

\begin{proof}It suffices to assume that $\textrm{PGF-gl.dim}R<\infty$ and $\widetilde{\textrm{Gcd}}_{R}G<\infty$. Invoking \cite[Theorem 5.1]{DE}, we infer that $\textrm{spli}R=\textrm{PGF-gl.dim}R<\infty$. Moreover, Corollary \ref{COR}(i) yields the finiteness of $\textrm{spli}(RG)$. Since $RG\cong {(RG)^{\textrm{op}}}$ and $\textrm{sfli}(RG)\leq \textrm{spli}(RG)<\infty$, using again \cite[Theorem 5.1]{DE}, we obtain that $\textrm{PGF-gl.dim}(RG)=\textrm{spli}(RG)<\infty$. The inequalities now follow from Corollary \ref{COR}(i).
\end{proof}

\begin{Corollary}Let $R$ be a commutative ring and $G$ be a group. Then, $$\textrm{max}\{\textrm{Ggl.dim}R, \textrm{Gcd}_{R}G\} \leq\textrm{Ggl.dim}(RG) \leq {{\textrm{Gcd}}_{R}G} + \textrm{Ggl.dim}R.$$
\end{Corollary}

\begin{proof}It suffices to assume that ${\textrm{Gcd}}_{R}G$ and $\textrm{Ggl.dim}R$ are finite. Then, \cite[Theorem 4.1]{Emm3} implies that $\textrm{spli}R<\infty$ and hence ${\textrm{Gcd}}_{R}G=\widetilde{\textrm{Gcd}}_{R}G<\infty$, by Proposition \ref{newprop}.  Furthermore, we have $\textrm{PGF-gl.dim}R=\textrm{Ggl.dim}R$ and $\textrm{PGF-gl.dim}(RG)=\textrm{Ggl.dim}(RG)$ by Lemma \ref{lemgl}. Therefore, the inequalities follow from Corollary \ref{corrr}.
\end{proof}

The following proposition gives a generalization of \cite[Theorem 5.5]{In} which is stated over $\mathbb{Z}$, over any commutative ring of finite Gorenstein global dimension.

\begin{Proposition}\label{prop65}Let $R$ be a commutative ring such that $\textrm{spli}R<\infty$ and consider a group $G$, a normal subgroup $H$ of $G$ and the corresponding quotient group $Q=G/H$. Then, $$\textrm{spli}(RG) \leq \textrm{spli}(RH) + \widetilde{\textrm{Gcd}}_{R}Q \leq \textrm{spli}(RH) + \textrm{spli}(RQ).$$
\end{Proposition}

\begin{proof}First we note that the quotient homomorphism $G\rightarrow Q$ enables us to regard every $RQ$-module as an $RG$-module and every $RQ$-linear map as an $RG$-linear map. It suffices to assume that $\textrm{spli}(RH)=n$ and $\textrm{spli}(RQ)=m$ are finite. Since $\textrm{spli}R<\infty$, Theorem \ref{theorr} yields the existence of an $R$-split monomorphism of $RQ$-modules $\iota: R \rightarrow \Lambda$, where $\Lambda$ is an $R$-projective $RQ$-module and $\textrm{pd}_{RQ}A =m'<\infty$. We note that $m'=\widetilde{\textrm{Gcd}}_{R}Q\leq m$, by Corollary \ref{cor1} and Lemma \ref{prop414}. It follows that there exists an exact sequence of $RQ$-modules:
	\begin{equation*}\label{eq1}\textbf{Q}= 0\rightarrow P_{m'} \rightarrow \cdots \rightarrow P_1\rightarrow P_0 \rightarrow \Lambda \rightarrow 0,
	\end{equation*}
	where the $RQ$-module $P_i$ is projective for every $i=0,1,\dots,m'$. We consider now an injective $RG$-module $I$ and a truncated $RG$-projective resolution of $I$ of length $n$:
	\begin{equation*}\label{eq5}
		\textbf{P}=	0\rightarrow I_n \rightarrow P'_{n-1} \rightarrow \cdots \rightarrow P'_0 \rightarrow I \rightarrow 0.
	\end{equation*}
	Since $\textrm{spli}(RH)=n$, we obtain that the $RH$-modules $I_n,P'_{n-1},\dots ,P'_0$ are projective. Since $\Lambda$ is $R$-projective (and hence $R$-flat), it follows from Künneth's formula that the total complex of the double complex $\textbf{Q}\otimes_R \textbf{P}$ yields a resolution of $\Lambda\otimes_R I$ by $RG$-modules of length $n+m'$. Invoking Lemma \ref{lemZ}, we infer that this resolution consists of projective $RG$-modules and hence $\textrm{pd}_{RG}(\Lambda\otimes_R I )\leq n+m'$. The $R$-split monomorphism of $RG$-modules $\iota: R \rightarrow \Lambda$ yields an $RG$-monomorphism $I=R\otimes_R I \rightarrow \Lambda\otimes_R I$, which is $RG$-split, and hence $\textrm{pd}_{RG}I\leq \textrm{pd}_{RG}(\Lambda\otimes_R I )\leq n+m'\leq n+m$. It follows that $\textrm{spli}(RG)\leq n+m'\leq n+m$, as needed.\end{proof}

\begin{Corollary}\label{cor2223}Let $R$ be a commutative ring such that $\textrm{spli}R<\infty$ and consider a group $G$, a normal subgroup $H$ of $G$ and the corresponding quotient group $G/H$. Then, $$\textrm{PGF-gl.dim}(RG) \leq  \textrm{PGF-gl.dim}(RH) + \textrm{PGF-gl.dim}(RQ).$$
\end{Corollary}

\begin{proof}It suffices to assume that $\textrm{PGF-gl.dim}(RH)<\infty$ and $\textrm{PGF-gl.dim}(RQ)<\infty$. Then, invoking \cite[Theorem 5.1]{DE}, we infer $\textrm{PGF-gl.dim}(RH)=\textrm{spli}(RH)<\infty$ and $\textrm{PGF-gl.dim}(RQ)=\textrm{spli}(RQ)<\infty$. Therefore, Proposition \ref{prop65} implies that $\textrm{spli}(RG) \leq \textrm{spli}(RH) + \textrm{spli}(RQ)<\infty$. Since $RG\cong {(RG)}^{\textrm{op}}$ and $\textrm{sfli}(RG)\leq \textrm{spli}(RG)<\infty$, invoking again \cite[Theorem 5.1]{DE}, we obtain that $\textrm{PGF-gl.dim}(RG)=\textrm{spli}(RG)<\infty$ and hence $\textrm{PGF-gl.dim}(RG) \leq  \textrm{PGF-gl.dim}(RH) + \textrm{PGF-gl.dim}(RQ)$, as needed.
\end{proof}

\begin{Corollary}Let $R$ be a commutative ring such that $\textrm{spli}R<\infty$ and consider a group $G$, a normal subgroup $H$ of $G$ and the corresponding quotient group $G/H$. Then, $$\textrm{Ggl.dim}(RG) \leq \textrm{Gwgl.dim}(RH) + \textrm{Gwgl.dim}(RQ).$$
\end{Corollary}

\begin{proof}Since $RG\cong {(RG)}^{\textrm{op}}$, $RH\cong {(RH)}^{\textrm{op}}$ and $RQ\cong {(RQ)}^{\textrm{op}}$ this is a direct consequence of Corollary \ref{cor2223} and Lemma \ref{lemgl}.
\end{proof}

\subsection{A bound for the PGF dimension over group rings} 
Our goal in this subsection is to construct an upper bound for the PGF dimension $\textrm{PGF-dim}_{RG}M$ of a $RG$-module $M$, in terms of PGF dimension $\widetilde{\textrm{Gcd}}_{R}G$ of $G$ over $R$ and the PGF dimension $\textrm{PGF-dim}_{R}M$ of the restricted $k$-module $M$. By doing this, we also provide a PGF analogue of the well-known bound of the projective dimension $\textrm{pd}_{RG}M$ of an $RG$-module $M$, in terms of the cohomological dimension $\textrm{cd}_R G$ of the group $G$ and the projective dimension $\textrm{pd}_R M$ of the restricted $R$-module $M$.

\begin{Lemma}\label{newlemmad}Let $R$ be a commutative ring and $G$ be a group. 
	\begin{itemize}
		\item[(i)] For every $RG$-modules $M$, $N$ such that $M$ is $R$-projective and $N$ is PGF as $R$-module, we have $\textrm{PGF-dim}_{RG}(M\otimes_{R}N)\leq \textrm{pd}_{RG}M$.
		\item[(ii)] Let $\textrm{spli}R<\infty$ and $\Lambda$ be a characteristic module for $G$ over $R$. Then for every $RG$-modules $M$, $N$ such that $M$ is PGF and $N$ is R-projective, the $RG$-module $M\otimes_R N$ is PGF.
	\end{itemize}
\end{Lemma}

\begin{proof}(i) It suffices to assume that $\textrm{pd}_{RG}M=n<\infty$. Consider an $RG$-projective resolution $0 \rightarrow P_n \rightarrow \cdots \rightarrow P_1\rightarrow P_0 \rightarrow M \rightarrow 0$ of $M$. Since $M$ is $R$-projective, the projective resolution above is $R$-split. Thus we obtain an induced exact sequence of $RG$-modules with diagonal action $0\rightarrow P_n\otimes_{R}N \rightarrow \cdots \rightarrow P_1\otimes_{R}N \rightarrow P_0\otimes_{R}N \rightarrow M\otimes_{R}N \rightarrow 0$ which constitutes a PGF resolution of the $RG$-module $M\otimes_{R}N$ by Lemma \ref{lem46} (ii). We conclude that $\textrm{PGF-dim}_{RG}(M\otimes_{R}N)\leq \textrm{pd}_{RG}M$, as needed.
	
	(ii) Let $M$ be a PGF $RG$-module and $N$ be an $R$-projective $RG$-module. Then, there exists an acyclic complex of projective $RG$-modules $$\textbf{P}=\cdots \rightarrow P_{2}\rightarrow P_1\rightarrow P_0 \rightarrow P_{-1}\rightarrow \cdots,$$ such that $M=\textrm{Im}(P_1 \rightarrow P_0)$ and the complex $I\otimes_{RG}\textbf{P}$ is exact, whenever $I$ is an injective $RG$-module. Since $N$ is $R$-projective, we obtain the induced complex of $RG$-projective modules $$\textbf{P}\otimes_R N = \cdots \rightarrow P_{2}\otimes_R N\rightarrow P_1\otimes_R N\rightarrow P_0\otimes_R N \rightarrow P_{-1}\otimes_R N\rightarrow \cdots,$$ where $M\otimes_R N= \textrm{Im}(P_1\otimes_R N \rightarrow P_0\otimes_R N)$. The existence of the characteristic module $\Lambda$ implies that ${\widetilde{\textrm{Gcd}}_{R}G}\leq \textrm{pd}_{RG} \Lambda<\infty$ by Corollary \ref{cor2}. Thus, Proposition \ref{prop2} yields $\textrm{sfli}(RG)\leq\textrm{spli}(RG)<\infty$ and hence the complex $I\otimes_{RG}(\textbf{P}\otimes_R N)$ is acyclic for every injective $RG$-module $I$. We conclude that the $RG$-module $M\otimes_R N$ is PGF, as needed. 
\end{proof}

Since every projective module is PGF, the next result is a generalization of Proposition \ref{prop1}. 

\begin{Proposition}\label{popara}Let $G$ be a group and $R$ be a commutative ring such that $\textrm{spli}R<\infty$. Consider a characteristic module $\Lambda$ for $G$ over $R$. Then, for every $RG$-module $M$ which is PGF as $R$-module, we have $\textrm{PGF-dim}_{RG}M \leq \textrm{pd}_{RG} \Lambda$.
\end{Proposition}

\begin{proof} Let $\textrm{pd}_{RG} \Lambda =n$ and $M$ be an $R$-projective $RG$-module. We consider an $RG$-projective resolution $\textbf{P}=\cdots \rightarrow P_2 \rightarrow P_1 \rightarrow P_0 \rightarrow M \rightarrow 0$ of $M$ and set $M_i=\textrm{Im}(P_i \rightarrow P_{i-1})$, $i\geq 0$, where $M_0=M$, the corresponding syzygy modules. It suffices to prove that the $RG$-module $M_n$ is PGF. Since the characteristic module $\Lambda$ is $R$-projective, $\textbf{P} \otimes_R \Lambda$ is an $RG$-projective resolution of $M\otimes_R \Lambda$ with $i$-th syzygies the modules $M_i \otimes_R \Lambda$, $i\geq 0$. Using Lemma \ref{newlemmad}(i) we obtain that $\textrm{PGF-dim}_{RG}(M\otimes_R \Lambda)\leq \textrm{pd}_{RG}\Lambda=n$. Thus, the $RG$-module $M_i \otimes_R \Lambda$ is PGF for every $i\geq n$. Moreover, Lemma \ref{newlemmad}(ii) implies that the $RG$-module $M_n \otimes_R N \otimes_R \Lambda$ is also PGF for every $R$-projective $RG$-module $N$. Let $L=\textrm{Coker}\iota$ and consider the $R$-split short exact sequence of $RG$-modules $0\rightarrow R \xrightarrow{\iota} \Lambda \rightarrow L \rightarrow 0$. Since $L$ is $R$-projective, following the proof of Proposition \ref{prop1} we obtain the acyclic complex of PGF $RG$-modules
	$$\mathfrak{P}=\cdots \rightarrow P_{n+2}\rightarrow P_{n+1}\rightarrow P_n \xrightarrow{\eta \epsilon}  M_n \otimes_R \Lambda \rightarrow M_n \otimes_R L\otimes_R \Lambda \rightarrow M_n \otimes_R L^{\otimes 2}\otimes_R \Lambda \rightarrow \cdots ,$$ which has syzygies the $RG$-modules $(M_i)_{i\geq n}$ and $(M_n \otimes_R L^{\otimes j})_{j\geq 1}$. 
	In order to prove that the $RG$-module $M_n$ is PGF, using the stability result \cite[Theorem 6.7]{St}, it suffices to show that the complex $ I\otimes_{RG} \mathfrak{P}$ is acyclic for every injective $RG$-module $I$. Let $I$ be an injective $RG$-module. Then, as in the proof of Proposition \ref{prop1}, it suffices to show the acyclicity of the complex $(\Lambda\otimes_R I)\otimes_{RG}\mathfrak{P}$. Since the $RG$-modules $\Lambda$ and $L$ are $R$-projective, the complex of $RG$-modules $\mathfrak{P}\otimes_R \Lambda$ is acyclic with syzygies the PGF $RG$-modules $(M_i \otimes_R \Lambda)_{i\geq n}$ and $(M_n \otimes_R L^{\otimes j} \otimes_R \Lambda)_{j\geq 1}$. Moreover, for every PGF RG-module $K$ we have $\textrm{Tor}_1^{RG}(I,K)=0$ and hence the complex $I\otimes_{RG}(\mathfrak{P}\otimes_R \Lambda)\cong (\Lambda\otimes_R I)\otimes_{RG}\mathfrak{P}$ is acyclic. 
\end{proof}
\begin{Corollary}\label{cooor216} Let $G$ be a group and $R$ be a commutative ring such that $\textrm{spli}R<\infty$. Then, $$\textrm{PGF-dim}_{RG}M\leq {\widetilde{\textrm{Gcd}}_{R}G}+\textrm{PGF-dim}_{R}M.$$
\end{Corollary}
\begin{proof}
	It suffices to assume that ${\widetilde{\textrm{Gcd}}_{R}G}=n<\infty$ and $\textrm{PGF-dim}_{R}M=m<\infty$. Then, Corollary \ref{cor1} yields the existence of a characteristic module $\Lambda$ for $G$ over $R$ such that $\textrm{pd}_{RG} \Lambda=n$. Consider an $RG$-projective resolution $\textbf{P}= \cdots  \rightarrow P_1 \rightarrow P_0 \rightarrow M \rightarrow 0 $ of $M$ and let $K_m$ be the corresponding $m$-th syzygy. Since $\textrm{PGF-dim}_{R}M=m$, we obtain that $K_m$ is PGF as $R$-module. Thus, Proposition \ref{popara} implies that $\textrm{PGF-dim}_{RG}K_m \leq n$. Using Lemma \ref{lem63} and the exact sequence $$0\rightarrow K_m \rightarrow P_{m-1} \rightarrow  \cdots \rightarrow P_1 \rightarrow P_0 \rightarrow M \rightarrow 0$$ we conclude that $\textrm{PGF-dim}_{RG}M\leq m+n$.
\end{proof}

\begin{Remark}\rm We note that the right inequality in Corollary \ref{corrr} can be also obtained from Corollary \ref{cooor216}.
\end{Remark}

\section{Properties of the PGF dimension of groups}
The goal of this section is to establish analogous properties for the PGF dimension of groups to those enjoyed by the cohomological dimension and the Gorenstein cohomological dimension (see \cite{ET}). We examine first the dependence of the PGF dimension of a group $G$ upon the coefficient ring $R$. We also examine the relation between the PGF dimension of a group and and its subgroups. Our main tool is the existence of a characteristic module obtain from Corollary \ref{cor1}. Finally, we obtain Gorenstein analogues for Serre's theorem.

\begin{Proposition}Let $G$ be a group and $R,S$ be commutative rings such that $\textrm{sfli}R<\infty$ and $S$ is an extension of $R$. Then, ${\widetilde{\textrm{Gcd}}_{S}G}\leq {\widetilde{\textrm{Gcd}}_{{R}}G}$.
\end{Proposition}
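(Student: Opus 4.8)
The plan is to reduce to the case ${\widetilde{\textrm{Gcd}}_{R}G}=n<\infty$ (otherwise the inequality is vacuous) and then to produce, by base change along the structure homomorphism $R\rightarrow S$, a characteristic module for $G$ over $S$ whose projective dimension over $SG$ is at most $n$. Since $\textrm{sfli}R<\infty$, Corollary \ref{cor1} supplies a characteristic module $\Lambda$ for $G$ over $R$: an $R$-projective $RG$-module with $\textrm{pd}_{RG}\Lambda=n$, together with an $R$-split $RG$-linear monomorphism $\iota\colon R\rightarrow\Lambda$. I would then consider $\Lambda\otimes_R S$ as an $SG$-module with the diagonal action of $G$, equivalently as the extension of scalars of $\Lambda$ along the ring map $RG\rightarrow SG$ (using $SG\cong S\otimes_R RG$).

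First I would record the elementary observations. Because $\Lambda$ is $R$-projective, $\Lambda\otimes_R S$ is $S$-projective; and because $\iota$ is $R$-split, applying $S\otimes_R\_\!\_$ yields an $S$-split $SG$-linear monomorphism $\iota\otimes_R S\colon S\rightarrow\Lambda\otimes_R S$. So $\Lambda\otimes_R S$ already has all the features of a characteristic module for $G$ over $S$ in the sense of Definition \ref{def37}, except that we still must bound its projective dimension over $SG$.

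The key step is the inequality $\textrm{pd}_{SG}(\Lambda\otimes_R S)\leq n$. Here I would start from a projective $RG$-resolution $\textbf{P}\rightarrow\Lambda$ of length $n$, with syzygies $\Lambda_i=\textrm{Im}(P_i\rightarrow P_{i-1})$ and $\Lambda_0=\Lambda$. Since $\Lambda$ is $R$-projective, the short exact sequence $0\rightarrow\Lambda_1\rightarrow P_0\rightarrow\Lambda\rightarrow 0$ is $R$-split, so $\Lambda_1$ is a direct summand of the $R$-projective module $P_0$, hence $R$-projective; iterating, every $\Lambda_i$ is $R$-projective and each short exact sequence arising from $\textbf{P}$ is $R$-split. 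Consequently $\textbf{P}\rightarrow\Lambda\rightarrow 0$ remains exact after applying $\_\!\_\otimes_R S$, and each $P_i\otimes_R S$ is a projective $SG$-module (a direct summand of a free $SG$-module, since $P_i$ is a direct summand of a free $RG$-module). Thus $\textbf{P}\otimes_R S\rightarrow\Lambda\otimes_R S\rightarrow 0$ is a projective $SG$-resolution of length $n$, giving $\textrm{pd}_{SG}(\Lambda\otimes_R S)\leq n$.

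Combining these, $\Lambda\otimes_R S$ is a characteristic module for $G$ over $S$, so Corollary \ref{cor2} yields ${\widetilde{\textrm{Gcd}}_{S}G}\leq\textrm{pd}_{SG}(\Lambda\otimes_R S)\leq n={\widetilde{\textrm{Gcd}}_{R}G}$. I expect the only mildly delicate point to be the exactness of the base-changed resolution, which is precisely what the $R$-projectivity of $\Lambda$ (and hence of all its syzygies) guarantees; note that the hypothesis $\textrm{sfli}R<\infty$ enters only to produce $\Lambda$ via Corollary \ref{cor1}, and no finiteness assumption on $S$ is needed for the rest of the argument.
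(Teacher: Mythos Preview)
Your proposal is correct and follows essentially the same approach as the paper: reduce to the finite case, invoke Corollary \ref{cor1} to obtain a characteristic module $\Lambda$ over $R$, base-change it to $\Lambda\otimes_R S$, and use the $R$-splitness of a finite projective $RG$-resolution of $\Lambda$ (guaranteed by the $R$-projectivity of $\Lambda$) to bound $\textrm{pd}_{SG}(\Lambda\otimes_R S)\leq n$, then conclude via Corollary \ref{cor2}. Your write-up is in fact slightly more explicit than the paper's about why the whole resolution is $R$-split (by tracking the syzygies), but the argument is the same.
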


\begin{proof}It suffices to assume that ${\widetilde{\textrm{Gcd}}_{{R}}G}=n$ is finite. Then, by Corollary \ref{cor1} there exists a $R$-split monomorphism of ${R}G$-modules $\iota: R\rightarrow \Lambda$, where $\Lambda$ is an $R$-projective $RG$-module and $\textrm{pd}_{RG}\Lambda =n$. Then, we have the $S$-split monomorphism of $SG$-modules $S\rightarrow S\otimes_R\Lambda$, where the $SG$-module $S\otimes_R\Lambda$ is $S$-projective. Since $\textrm{pd}_{RG}\Lambda =n$, there exists an exact sequence of $RG$-modules $$0\rightarrow P_n \rightarrow \cdots \rightarrow P_1 \rightarrow P_0 \rightarrow \Lambda \rightarrow 0,$$ where the $RG$-modules $P_i$ are projective for every $i=0,\dots ,n$. Since $\Lambda$ is $R$-projective, the exact sequence above is $R$-split. Thus, we obtain the exact sequence of $SG$-modules $$0\rightarrow S\otimes_{R}P_n \rightarrow \cdots \rightarrow S\otimes_{R}P_1\rightarrow S\otimes_{R}P_0\rightarrow S\otimes_{R}\Lambda\rightarrow 0,$$ where the $SG$-modules $S\otimes_{R}P_i$ are projective for every $i=0,\dots ,n$. Therefore, $\textrm{pd}_{SG}(S\otimes_{R}\Lambda)\leq n$. Using Corollary \ref{cor2} we conclude that ${\widetilde{\textrm{Gcd}}_{S}G}\leq \textrm{pd}_{SG}(S\otimes_{R}\Lambda)\leq n$.
\end{proof}

\begin{Corollary}\label{cor46}
Let $G$ be a group. Then, ${\widetilde{\textrm{Gcd}}_{R}G}\leq {\widetilde{\textrm{Gcd}}_{\mathbb{Z}}G}$ for every commutative ring $R$.
\end{Corollary}


\begin{Theorem}\label{theo1}
	Let $G$ be a group. Then, the following are equivalent:
	\begin{itemize}
		\item[(i)] G is a finite group.
		\item[(ii)] ${\widetilde{\textrm{Gcd}}_{R}G}=0$ for every commutative ring $R$.
		\item[(iii)] ${\widetilde{\textrm{Gcd}}_{\mathbb{Z}}G}=0$.
	\end{itemize}
\end{Theorem}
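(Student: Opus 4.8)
The plan is to prove the chain of implications $(i)\Rightarrow(ii)\Rightarrow(iii)\Rightarrow(i)$, noting that $(ii)\Rightarrow(iii)$ is trivial (specialize $R=\mathbb{Z}$).

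For $(i)\Rightarrow(ii)$, suppose $G$ is finite and let $R$ be any commutative ring. Since $\widetilde{\textrm{Gcd}}_R G = \textrm{PGF-dim}_{RG} R$, it suffices to exhibit an acyclic complex of projective $RG$-modules $\textbf{P}$ with $R$ as a syzygy that remains acyclic after applying $I\otimes_{RG}\_\!\_$ for every injective $RG$-module $I$. The natural candidate is the standard complete resolution of $R$ over $RG$ coming from Tate cohomology of finite groups: splice a projective resolution of $R$ with its $R$-dual (or use the norm map $N\colon RG\to RG$ together with the augmentation and its dual). Concretely, one can take a finite free resolution is unavailable in general, but the classical construction gives a doubly infinite acyclic complex of \emph{projective} (indeed free, if $R$ is nice) $RG$-modules $\textbf{P}$ with $\textrm{Im}\to R$. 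The point to verify is the tensor-acyclicity condition: for an injective $RG$-module $I$, the complex $I\otimes_{RG}\textbf{P}$ computes Tate homology $\widehat{\textrm{H}}_*(G, I)$, which vanishes because $G$ is finite and $I$, being a direct summand of $\textrm{Coind}_1^G(I')$ for $I'$ its underlying abelian group, is cohomologically trivial; alternatively one invokes that over $\mathbb{Z}G$ with $G$ finite every module has finite Gorenstein projective dimension zero behavior and $\widehat{\textrm{H}}_*(G,-)$ kills induced/coinduced modules. Thus $R$ is a PGF $RG$-module and $\widetilde{\textrm{Gcd}}_R G = 0$. (Alternatively, and more in the spirit of the paper: $RG$ itself is a characteristic module via the $R$-split monomorphism $\iota\colon R\to RG$, $1\mapsto \sum_{g\in G} g$, with $\textrm{pd}_{RG} RG = 0$; then Corollary~\ref{cor2} gives $\widetilde{\textrm{Gcd}}_R G \le 0$, and since the vanishing forces $R$ to be PGF, equality holds.)

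For $(iii)\Rightarrow(i)$, assume $\widetilde{\textrm{Gcd}}_{\mathbb{Z}}G = 0$, i.e.\ the trivial $\mathbb{Z}G$-module $\mathbb{Z}$ is PGF. In particular $\mathbb{Z}$ is Gorenstein flat as a $\mathbb{Z}G$-module (the inclusion ${\tt PGF}(R)\subseteq{\tt GFlat}(R)$), hence also Gorenstein projective over $\mathbb{Z}G$ by \cite[Theorem 4.4]{SS}. A module of Gorenstein projective dimension zero over $\mathbb{Z}G$ whose underlying abelian group is free (which $\mathbb{Z}$ is) forces $G$ to be finite: this is exactly the content that $\textrm{Gcd}_{\mathbb{Z}}G = 0 \Leftrightarrow G$ finite, proved by Emmanouil--Talelli, and it can be recovered here directly. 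Concretely, being PGF, $\mathbb{Z}$ embeds into a projective $\mathbb{Z}G$-module, say $\mathbb{Z}\hookrightarrow P$ with $P$ projective; projective $\mathbb{Z}G$-modules are summands of free ones, so $\mathbb{Z}$ embeds into a free $\mathbb{Z}G$-module $F = \bigoplus_\alpha \mathbb{Z}G$. The image of $1\in\mathbb{Z}$ is a $G$-fixed element of $F$; but a nonzero $G$-fixed element of a free $\mathbb{Z}G$-module can only exist when $G$ is finite (if $g$ has infinite order, $\sum a_h h$ fixed under $g$ forces infinitely many equal nonzero coefficients, contradicting finite support). Hence every element of $G$ has finite order; to get full finiteness, note $H^0(G,F) = F^G \ne 0$ while for infinite $G$ one shows $F^G = 0$ in the same way using an element of infinite order, and if $G$ is an infinite torsion group one argues via a finitely generated infinite subgroup or directly that $\mathbb{Z}$ cannot be a submodule of a free module over $\mathbb{Z}G$ — equivalently, $\widehat{\textrm{H}}^0(G,\mathbb{Z})\ne 0$ exactly when $G$ is finite.

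The main obstacle is the tensor-acyclicity verification in $(i)\Rightarrow(ii)$ — ensuring $I\otimes_{RG}\textbf{P}$ is acyclic for \emph{every} injective $RG$-module $I$, not merely for coinduced ones — and, in the converse, pinning down precisely why a $G$-fixed element embedding $\mathbb{Z}$ into a free $\mathbb{Z}G$-module is impossible for infinite $G$ including the torsion case. Both are handled cleanly by routing through the characteristic-module machinery already set up: for $(i)\Rightarrow(ii)$ use $\Lambda = RG$ with $\iota(1)=\sum_{g\in G}g$ (this is $R$-split since $G$ is finite, with $R$-linear retraction $g\mapsto 1$ for one chosen $g$ and $0$ otherwise composed appropriately, or simply the augmentation divided formally — more carefully, the retraction sends each basis element to $1/|G|\cdot$ nothing; instead use that $\mathbb{Z}\to\mathbb{Z}G\to\mathbb{Z}$ via $1\mapsto\sum g\mapsto |G|$ and note $R$-splitness follows because $R\cdot\sum g$ is an $R$-module direct summand of $RG$), giving $\widetilde{\textrm{Gcd}}_R G\le\textrm{pd}_{RG}RG=0$; for the converse, combine $\widetilde{\textrm{Gcd}}_{\mathbb{Z}}G = 0$ with Corollary~\ref{cor416} or Proposition~\ref{prop413} to get $\underline{\textrm{cd}}_{\mathbb{Z}}G = 0$, and then invoke the known fact that $\underline{\textrm{cd}}_{\mathbb{Z}}G = 0$ iff $G$ is finite (Ikenaga \cite{In}).
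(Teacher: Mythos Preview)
Your core arguments are correct, but the presentation is cluttered with false starts, and there is one misattribution. Two points deserve comment.

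For $(i)\Rightarrow(ii)$, your characteristic-module route is genuinely different from the paper and cleaner. The paper proves only $(i)\Rightarrow(iii)$ directly, by taking the standard complete resolution of $\mathbb{Z}$ over $\mathbb{Z}G$ from \cite[VI, Proposition~2.6]{Br} and verifying tensor-acyclicity with injectives via Eckmann--Shapiro (using that the syzygies are $\mathbb{Z}$-free and that for finite $G$ every injective $\mathbb{Z}G$-module is a summand of one coinduced from the trivial subgroup); it then deduces $(ii)$ from $(iii)$ via Corollary~\ref{cor46}. Your observation that $\iota:R\to RG$, $1\mapsto \sum_{g\in G}g$, is an $R$-split $RG$-monomorphism (splitting: project to the coefficient of the identity) with $\textrm{pd}_{RG}(RG)=0$, so Corollary~\ref{cor2} gives $\widetilde{\textrm{Gcd}}_R G\le 0$ immediately for every $R$, is correct and avoids the complete-resolution verification entirely. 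Your hesitation about the $R$-splitting is unnecessary; just take the coefficient-of-$e$ map.

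For $(iii)\Rightarrow(i)$, the paper simply notes $\textrm{Gcd}_{\mathbb{Z}}G\le\widetilde{\textrm{Gcd}}_{\mathbb{Z}}G=0$ and invokes \cite[Corollary~2.3]{ET}. Your route through Corollary~\ref{cor416} and $\underline{\textrm{cd}}_{\mathbb{Z}}G=0$ is also valid and not circular, but the converse $\underline{\textrm{cd}}_{\mathbb{Z}}G=0\Rightarrow G$ finite is due to Emmanouil \cite{Emm1}, not Ikenaga (Ikenaga proved only the forward direction). Your direct argument is actually the simplest of all and does not need the torsion case analysis you worry about: if $\mathbb{Z}$ is PGF over $\mathbb{Z}G$ it embeds $\mathbb{Z}G$-linearly into a free module $F=\bigoplus_\alpha\mathbb{Z}G$, and the image of $1$ is a nonzero element of $F^G=\bigoplus_\alpha(\mathbb{Z}G)^G$; but $(\mathbb{Z}G)^G=0$ whenever $G$ is infinite (a $G$-fixed element of $\mathbb{Z}G$ with finite support must be zero), so $G$ is finite. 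No separate treatment of infinite torsion groups is needed.
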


\begin{proof} $(i)\Rightarrow (iii):$ Let $G$ be a finite group. Then, by \cite[VI Proposition 2.6]{Br}, there exists an acyclic complex of projective $\mathbb{Z}G$-modules $\textbf{P}=\cdots \rightarrow P_1 \rightarrow P_0 \rightarrow P_{-1}\rightarrow \cdots$, which has $\mathbb{Z}$ as syzygy. Let $K_n=\textrm{Im}(P_n \rightarrow P_{n-1})$ and $H={1}$. Then ${K_n}|_H$ is $\mathbb{Z}$-free and the Eckman-Shapiro lemma yields $\textrm{Tor}^{\mathbb{Z}G}_1(I,K_n)=0$ for every injective $\mathbb{Z}G$-module $I$. Thus, the complex $I \otimes_{\mathbb{Z}G} \textbf{P} $ is acyclic for every injective $\mathbb{Z}G$-module $I$ and the $\mathbb{Z}G$-module $\mathbb{Z}$ is $PGF$.
	
	$(iii)\Rightarrow (i):$ Since $\textrm{Gcd}_{\mathbb{Z}}G\leq {\widetilde{\textrm{Gcd}}_{\mathbb{Z}}G}$, we have $\textrm{Gcd}_{\mathbb{Z}}G=0$ and \cite[Corollary 2.3]{ET} implies that the group $G$ is finite.
	
	$(ii)\Leftrightarrow (iii):$ This is an immediate consequence of Corollary \ref{cor46}.\end{proof}

\begin{Corollary}\label{cor44}Let $R$ be a commutative ring and $G$ be a finite group. Then, $\textrm{spli}(RG)=\textrm{spli}R$.
\end{Corollary}

\begin{proof}This follows from Corollary \ref{COR}(i) and Theorem \ref{theo1}.
\end{proof}

We denote by $\textrm{Ghd}_R G$ the Gorenstein flat dimension of the trivial $RG$-module $R$. A consequence of Theorem \ref{theo1} is the following result.

\begin{Corollary}\label{corr35}Let $R$ be a commutative ring and $G$ be a finite group. Then, $\textrm{Ghd}_R G=0$. 
\end{Corollary}

\begin{proof}This is an immediate consequence of Theorem \ref{theo1}, since the PGF dimension bounds the Gorenstein flat dimension of every $RG$-module.
\end{proof}

\begin{Remark}\rm The inverse of Corollary \ref{corr35} cannot be true, since for every infinite locally finite group we have $\textrm{Ghd}_{R} G=0$. Indeed, let $G$ be a locally finite group. Then, $G={\lim\limits_{\longrightarrow}}_{i}G_i$ where $G_i$ are the finite subgroups of $G$. Since for every finite group $G_i$ the $RG_i$ module $R$ is PGF (see Theorem \ref{theo1}), invoking the isomorphism $R\cong {\lim\limits_{\longrightarrow}}_{i}\textrm{Ind}^G_{G_i}R$ and Lemma \ref{lem46}(i), we obtain that the $RG$-module $R$ is a direct limit of PGF modules and hence a direct limit of Gorenstein flat modules. Invoking \cite[Corollary 4.12]{SS}, we conclude that the $RG$-module $R$ is Gorenstein flat and hence $\textrm{Ghd}_{R}G=0$.
\end{Remark}

\begin{Corollary}\label{cor52} Let $R$ be a commutative ring and consider a group $G$ and a normal subgroup $H$ of $G$. Then, for every projective $R[G/H]$-module $P$ we have $\textrm{PGF-dim}_{RG}P \leq {\widetilde{\textrm{Gcd}}_R H}$, where $P$ is viewed as an $RG$-module via the quotient homomorphism $G\rightarrow G/H$. 
\end{Corollary}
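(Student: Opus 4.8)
The plan is to reduce to the case $\widetilde{\textrm{Gcd}}_R H = n < \infty$, and then to realize $P$, up to a direct summand, as a module induced from the trivial $RH$-module, so that Lemma~\ref{lem46}(ii) applies.

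The first point I would record is that the $RG$-module $R[G/H]$ --- that is, the ring $R[G/H]$ regarded as an $RG$-module by restriction of scalars along the quotient map $G \to G/H$ --- is precisely the induced module $\textrm{Ind}_H^G R$ of the trivial $RH$-module $R$: a choice of representatives for the cosets of $H$ in $G$ exhibits $RG$ as a free right $RH$-module on them, whence $RG \otimes_{RH} R \cong \bigoplus_{gH \in G/H} R$ as abelian groups, and the left $G$-action permutes these copies exactly according to the action of $G$ on $G/H$. Since restriction of scalars and induction both commute with direct sums, it follows that every free $R[G/H]$-module, say $F = \bigoplus_{i \in I} R[G/H]$, is isomorphic as an $RG$-module to $\textrm{Ind}_H^G\bigl(R^{(I)}\bigr)$, where $R^{(I)} = \bigoplus_{i \in I} R$ carries the trivial $H$-action.

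Now I would estimate the PGF dimension of $F$ over $RG$. Since the class of PGF modules is closed under arbitrary direct sums, the $I$-fold direct sum of a PGF resolution of the trivial $RH$-module $R$ of length $n$ is a PGF resolution of $R^{(I)}$ of length $n$, so $\textrm{PGF-dim}_{RH} R^{(I)} \le n$; hence Lemma~\ref{lem46}(ii) gives $\textrm{PGF-dim}_{RG} F = \textrm{PGF-dim}_{RG}\bigl(\textrm{Ind}_H^G R^{(I)}\bigr) \le n$. Finally, the projective $R[G/H]$-module $P$ is a direct summand of such a free module $F$, and the PGF dimension cannot increase under passage to direct summands: the $n$-th syzygy of $F$ in an arbitrary projective resolution is PGF --- a standard consequence of the fact that PGF modules are closed under kernels of epimorphisms, once one knows $\textrm{PGF-dim}_{RG} F \le n$ --- and it decomposes as a direct sum of an $n$-th syzygy of $P$ and one of its complement, so an $n$-th syzygy of $P$ is a direct summand of a PGF module and hence PGF. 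Therefore $\textrm{PGF-dim}_{RG} P \le n = {\widetilde{\textrm{Gcd}}_R H}$, as claimed.

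There is no genuine obstacle here --- the result is essentially a corollary of Lemma~\ref{lem46}. The only steps requiring some care are the identification of the $RG$-module $R[G/H]$ with $\textrm{Ind}_H^G R$ and the stability of the PGF dimension under direct sums and direct summands; both are routine consequences of the closure properties of the class of PGF modules recorded in Section 1.
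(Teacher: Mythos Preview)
Your proof is correct and follows essentially the same route as the paper: identify $R[G/H]$ with $\mathrm{Ind}_H^G R$, apply Lemma~\ref{lem46}(ii), and then pass to free and projective $R[G/H]$-modules using closure of PGF dimension under direct sums and direct summands. The only cosmetic difference is that the paper first bounds $\mathrm{PGF\text{-}dim}_{RG} R[G/H]$ and then takes direct sums and summands over $RG$ (citing \cite[Proposition~2.3]{DE}), whereas you take the direct sum on the $RH$-side before inducing; the two orders are equivalent since induction commutes with direct sums.
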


\begin{proof}Since the PGF dimension of the trivial $RH$-module $R$ is equal to ${\widetilde{\textrm{Gcd}}_R H}$ and $\textrm{Ind}^G_H R=R[G/H]$, Lemma \ref{lem46}(iii) yields $\textrm{PGF-dim}_{RG} (R[G/H]) \leq {\widetilde{\textrm{Gcd}}_R H}$. Then, \cite[Proposition 2.3]{DE} implies that for every free $R[G/H]$-module $F$ we also have $\textrm{PGF-dim}_{RG}F\leq {\widetilde{\textrm{Gcd}}_R H}$. Applying again \cite[Proposition 2.3]{DE}, we obtain that $\textrm{PGF-dim}_{RG}P \leq {\widetilde{\textrm{Gcd}}_R H}$ for every projective $R[G/H]$-module. \end{proof}

\begin{Proposition}\label{prop38}Let $G$ be a group and $H$ be a subgroup of $G$. Then, for every commutative ring such that $\textrm{sfli}R<\infty$ we have ${\widetilde{\textrm{Gcd}}_R H}\leq {\widetilde{\textrm{Gcd}}_{R}G}$.
\end{Proposition}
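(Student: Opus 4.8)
The plan is to reduce immediately to the case ${\widetilde{\textrm{Gcd}}_{R}G}=n<\infty$, since otherwise there is nothing to prove, and then to manufacture a characteristic module for $H$ over $R$ by restriction of scalars.

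First I would apply Corollary \ref{cor1}, which is available precisely because $\textrm{sfli}R<\infty$: it produces an $R$-split monomorphism of $RG$-modules $\iota\colon R\rightarrow \Lambda$, where $\Lambda$ is an $R$-projective $RG$-module with $\textrm{pd}_{RG}\Lambda=n$. The point is now to transport $\Lambda$ down to $H$. Restricting scalars along the inclusion $RH\hookrightarrow RG$ does not change the underlying $R$-module, so $\Lambda|_{H}$ is still $R$-projective, and the map $\iota$, viewed as $RH$-linear, is still an $R$-split monomorphism $R\rightarrow \Lambda|_{H}$. It remains to control the $RH$-projective dimension of $\Lambda|_{H}$: since $RG$ is free as an $RH$-module (a transversal of $H$ in $G$ is a basis), every projective $RG$-module restricts to a projective $RH$-module, and hence an $RG$-projective resolution $0\rightarrow P_{n}\rightarrow\cdots\rightarrow P_{0}\rightarrow\Lambda\rightarrow 0$ restricts to an $RH$-projective resolution of $\Lambda|_{H}$ of length at most $n$. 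Thus $\textrm{pd}_{RH}(\Lambda|_{H})\leq n<\infty$, and $\Lambda|_{H}$ is a characteristic module for $H$ over $R$ in the sense of Definition \ref{def37}.

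Finally, I would invoke Corollary \ref{cor2} applied to the group $H$ and the $R$-projective $RH$-module $\Lambda|_{H}$, which yields
\[
{\widetilde{\textrm{Gcd}}_{R}H}\leq \textrm{pd}_{RH}(\Lambda|_{H})\leq n={\widetilde{\textrm{Gcd}}_{R}G},
\]
as required.

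There is no genuine obstacle here; the argument is a direct descent through the characteristic module. The only points requiring a word of care are that $\textrm{sfli}R<\infty$ is exactly what licenses the use of Corollary \ref{cor1} to obtain the characteristic module in the first place (without such a hypothesis one would have to argue directly with PGF resolutions and establish separately that PGF $RG$-modules restrict to PGF $RH$-modules, which is true but less transparent), and that projective dimension does not increase under restriction along $RH\hookrightarrow RG$, which follows from the freeness of $RG$ over $RH$.
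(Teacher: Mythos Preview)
Your proof is correct and follows essentially the same approach as the paper: reduce to the finite case, use Corollary~\ref{cor1} to obtain a characteristic module $\Lambda$ for $G$, restrict it to $H$ (noting that $R$-projectivity, the $R$-split monomorphism, and the bound on projective dimension all survive restriction along $RH\hookrightarrow RG$), and then apply Corollary~\ref{cor2}. Your write-up is in fact slightly more detailed than the paper's, spelling out why $RG$ is $RH$-free and why the hypothesis $\textrm{sfli}R<\infty$ is used.
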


\begin{proof}It suffices to assume that ${\widetilde{\textrm{Gcd}}_{R}G}=n$ is finite. It follows from Corollary \ref{cor1} that there exists an $R$-split monomorphism of $RG$-modules $\iota: R \rightarrow \Lambda$, where $\Lambda$ is an $R$-projective $RG$-module and $\textrm{pd}_{RG}\Lambda =n$. We note that the restriction of every projective $RG$-module to the subgroup $H$ is projective $RH$-module and that the $R$-split monomorphism $\iota$ is restricted to an $R$-split monomorphism of $RH$-modules $\iota|_H: R \rightarrow \Lambda$. Since $\textrm{pd}_{RH}{\Lambda}\leq \textrm{pd}_{RG}{\Lambda}= n$, Corollary \ref{cor2} yields ${\widetilde{\textrm{Gcd}}_R H}\leq n$.
\end{proof}


\begin{Proposition}\label{prop56} Let $R$ be a commutative ring such that $\textrm{sfli}R<\infty$ and consider a group $G$ and a normal subgroup $H$ of $G$. Then, ${\widetilde{\textrm{Gcd}}_{R} G}\leq {\widetilde{\textrm{Gcd}}_{R}}H + {\widetilde{\textrm{Gcd}}_{R}}(G/H)$.
\end{Proposition}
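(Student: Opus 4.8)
The plan is to produce a characteristic module for $G$ over $R$ whose $RG$-projective dimension is bounded by ${\widetilde{\textrm{Gcd}}_{R}}H + {\widetilde{\textrm{Gcd}}_{R}}(G/H)$, and then to conclude with Corollary \ref{cor2}. We may assume that both $m := {\widetilde{\textrm{Gcd}}_{R}}H$ and $k := {\widetilde{\textrm{Gcd}}_{R}}(G/H)$ are finite, since otherwise there is nothing to prove.

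First I would apply Corollary \ref{cor1} to the ring $R$ (using $\textrm{sfli}R < \infty$) and the quotient group $G/H$, obtaining an $R$-split monomorphism of $R[G/H]$-modules $\iota\colon R \to \bar{\Lambda}$ with $\bar{\Lambda}$ an $R$-projective $R[G/H]$-module and $\textrm{pd}_{R[G/H]}\bar{\Lambda} = k$; thus $\bar{\Lambda}$ is a characteristic module for $G/H$ over $R$. Inflating along $G \to G/H$, the module $\bar{\Lambda}$ becomes an $R$-projective $RG$-module and $\iota$ an $R$-split monomorphism of $RG$-modules. Choosing an $R[G/H]$-projective resolution $0 \to P_k \to \cdots \to P_0 \to \bar{\Lambda} \to 0$ of length $k$ and viewing it as an exact sequence of $RG$-modules, each $P_i$ is a projective $R[G/H]$-module regarded as an $RG$-module, so Corollary \ref{cor52} gives $\textrm{PGF-dim}_{RG} P_i \le {\widetilde{\textrm{Gcd}}_{R}}H = m$; Lemma \ref{lem63} then yields $\textrm{PGF-dim}_{RG}\bar{\Lambda} \le k + m$.

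Now $\bar{\Lambda}$ is an $R$-projective $RG$-module of finite PGF dimension, but in general of infinite $RG$-projective dimension, so it is not yet a characteristic module for $G$ over $R$; the key step is to upgrade it. Since $\textrm{sfli}R < \infty$, $\textrm{PGF-dim}_{RG}\bar{\Lambda} < \infty$ and $\textrm{pd}_{R}\bar{\Lambda} = 0$, Proposition \ref{Prop} provides an $R$-split $RG$-exact sequence $0 \to \bar{\Lambda} \to \Lambda \to L \to 0$ with $L$ an $R$-projective $RG$-module and $\textrm{pd}_{RG}\Lambda = \textrm{PGF-dim}_{RG}\bar{\Lambda} \le k + m$. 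As this sequence is $R$-split with $\bar{\Lambda}$ and $L$ both $R$-projective, $\Lambda$ is $R$-projective; and composing $\iota$ with the inclusion $\bar{\Lambda} \hookrightarrow \Lambda$ gives an $R$-split $RG$-linear monomorphism $R \to \Lambda$ (a composite of $R$-split monomorphisms is $R$-split). Hence $\Lambda$ is a characteristic module for $G$ over $R$, and Corollary \ref{cor2} gives ${\widetilde{\textrm{Gcd}}_{R} G} \le \textrm{pd}_{RG}\Lambda \le k + m = {\widetilde{\textrm{Gcd}}_{R}}H + {\widetilde{\textrm{Gcd}}_{R}}(G/H)$.

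The main obstacle, and the reason the proof cannot be shortened, is exactly this upgrading step. One is tempted to work directly with the $R$-split sequence $0 \to R \to \bar{\Lambda} \to L' \to 0$ (where $L' = \textrm{Coker}\,\iota$ is $R$-projective) and deduce $\textrm{PGF-dim}_{RG}R < \infty$ from $\textrm{PGF-dim}_{RG}\bar{\Lambda} < \infty$; but the two-out-of-three behaviour of PGF dimension on this sequence makes finiteness of $\textrm{PGF-dim}_{RG}R$ equivalent to finiteness of $\textrm{PGF-dim}_{RG}L'$, which is in turn equivalent to the finiteness of ${\widetilde{\textrm{Gcd}}_{R}G}$ — the statement we are trying to prove. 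Routing through Proposition \ref{Prop} to manufacture a genuine characteristic module for $G$ avoids this circularity.
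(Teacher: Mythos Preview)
Your proof is correct and follows essentially the same route as the paper's: obtain a characteristic module $\bar{\Lambda}$ for $G/H$ via Corollary \ref{cor1}, bound $\textrm{PGF-dim}_{RG}\bar{\Lambda}$ using Corollary \ref{cor52} and Lemma \ref{lem63}, then upgrade $\bar{\Lambda}$ to a characteristic module for $G$ via Proposition \ref{Prop} and conclude with Corollary \ref{cor2}. The only differences are notational (the paper uses $\Lambda$ and $N$ where you use $\bar{\Lambda}$ and $\Lambda$, and swaps the roles of $m$ and $n$), and your final paragraph explaining why the upgrading step is unavoidable is a helpful addition not present in the paper.
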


\begin{proof}Let $\overline{G}=G/H$ and note that the quotient homomorphism $G\rightarrow \overline{G}$ enables us to regard every $R\overline{G}$-module as an $RG$-module and every $R\overline{G}$-linear map as an $RG$-linear map. It suffices to assume that ${\widetilde{\textrm{Gcd}}_{R}}H=n$ and ${\widetilde{\textrm{Gcd}}_{R}}\overline{G}=m$ are finite. Then, by Corollary \ref{cor1} there exists an $R$-split monomorphism of $R\overline{G}$-modules $\iota: R \rightarrow \Lambda$, where $\Lambda$ is an $R$-projective $R\overline{G}$-module and $\textrm{pd}_{R\overline{G}}\Lambda =m$. It follows that there exists an exact sequence of $R\overline{G}$-modules $$0\rightarrow P_m \rightarrow \cdots \rightarrow P_1\rightarrow P_0 \rightarrow \Lambda \rightarrow 0,$$ where the $R\overline{G}$-module $P_i$ is projective for every $i=0,1,\dots,m$. Then, using Corollary \ref{cor52} we have $\textrm{PGF-dim}_{RG}P_i\leq n$ for every $i=0,1,\dots,m$ and Lemma \ref{lem63} implies that $\textrm{PGF-dim}_{RG}\Lambda \leq n+m$. Thus, by Proposition \ref{Prop} we obtain that there exists an $R$-split $RG$-monomorphism $j: \Lambda \rightarrow N$, where $N$ is $R$-projective and $\textrm{pd}_{RG}N=\textrm{PGF-dim}_{RG}\Lambda\leq n+m$. We consider now the composition $R\xrightarrow{\iota} \Lambda \xrightarrow{j}N$, which is also an $R$-split monomorphism of $RG$-modules. Then, using Corollary \ref{cor2} we conclude that ${\widetilde{\textrm{Gcd}}_{R} G}\leq \textrm{pd}_{RG}N\leq n+m$.
\end{proof}

\begin{Corollary}\label{cor313}Let $R$ be a commutative ring such that $\textrm{sfli}R<\infty$ and consider a group $G$ and a normal subgroup $H$ of $G$. If the quotient group $G/H$ is finite, then ${\widetilde{\textrm{Gcd}}_{R} G} ={\widetilde{\textrm{Gcd}}_{R}}H$.
\end{Corollary}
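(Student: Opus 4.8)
The statement asserts that if $\textrm{sfli}R<\infty$, $H\trianglelefteq G$ is normal, and $G/H$ is finite, then $\widetilde{\textrm{Gcd}}_R G = \widetilde{\textrm{Gcd}}_R H$. The plan is to prove the two inequalities separately. The inequality $\widetilde{\textrm{Gcd}}_R H \le \widetilde{\textrm{Gcd}}_R G$ is immediate from Proposition \ref{prop38}, which applies since $\textrm{sfli}R<\infty$ and $H$ is a subgroup of $G$; no finiteness of $G/H$ is needed for this direction. So the content is in the reverse inequality $\widetilde{\textrm{Gcd}}_R G \le \widetilde{\textrm{Gcd}}_R H$.

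For the reverse inequality, the natural route is to invoke Proposition \ref{prop56}, which gives $\widetilde{\textrm{Gcd}}_R G \le \widetilde{\textrm{Gcd}}_R H + \widetilde{\textrm{Gcd}}_R(G/H)$. Thus it suffices to observe that $\widetilde{\textrm{Gcd}}_R(G/H) = 0$. Here we use that $G/H$ is a finite group: by Theorem \ref{theo1}, for a finite group the PGF dimension over any commutative ring vanishes, i.e. $\widetilde{\textrm{Gcd}}_R(G/H)=0$. (One must first dispose of the trivial case $\widetilde{\textrm{Gcd}}_R H = \infty$, where the claimed equality holds vacuously by the first inequality; so we may assume $\widetilde{\textrm{Gcd}}_R H < \infty$ and then Proposition \ref{prop56} gives a finite upper bound.) Combining, $\widetilde{\textrm{Gcd}}_R G \le \widetilde{\textrm{Gcd}}_R H + 0 = \widetilde{\textrm{Gcd}}_R H$, and together with Proposition \ref{prop38} we conclude equality.

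There is essentially no obstacle here: the corollary is a formal consequence of the two preceding results (Proposition \ref{prop38} for subgroups and Proposition \ref{prop56} for extensions) together with the vanishing criterion of Theorem \ref{theo1} for finite groups. The only point requiring a moment's care is the bookkeeping around infinite dimensions, which is handled by noting that if $\widetilde{\textrm{Gcd}}_R H = \infty$ the statement is trivial, and otherwise all quantities in sight are finite and the inequalities compose without issue. The hypothesis $\textrm{sfli}R<\infty$ is used only through Propositions \ref{prop38} and \ref{prop56}; it is not needed for the appeal to Theorem \ref{theo1}.
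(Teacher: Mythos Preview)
Your proof is correct and follows exactly the same approach as the paper: one inequality from Proposition \ref{prop38}, the other from Proposition \ref{prop56} combined with Theorem \ref{theo1} applied to the finite group $G/H$. The extra care you take with the infinite case is fine but not strictly necessary, since the inequality from Proposition \ref{prop56} is valid (and vacuous) when the right-hand side is infinite.
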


\begin{proof}
Since the quotient group $G/H$ is finite, Theorem \ref{theo1} yields ${\widetilde{\textrm{Gcd}}_{R} (G/H)}=0$. Then, Proposition \ref{prop56} implies that ${\widetilde{\textrm{Gcd}}_{R} G}\leq {\widetilde{\textrm{Gcd}}_{R}}H$. Moreover, by Proposition \ref{prop38} we have ${\widetilde{\textrm{Gcd}}_{R} H}\leq {\widetilde{\textrm{Gcd}}_{R}}G$. We conclude that ${\widetilde{\textrm{Gcd}}_{R} G} ={\widetilde{\textrm{Gcd}}_{R}}H$.
\end{proof}

\begin{Remark}\label{rem510}\rm Let $G$ be a group and $H$ be a finite normal subgroup of $G$. Then, the modules over the group $G/H$ are precisely the modules over the group $G$ on which the subgroup $H$ acts trivially, i.e. $\textrm{R[G/H]-Mod}=\{M\in \textrm{RG-Mod}:M^H=M\}$. Moreover, the finiteness of $H$ yields the following consequences:

	(i) For every projective $RG$-module, the $R[G/H]$-module $P^H$ is projective as well. 
	
	Indeed, it suffices to consider the case where $P=RG$ and notice that $(RG)^H\cong R[G/H]$ as $R[G/H]$-modules.
	
(ii) For every $R$-projective $RG$-module $M$ such that $\textrm{pd}_{RG}<\infty$, the $R[G/H]$-module $M^H$ is also $R$-projective and $\textrm{pd}_{R[G/H]}M^H\leq \textrm{pd}_{RG}M$. 

Indeed, let $M$ be an $R$-projective $RG$-module such that $\textrm{pd}_{RG}=n<\infty$. Then, there exists an exact sequence of $RG$-modules 
		\begin{equation}\label{eq3}
		0\rightarrow P_n \rightarrow \cdots \rightarrow P_1 \rightarrow P_0 \rightarrow M \rightarrow 0,
		\end{equation} where $P_i$ is projective $RG$-module for every $i=0,1,\dots,n$. Since $M$ is $R$-projective, the exact sequence (\ref{eq3}) is $R$-split. Moreover, we regard the exact sequence (\ref{eq3}) as an exact sequence of $RH$-modules, where the $RH$-module $P_i$ is projective for every $i=0,1,\dots ,n$. Since the subgroup $H$ is finite, \cite[VI Corollary 2.3]{Br} implies that the exact sequence (\ref{eq3}) is $RH$-split and hence the $RH$-module $M$ is projective. Then, using the isomorphism $(RH)^H\cong R$, we conclude that $M^H$ is $R$-projective. Moreover, as the exact sequence (\ref{eq3}) is $RH$-split, we obtain an exact sequence of $R[G/H]$-modules $$0\rightarrow P_n^H \rightarrow \cdots \rightarrow P_1^H \rightarrow P_0^H\rightarrow M^H\rightarrow 0,$$ where the $R[G/H]$-module $P_i^H$ is projective for every $i=0,1,\dots ,n$ by (i).
\end{Remark}

\begin{Proposition}\label{prop520}Let $R$ be a commutative ring such that $\textrm{sfli}R<\infty$, $G$ be a group and $H$ be a finite normal subgroup of $G$. Then, ${\widetilde{\textrm{Gcd}}_{R}G}={\widetilde{\textrm{Gcd}}_{R}(G/H)}$.
\end{Proposition}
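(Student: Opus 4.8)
The plan is to establish the two inequalities separately, both by producing a suitable characteristic module. The inequality $\widetilde{\textrm{Gcd}}_R G \le \widetilde{\textrm{Gcd}}_R(G/H)$ will be immediate: since $H$ is finite, Theorem \ref{theo1} gives $\widetilde{\textrm{Gcd}}_R H = 0$, and Proposition \ref{prop56} then yields $\widetilde{\textrm{Gcd}}_R G \le \widetilde{\textrm{Gcd}}_R H + \widetilde{\textrm{Gcd}}_R(G/H) = \widetilde{\textrm{Gcd}}_R(G/H)$, whether or not the right-hand side is finite.

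For the reverse inequality I would first discard the vacuous case $\widetilde{\textrm{Gcd}}_R G = \infty$, and then assume $\widetilde{\textrm{Gcd}}_R G = n < \infty$. Using $\textrm{sfli}R < \infty$, Corollary \ref{cor1} supplies a characteristic module $\Lambda$ for $G$ over $R$, that is, an $R$-split monomorphism of $RG$-modules $\iota \colon R \to \Lambda$ with $\Lambda$ being $R$-projective and $\textrm{pd}_{RG}\Lambda = n$. The strategy is to show that the module of $H$-invariants $\Lambda^H$ is a characteristic module for $G/H$ over $R$ in the sense of Definition \ref{def37}; Corollary \ref{cor2}, applied to the group $G/H$, will then give $\widetilde{\textrm{Gcd}}_R(G/H) \le \textrm{pd}_{R[G/H]}\Lambda^H \le n = \widetilde{\textrm{Gcd}}_R G$. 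Three points have to be checked. First, since $H$ is normal, $\Lambda^H$ carries a natural $R[G/H]$-module structure, and since $\Lambda$ is $R$-projective with $\textrm{pd}_{RG}\Lambda < \infty$, Remark \ref{rem510}(ii) guarantees that $\Lambda^H$ is $R$-projective with $\textrm{pd}_{R[G/H]}\Lambda^H \le \textrm{pd}_{RG}\Lambda = n$. Second, because $R$ is the trivial $RH$-module we have $R^H = R$ and $\iota(R) \subseteq \Lambda^H$, so $\iota$ restricts to an injective $R[G/H]$-linear map $\iota^H \colon R \to \Lambda^H$. Third, this restriction remains $R$-split: if $\rho \colon \Lambda \to R$ is an $R$-linear retraction of $\iota$, then $\rho|_{\Lambda^H} \colon \Lambda^H \to R$ is an $R$-linear retraction of $\iota^H$.

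The only step that uses a genuine input is the behaviour of $H$-invariants, namely that $\Lambda^H$ stays $R$-projective of finite $R[G/H]$-projective dimension; but this is exactly Remark \ref{rem510}(ii), whose proof exploits the finiteness of $H$ via \cite[VI Corollary 2.3]{Br} to convert the $R$-split projective $RG$-resolution of $\Lambda$ into an $RH$-split one. Everything else is a routine bookkeeping of restrictions, so I do not anticipate a real obstacle: the argument is in essence the observation that passing to $H$-fixed points turns a characteristic module for $G$ into a characteristic module for $G/H$, and combining this with Proposition \ref{prop56} forces the equality.
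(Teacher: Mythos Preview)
Your proposal is correct and follows essentially the same approach as the paper: both halves proceed exactly as you describe, with the reverse inequality obtained by passing to $H$-invariants of a characteristic module for $G$ via Remark~\ref{rem510}(ii) and then invoking Corollary~\ref{cor2}. If anything, your argument is slightly more explicit than the paper's, since you spell out why the induced map $R \to \Lambda^H$ remains $R$-split.
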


\begin{proof}Proposition \ref{prop56} yields ${\widetilde{\textrm{Gcd}}_{R} G}\leq {\widetilde{\textrm{Gcd}}_{R}}H + {\widetilde{\textrm{Gcd}}_{R}}(G/H)$. Since the subgroup $H$ is finite, Theorem \ref{theo1} implies that ${\widetilde{\textrm{Gcd}}_{R}}H=0$ and hence we have ${\widetilde{\textrm{Gcd}}_{R} G}\leq {\widetilde{\textrm{Gcd}}_{R}}(G/H)$. It remains to prove the inequality ${\widetilde{\textrm{Gcd}}_{R} (G/ H)}\leq {\widetilde{\textrm{Gcd}}_{R}}G$. For that, it suffices to assume that ${\widetilde{\textrm{Gcd}}_{R}}G=n<\infty$. Then, Corollary \ref{cor1} implies that there exists an $R$-split monomorphism of $RG$-modules $\iota: R \rightarrow \Lambda$, where $\Lambda$ is an $R$-projective $RG$-module and $\textrm{pd}_{RG}\Lambda =n$. Since the group $G$ acts trivially on $R$, we have $\textrm{Im}\iota \subseteq \Lambda^G \subseteq \Lambda^H$. We may therefore consider the $R[G/H]$-module $\Lambda^H$ and the $R$-split $R[G/H]$-monomorphism $j:R\rightarrow \Lambda^H$. Invoking Remark \ref{rem510}(ii) we obtain that the $R[G/H]$-module $\Lambda^H$ is $R$-projective and $\textrm{pd}_{R[G/H]}\Lambda^H\leq \textrm{pd}_{RG}\Lambda =n$. Then, Corollary \ref{cor2} yields ${\widetilde{\textrm{Gcd}}_{R}}(G/H)\leq \textrm{pd}_{R[G/H]}\Lambda^H\leq n$.
\end{proof}

\begin{Corollary}\label{cor216}Let $R$ be a commutative ring such that $\textrm{sfli}R<\infty$, $G$ be a group and $H$ be a finite subgroup of $G$. Then, for the Weyl group $W=N_G(H)/H$ we have ${\widetilde{\textrm{Gcd}}_{R}W}\leq{\widetilde{\textrm{Gcd}}_{R}G}$.
\end{Corollary}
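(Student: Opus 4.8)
The plan is to obtain Corollary \ref{cor216} as a direct concatenation of the two preceding results about subgroups and finite normal subgroups. The key observation is that the normalizer $N_G(H)$ sits between $H$ and $G$, and that $H$ is by construction a \emph{normal} subgroup of $N_G(H)$, so that the Weyl group $W = N_G(H)/H$ is a quotient of $N_G(H)$ by a finite normal subgroup.

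Concretely, I would first apply Proposition \ref{prop38} to the subgroup $N_G(H) \subseteq G$: since $\textrm{sfli}R<\infty$, this gives
$$
{\widetilde{\textrm{Gcd}}_{R}N_G(H)} \leq {\widetilde{\textrm{Gcd}}_{R}G}.
$$
Next, I would apply Proposition \ref{prop520} with the group $N_G(H)$ in place of $G$ and its finite normal subgroup $H$ (again using $\textrm{sfli}R<\infty$), which yields
$$
{\widetilde{\textrm{Gcd}}_{R}W} = {\widetilde{\textrm{Gcd}}_{R}\big(N_G(H)/H\big)} = {\widetilde{\textrm{Gcd}}_{R}N_G(H)}.
$$
Combining the two displays gives ${\widetilde{\textrm{Gcd}}_{R}W} \leq {\widetilde{\textrm{Gcd}}_{R}G}$, as required.

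There is no real obstacle here: the entire content lies in the two propositions already proved, and the only thing to check is the elementary group-theoretic facts that $N_G(H)$ is a subgroup of $G$ and that $H \trianglelefteq N_G(H)$ is finite, both of which are immediate from the definition of the normalizer. The hypothesis $\textrm{sfli}R<\infty$ is needed precisely because it is a standing assumption of both Proposition \ref{prop38} and Proposition \ref{prop520}.
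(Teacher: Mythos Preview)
Your proof is correct and follows exactly the same route as the paper: apply Proposition \ref{prop38} to the subgroup $N_G(H)\subseteq G$, then apply Proposition \ref{prop520} to the finite normal subgroup $H\trianglelefteq N_G(H)$, and combine the resulting inequality and equality.
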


\begin{proof}Invoking Proposition \ref{prop38} we have ${\widetilde{\textrm{Gcd}}_{R}}N_G(H)\leq {\widetilde{\textrm{Gcd}}_{R}(G)}$. Moreover, Proposition \ref{prop520} yields ${\widetilde{\textrm{Gcd}}_{R}(N_G(H))}={\widetilde{\textrm{Gcd}}_{R}W}$. We conclude that ${\widetilde{\textrm{Gcd}}_{R}W}\leq{\widetilde{\textrm{Gcd}}_{R}(G)}$.\end{proof}

 Serre's Theorem \cite[VIII Theorem 3.1]{Br} yields an equality between cohomological dimensions of a group and subgroups with finite index. In the next result we give an analogous result concerning PGF and Gorenstein cohomological dimensions of groups.

\begin{Theorem}Let $R$ be a commutative ring such that $\textrm{spli}R<\infty$ and consider a group $G$ and a subgroup $H$ of $G$ of finite index. Then, ${\widetilde{\textrm{Gcd}}_{R}G}={\widetilde{\textrm{Gcd}}_{R}H}$ and $\textrm{Gcd}_{R}G=\textrm{Gcd}_{R}H$.
\end{Theorem}

\begin{proof}Since the subgroup $H$ of $G$ is of finite index, invoking \cite[5.2(iii)]{GG}, we infer that $\textrm{spli}(RG)=\textrm{spli}(RH)$. Since we have ${\widetilde{\textrm{Gcd}}_{R}H}\leq {\widetilde{\textrm{Gcd}}_{R}G}$ by Proposition \ref{prop38}, it suffices to show that ${\widetilde{\textrm{Gcd}}_{R}G}\leq {\widetilde{\textrm{Gcd}}_{R}H}$. For that, it suffices to assume that ${\widetilde{\textrm{Gcd}}_{R}H}$ is finite. Since $\textrm{spli}R<\infty$, invoking Theorem \ref{theorr}, we obtain that $\textrm{spli}(RH)<\infty$ and hence $\textrm{spli}(RG)<\infty$ as well. Using again Theorem \ref{theorr}, we infer that ${\widetilde{\textrm{Gcd}}_{R}G}<\infty$. Therefore, Proposition \ref{prop413} yields ${\widetilde{\textrm{Gcd}}_{R}G}=\underline{\textrm{cd}}_R G$ and ${\widetilde{\textrm{Gcd}}_{R}H}=\underline{\textrm{cd}}_R H$. Consequently, it suffices to show that $\underline{\textrm{cd}}_R G=\underline{\textrm{cd}}_R H$. Invoking \cite[Proposition 5]{In}, which holds over any commutative ring $R$, we conclude that  ${\widetilde{\textrm{Gcd}}_{R}G}={\widetilde{\textrm{Gcd}}_{R}H}$. The second inequality follows immediately from Proposition \ref{newprop}.
\end{proof}

\begin{Corollary}Let $G$ be a group and $H$ be a subgroup of $G$ of finite index. Then, ${\widetilde{\textrm{Gcd}}_{\mathbb{Z}}G}={\widetilde{\textrm{Gcd}}_{\mathbb{Z}}H}$ and $\textrm{Gcd}_{\mathbb{Z}}G=\textrm{Gcd}_{\mathbb{Z}}H$.
\end{Corollary}

\section{Hyperfinite extensions of PGF modules and the PGF dimension of an ascending union of groups}The goal of this section is to give an upper bound for the PGF dimension of a group, which is expressed as an ascending union of subgroups. For this reason, we study first the hyperfinite extensions of PGF modules and we prove that every hyper-${\tt PGF}(R)$ module is PGF. 
\subsection{Hyperfinite extensions of PGF modules}
As shown in \cite{SS}, the class  ${\tt {PGF}}(R)$ is closed under extensions. An inductive argument implies that an iterated extension of PGF modules is also PGF. In other words, for every nonnegative integer $n$ and every increasing filtration $0=M_0\subseteq M_1 \subseteq M_2 \subseteq \cdots \subseteq M_n=M$ of a module $M$ such that the quotient modules $M_{i+1}/M_i$ are PGF for every $i=0,1,\dots n-1$, we have $M\in {\tt PGF}(R)$. In this section we examine the case of an increasing filtration of infinite length.

Let $\mathfrak{C}$ be a class of modules. We say that a module $M$ is a hyper-$\mathfrak{C}$ module (or a hyperfinite extension of modules in $\mathfrak{C}$) if there exists an ordinal number $\alpha$ and an ascending filtration of $M$ by submodules $M_{\beta}$, which are indexed by the ordinals $\beta \leq \alpha$, such that $M_0 =0$, $M_{\alpha}=M$ and $M_{\beta}/M_{\beta -1}\in \mathfrak{C}$ (respectively, $M_{\beta}=\cup_{\gamma < \beta}M_{\gamma}$) if $\beta \leq \alpha$ is a successor (respectively, a limit) ordinal. In that case, we will say that $(M_{\beta})_{\beta \leq \alpha}$ is a continuous ascending chain of submodules with sections in $\mathfrak{C}$. If $\alpha=2$, we have the case of an extension of modules in $\mathfrak{C}$. Moreover, the direct sum of any family of modules in $\mathfrak{C}$ is a hyper-$\mathfrak{C}$ module and the class of hyper-$\mathfrak{C}$ modules is closed under extensions.

A class $\mathfrak{C}$ of modules is called $\Omega^{-1}$-closed, if for every $C\in \mathfrak{C}$ there exists a short exact sequence $$0\rightarrow C \rightarrow P \rightarrow D \rightarrow 0,$$ where $P$ is projective and $D\in \mathfrak{C}$. We note that the classes ${\tt PGF}(R)$ and ${\tt GProj}(R)$ are $\Omega^{-1}$-closed.
\begin{Proposition}\label{prop71}{\rm{(\cite[Proposition 2.1]{Emm2})}} Let $R$ be a ring and $\mathfrak{C}$ be an $\Omega^{-1}$-closed class consisting of Gorenstein projective modules. Consider a hyper-$\mathfrak{C}$ module $M$ which is endowed with a continuous ascending chain of submodules $(M_{\beta})_{\beta \leq \alpha}$ with sections in $\mathfrak{C}$ for some ordinal number $\alpha$. Then, there exists a hyper-$\mathfrak{C}$ module $N$ with a continuous ascending chain of submodules $(N_{\beta})_{\beta \leq \alpha}$ with sections in $\mathfrak{C}$ and a family of projective modules $(P_{\beta})_{\beta \leq \alpha}$, such that there exists a short exact sequence $$0\rightarrow M \rightarrow Q \rightarrow N \rightarrow 0$$ where $Q=\bigoplus_{\beta \leq \alpha}P_{\beta}$, and short exact sequences $$0\rightarrow M_{\beta +1}/ M_{\beta} \rightarrow P_{b+1} \rightarrow N_{\beta +1}/ N_{\beta} \rightarrow 0,$$ for every $\beta < \alpha$.
\end{Proposition}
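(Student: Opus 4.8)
The plan is to construct the module $N$ together with its filtration and the projective modules $P_\beta$ by transfinite recursion on the ordinal $\alpha$, mimicking the standard Eilenberg-swindle-type argument but carried out compatibly with the filtrations. First I would treat the base and successor cases: if $\alpha$ is a successor, say $\alpha = \gamma + 1$, then the section $M_\gamma = M_{\alpha-1} \subseteq M$ has a shorter chain, so by the recursion hypothesis I have data $(N_\beta)_{\beta \le \gamma}$, $(P_\beta)_{\beta\le\gamma}$ and a short exact sequence $0 \to M_\gamma \to Q_\gamma \to N_\gamma \to 0$ with $Q_\gamma = \bigoplus_{\beta \le \gamma} P_\beta$. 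Since $M/M_\gamma = M_{\alpha+1}/M_\alpha \in \mathfrak{C}$ and $\mathfrak{C}$ is $\Omega^{-1}$-closed, there is a short exact sequence $0 \to M/M_\gamma \to P_{\gamma+1} \to D \to 0$ with $P_{\gamma+1}$ projective and $D \in \mathfrak{C}$. I would then build the pushout/amalgam that produces $0 \to M \to Q_\gamma \oplus P_{\gamma+1} \to N \to 0$, setting $N_{\gamma+1}/N_\gamma = D$; a short diagram chase (using that $P_{\gamma+1}$ is projective and hence the relevant $\mathrm{Ext}^1$ vanishes against Gorenstein projectives, or simply the horseshoe/pushout lemma) gives exactness and the required section sequence $0 \to M_{\gamma+1}/M_\gamma \to P_{\gamma+1} \to N_{\gamma+1}/N_\gamma \to 0$.

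The genuinely delicate case is the \textbf{limit ordinal} step, which is the main obstacle. When $\alpha$ is a limit ordinal, I would like to take $N = \bigcup_{\gamma < \alpha} N_\gamma$, $Q = \bigoplus_{\beta \le \alpha} P_\beta$ with the colimit of the exact sequences obtained at stages $\gamma < \alpha$. The problem is that the constructions at different stages $\gamma < \alpha$ are not automatically coherent: the $(N_\beta)$'s and $(P_\beta)$'s chosen for the chain of length $\gamma$ need not agree with those chosen for a longer chain of length $\gamma' > \gamma$. The fix, as in \cite[Proposition 2.1]{Emm2}, is to perform the whole transfinite recursion \emph{simultaneously} rather than invoking it as a black box: one builds, by recursion on $\beta \le \alpha$, an increasing chain of short exact sequences
$$0 \to M_\beta \to Q_\beta \to N_\beta \to 0, \qquad Q_\beta = \bigoplus_{\delta \le \beta} P_\delta,$$
so that passing from $\beta$ to $\beta+1$ is the successor construction above (applied to the section $M_{\beta+1}/M_\beta$), and passing to a limit ordinal $\lambda \le \alpha$ is simply taking the direct limit (= directed union) of the already-constructed compatible sequences; since a filtered colimit of short exact sequences is exact, and a directed union of projectives-as-summands is handled by $Q_\lambda = \bigoplus_{\delta \le \lambda} P_\delta$, this goes through. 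The continuity condition $N_\lambda = \bigcup_{\gamma < \lambda} N_\gamma$ holds by construction, and each section $N_{\beta+1}/N_\beta = D_\beta \in \mathfrak{C}$.

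Finally I would record that $N$, equipped with $(N_\beta)_{\beta\le\alpha}$, is indeed a hyper-$\mathfrak{C}$ module by construction, that $Q = Q_\alpha = \bigoplus_{\beta\le\alpha}P_\beta$ is projective, and that the short exact sequence $0 \to M \to Q \to N \to 0$ together with the section sequences $0\to M_{\beta+1}/M_\beta \to P_{\beta+1} \to N_{\beta+1}/N_\beta \to 0$ are exactly the output claimed. The hypothesis that $\mathfrak{C}$ consists of Gorenstein projective modules is what guarantees, at each successor step, that the middle term can be taken to be the projective $P_{\beta+1}$ (rather than a mere extension), because for $C \in \mathfrak{C}$ and $P$ projective one has $\mathrm{Ext}^1_R(\_,C)$ controllable along complete projective resolutions; this is the only place the Gorenstein-projectivity is used, and it is precisely the content packaged in the $\Omega^{-1}$-closedness together with the Gorenstein condition. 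Since this is verbatim the statement proved in \cite{Emm2}, I would in the actual write-up simply cite it, but the sketch above is the argument I would reconstruct.
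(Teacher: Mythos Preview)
The paper does not prove this proposition at all; it simply records the statement and cites \cite[Proposition~2.1]{Emm2}. Your sketch is essentially the standard transfinite horseshoe argument that appears in that reference, and the overall architecture (build the chain of short exact sequences $0\to M_\beta\to Q_\beta\to N_\beta\to 0$ by recursion on $\beta\le\alpha$, using the $\Omega^{-1}$-closedness at successor steps and directed unions at limit steps) is correct.

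One point deserves sharpening. At the successor step you need to extend the embedding $M_\beta\hookrightarrow Q_\beta$ along $M_\beta\hookrightarrow M_{\beta+1}$; the obstruction lives in $\mathrm{Ext}^1_R(M_{\beta+1}/M_\beta,\,Q_\beta)$, and this vanishes because $M_{\beta+1}/M_\beta\in\mathfrak{C}$ is Gorenstein projective and $Q_\beta$ is projective. You identify the right place where Gorenstein projectivity enters, but you wrote the Ext group with the variables in the wrong slot (you wrote ``$\mathrm{Ext}^1_R(\_,C)$ controllable'' when what is used is $\mathrm{Ext}^1_R(C,P)=0$ for $C$ Gorenstein projective and $P$ projective). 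With that correction, the snake lemma gives the embedding $M_{\beta+1}\hookrightarrow Q_\beta\oplus P_{\beta+1}$ and the cokernel sequence $0\to M_{\beta+1}/M_\beta\to P_{\beta+1}\to N_{\beta+1}/N_\beta\to 0$ exactly as required. (Also, ``$M/M_\gamma=M_{\alpha+1}/M_\alpha$'' in your successor paragraph is a typo for $M_{\gamma+1}/M_\gamma$.)
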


\begin{Remark}\label{rem72} Since ${\tt PGF}(R)\subseteq {\tt GProj}(R)$ and the class ${\tt PGF}(R)$ is $\Omega^{-1}$-closed, Proposition \ref{prop71} implies that the class hyper-${\tt PGF}(R)$ is $\Omega^{-1}$-closed as well.
\end{Remark}

In the following lemma we give a simple criterion for a class of modules to be contained in ${\tt PGF}(R)$.

\begin{Lemma}\label{lem73}
Let $R$ be a ring and $\mathfrak{C}$ be an $\Omega^{-1}$-closed class. If $\textrm{Tor}^R_i(I,C)=0$, whenever $C\in\mathfrak{C}$, $I$ is injective and $i>0$, then $\mathfrak{C}$ consists of PGF modules.
\end{Lemma}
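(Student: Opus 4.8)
The plan is to take an arbitrary $C \in \mathfrak{C}$ and exhibit an acyclic complex of projective modules with $C$ as a syzygy that stays acyclic after applying $I \otimes_R {-}$ for every injective module $I$. First I would use the hypothesis that $\mathfrak{C}$ is $\Omega^{-1}$-closed: starting from $C = C_0$, repeatedly choose short exact sequences $0 \to C_j \to P_j \to C_{j+1} \to 0$ with $P_j$ projective and $C_{j+1} \in \mathfrak{C}$, and splice them to obtain a coresolution $0 \to C \to P_0 \to P_1 \to P_2 \to \cdots$ by projective modules whose cosyzygies all lie in $\mathfrak{C}$. Combining this with an ordinary projective resolution $\cdots \to Q_1 \to Q_0 \to C \to 0$ gives a full acyclic complex $\mathbf{P}$ of projective modules with $C$ as one of its syzygies (and all of whose syzygies on the coresolution side lie in $\mathfrak{C}$, while the ones on the resolution side are syzygies of $C$).

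Next I would verify that $I \otimes_R \mathbf{P}$ is acyclic for every injective module $I$. Acyclicity of a spliced complex reduces to checking that $\mathrm{Tor}^R_i(I, Z) = 0$ for $i > 0$, where $Z$ ranges over the syzygy/cosyzygy modules appearing in $\mathbf{P}$: for each short exact sequence $0 \to Z' \to P \to Z'' \to 0$ with $P$ projective, the long exact sequence in $\mathrm{Tor}$ shows that exactness of $I \otimes_R {-}$ at the relevant spot is governed by $\mathrm{Tor}^R_1(I, Z'')$ and the vanishing of higher $\mathrm{Tor}$'s, via dimension shifting $\mathrm{Tor}^R_{i}(I,Z') \cong \mathrm{Tor}^R_{i+1}(I,Z'')$. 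On the coresolution side the cosyzygies are all in $\mathfrak{C}$, so the hypothesis $\mathrm{Tor}^R_i(I,C')=0$ for $C' \in \mathfrak{C}$, $i>0$, applies directly. On the resolution side the syzygies $\Omega^k C$ are ordinary syzygies of $C \in \mathfrak{C}$, and $\mathrm{Tor}^R_i(I, \Omega^k C) \cong \mathrm{Tor}^R_{i+k}(I, C)$, which vanishes for $i > 0$ again by the hypothesis applied to $C$ itself. Hence $I \otimes_R \mathbf{P}$ is exact.

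Therefore $C$, being a syzygy of such a complex $\mathbf{P}$, is a PGF module by the very definition recalled in the preliminaries, and since $C \in \mathfrak{C}$ was arbitrary, $\mathfrak{C} \subseteq {\tt PGF}(R)$.

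The main obstacle is purely bookkeeping: one must be careful that \emph{every} place where $I \otimes_R \mathbf{P}$ could fail to be exact is covered — both the internal spots of the coresolution (handled by $\mathfrak{C}$ being closed under the relevant cosyzygies) and the internal spots of the projective resolution of $C$ (handled by dimension-shifting the vanishing of $\mathrm{Tor}^R_{*}(I,C)$). Once the indexing of syzygies and the direction of dimension shifting are pinned down, each vanishing statement is an immediate consequence of the long exact sequence in $\mathrm{Tor}$ together with projectivity (hence flatness) of the $P_j$ and $Q_j$, so no genuine difficulty remains beyond organizing the splicing cleanly.
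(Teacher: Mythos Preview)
Your proof is correct and follows essentially the same route as the paper: build the projective coresolution from the $\Omega^{-1}$-closed hypothesis, splice it with an ordinary projective resolution of $C$, and check that tensoring with an injective $I$ preserves exactness using the Tor-vanishing hypothesis together with dimension shifting. The only cosmetic difference is that the paper outsources the ``resolution side'' verification (your dimension-shifting argument $\mathrm{Tor}^R_i(I,\Omega^kC)\cong\mathrm{Tor}^R_{i+k}(I,C)$) to \cite[Proposition~2.2]{St}, whereas you spell it out directly; the content is the same.
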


\begin{proof}Since the class $\mathfrak{C}$ is $\Omega^{-1}$-closed, for every $\mathfrak{C}$-module $C$ there exists an exact sequence $$\textbf{P}=0\rightarrow C \rightarrow P_{-1} \rightarrow P_{-2}\rightarrow \cdots \rightarrow P_{-n} \rightarrow \cdots ,$$ such that the module $P_{-n}$ is projective and the syzygies $K_{-n}=\textrm{Im}(P_{-n}\rightarrow P_{-n-1})$ are contained in $\mathfrak{C}$ for every $n\geq 1$. Let $I$ be an injective $R$-module. Then $\textrm{Tor}_1^R(I,K_{-n})=0$ for every $n\geq 1$ and hence the complex $I\otimes_R\textbf{P}$ is exact. Moreover, since $C\in\mathfrak{C}$ we have $\textrm{Tor}_i^R(I,C)=0$. Invoking \cite[Proposition 2.1]{St} we conclude that $C\in {\tt PGF}(R)$.\end{proof}

\begin{Corollary}\label{cor54}Let $R$ be a ring. Then, every hyper-${\tt PGF}(R)$ module is PGF.
\end{Corollary}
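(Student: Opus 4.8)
The plan is to apply Lemma \ref{lem73} to the class $\mathfrak{C} = \text{hyper-}{\tt PGF}(R)$. By Remark \ref{rem72} this class is $\Omega^{-1}$-closed (which is exactly the output of Proposition \ref{prop71} together with ${\tt PGF}(R)\subseteq {\tt GP}(R)$), so the only thing that remains to verify is the Tor-vanishing hypothesis: $\textrm{Tor}^R_i(I,M)=0$ whenever $M$ is a hyper-${\tt PGF}(R)$ module, $I$ is injective, and $i>0$. Granting this, Lemma \ref{lem73} immediately gives hyper-${\tt PGF}(R)\subseteq {\tt PGF}(R)$, which is the assertion.

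For the Tor-vanishing, I would argue by transfinite induction on the length $\alpha$ of a continuous ascending chain $(M_\beta)_{\beta\le\alpha}$ with sections in ${\tt PGF}(R)$. The base case is $M_0=0$, trivial. For a successor ordinal $\beta+1$, the short exact sequence $0\to M_\beta\to M_{\beta+1}\to M_{\beta+1}/M_\beta\to 0$ has $M_\beta$ a shorter hyper-${\tt PGF}(R)$ module (inductive hypothesis) and $M_{\beta+1}/M_\beta\in{\tt PGF}(R)$; since PGF modules are Gorenstein flat, they have $\textrm{Tor}^R_i(I,-)=0$ for injective $I$ and $i>0$ (this is part of what makes the defining complex $I\otimes_R\textbf{P}$ exact — PGF modules are syzygies of such complexes), so the long exact sequence in Tor forces $\textrm{Tor}^R_i(I,M_{\beta+1})=0$ for $i>0$. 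For a limit ordinal $\beta$, we have $M_\beta=\varinjlim_{\gamma<\beta}M_\gamma$ (a direct/filtered colimit along the chain, which is exact), and since Tor commutes with filtered colimits, $\textrm{Tor}^R_i(I,M_\beta)=\varinjlim_{\gamma<\beta}\textrm{Tor}^R_i(I,M_\gamma)=0$ for $i>0$. This closes the induction.

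The main obstacle is conceptual rather than computational: one must be sure that the $\Omega^{-1}$-closedness of hyper-${\tt PGF}(R)$ — which is the genuinely nontrivial ingredient and is precisely where Proposition \ref{prop71} is used (it produces the module $N$ and the projective $Q$ in the short exact sequence $0\to M\to Q\to N\to 0$, with $N$ again hyper-${\tt PGF}(R)$) — is legitimately available, together with the fact, from \cite{SS}, that PGF implies Gorenstein projective, so that the hypotheses of Proposition \ref{prop71} (an $\Omega^{-1}$-closed class consisting of Gorenstein projective modules) are met for $\mathfrak{C}={\tt PGF}(R)$. Once those two facts are in hand, the argument is just the transfinite induction above plus a direct invocation of Lemma \ref{lem73}; no delicate estimates are needed. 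An alternative, essentially equivalent route would be to iterate Proposition \ref{prop71} to build, from a complete projective resolution-type complex, an acyclic complex of projectives with the given hyper-${\tt PGF}(R)$ module as a syzygy which stays acyclic after $I\otimes_R-$, but routing through Lemma \ref{lem73} is cleaner and is clearly the intended path.
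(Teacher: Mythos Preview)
Your proposal is correct and follows essentially the same route as the paper: the paper's proof also applies Lemma \ref{lem73} to the class of hyper-${\tt PGF}(R)$ modules, invoking Remark \ref{rem72} for $\Omega^{-1}$-closedness and establishing the Tor-vanishing via transfinite induction together with the commutation of Tor with filtered colimits. Your write-up simply spells out the induction in more detail than the paper does.
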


\begin{proof}Since the functors $\textrm{Tor}^R_i(I,\_\!\_)$ vanish on the class of PGF modules for all injective modules $I$ and all $i>0$ (see \cite[Proposition 2.1]{St}), using transfinite induction and the fact that the Tor functor preserves filtered colimits, we conclude that these functors vanish also on the class of hyper-${\tt PGF}(R)$ modules. Moreover, the class of hyper-${\tt PGF}(R)$ modules is $\Omega^{-1}$-closed (see Remark \ref{rem72}). Thus, the result is a consequence of Lemma \ref{lem73}.
\end{proof}

\subsection{The PGF dimension of an ascending union of groups}
Let $G$ be a group and $\mu$ be a limit ordinal such that there exists an ascending filtration of $G$ by subgroups $G_{\lambda}$ indexed by ordinals $\lambda \leq \mu$, where $G_{\mu}=G$ and $G_{\lambda}=\bigcup_{\kappa < \lambda} G_{\kappa}$ if $\lambda\leq \mu$ is a limit ordinal. We refer to the ascending chain of subgroups $(G_{\lambda})_{\lambda\leq \mu}$ as an exhaustive continuous ascending filtration of $G$.

\begin{Proposition}\label{prop55}Let $R$ be a commutative ring and consider a group $G$, a limit ordinal $\mu$ and an exhaustive continuous ascending filtration of $G$ by subgroups $(G_{\lambda})_{\lambda\leq \mu}$. If $M$ is an $RG$-module, which is PGF as an $RG_{\lambda}$-module for every $\lambda<\mu$, then $\textrm{PGF-dim}_{RG}M\leq 1$.
\end{Proposition}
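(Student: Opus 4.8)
The plan is to build an acyclic complex of projective $RG$-modules that exhibits $M$ as a first syzygy, after replacing $M$ by a suitable extension. First I would choose, for each $\lambda \le \mu$, a projective $RG_\lambda$-resolution in a coherent way: more precisely, I would use the standard bar-type resolution $\textbf{B}(G_\lambda)$ of $R$ over $RG_\lambda$, which is functorial in the group, so that the inclusions $G_\kappa \hookrightarrow G_\lambda$ induce maps of complexes $\textbf{B}(G_\kappa) \to \textbf{B}(G_\lambda)$ and $\varinjlim_{\lambda < \mu} \textbf{B}(G_\lambda) = \textbf{B}(G)$. Tensoring with $M$ over $R$ (with the diagonal action) and using that $M$ is $R$-flat — indeed $R$-projective, being PGF over each $RG_\lambda$ and hence, by Lemma \ref{lem1}, PGF and in particular flat over $R$; but actually one only needs $M$ to be a module, since $\textbf{B}(G_\lambda)\otimes_R M$ is still a projective $RG_\lambda$-resolution of $M$ by Remark \ref{rem1} — I obtain compatible projective $RG_\lambda$-resolutions $\textbf{P}^{(\lambda)} \to M \to 0$ whose colimit over $\lambda < \mu$ is a projective $RG$-resolution $\textbf{P}^{(\mu)} \to M \to 0$.

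Next I would splice these resolutions on the right with a coherent family of right-acyclic continuations. Since $M$ is PGF as an $RG_\lambda$-module, there is a coacyclic complex: an exact sequence $0 \to M \to Q^{(\lambda)}_0 \to Q^{(\lambda)}_1 \to \cdots$ of projective $RG_\lambda$-modules staying exact after $I\otimes_{RG_\lambda}\_\!\_$. The subtlety is that these need not be coherent in $\lambda$; to fix this I would instead argue filtration-wise on the level of syzygies. Let $K_\lambda = \ker(\textbf{P}^{(\lambda)}_0 \to M)$ and assemble $K = \varinjlim_\lambda K_\lambda = \ker(\textbf{P}^{(\mu)}_0 \to M)$. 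The idea is to show $K$ is PGF over $RG$; then $0 \to K \to \textbf{P}^{(\mu)}_0 \to M \to 0$ together with the projective resolution of $K$ gives $\textrm{PGF-dim}_{RG} M \le 1$. To see $K$ is PGF over $RG$: each $K_\lambda$ is a syzygy of an $RG_\lambda$-projective resolution of $M$, hence PGF over $RG_\lambda$ (kernels of epimorphisms in a complete projective resolution lift), so $\textrm{Tor}^{RG_\lambda}_i(I|_{G_\lambda}, K_\lambda) = 0$ for injective $RG$-modules $I$ and $i>0$; by Eckmann--Shapiro / flat base change and passing to the colimit, $\textrm{Tor}^{RG}_i(I,K) = 0$ for all $i>0$. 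Since the class of $RG$-modules $N$ with $\textrm{Tor}^{RG}_i(I,N)=0$ for all injective $I$ and all $i>0$ is $\Omega^{-1}$-closed (one builds the coresolution using that $K$ is an $RG$-submodule of the projective $\textbf{P}^{(\mu)}_0$ with PGF-over-each-$RG_\lambda$ quotient — or more cleanly, $K$ itself is a filtered colimit of the $RG_\lambda$-PGF modules $K_\lambda$, so it is hyper-PGF-like), Lemma \ref{lem73} applies and yields $K \in {\tt PGF}(RG)$.

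Cleaner still, and the route I would actually write: show directly that $M$ is hyper-${\tt PGF}(RG)$-up-to-one-syzygy by applying Corollary \ref{cor54}. Form $K = \varinjlim_\lambda K_\lambda$ as above; the filtration $(K_\lambda)_{\lambda\le\mu}$ is a continuous ascending chain whose successive quotients $K_{\lambda+1}/K_\lambda$ I would analyze and show to be PGF over $RG$ (using that $\textrm{Ind}$ preserves PGF by Lemma \ref{lem46}(i) and that the relevant quotients are built from induced modules from $RG_\lambda$'s), so $K$ is a hyper-${\tt PGF}(RG)$ module and hence PGF by Corollary \ref{cor54}; then $0 \to K \to \textbf{P}^{(\mu)}_0 \to M \to 0$ with $\textbf{P}^{(\mu)}_0$ projective gives $\textrm{PGF-dim}_{RG}M \le 1$.

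The main obstacle is coherence: ensuring that the PGF-witnessing complexes over the various $RG_\lambda$ can be chosen compatibly under the inclusions, so that their colimit is again a bona fide PGF-witnessing complex over $RG$ rather than merely an acyclic complex of projectives. The bar resolution handles the \emph{left} half functorially for free; the delicate point is the \emph{right} (coacyclic) half and the exactness of $I\otimes_{RG}\_\!\_$ after passing to the colimit. I expect to resolve this exactly as indicated — by transferring everything to a vanishing-of-$\textrm{Tor}$ statement, where filtered colimits are harmless since $\textrm{Tor}$ commutes with them — and then invoking Lemma \ref{lem73} / Corollary \ref{cor54} rather than manipulating complexes by hand.
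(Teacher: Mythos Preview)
Your proposal has the right destination (Corollary \ref{cor54}) but a genuine gap in reaching it. The filtration $(K_\lambda)_{\lambda\le\mu}$ with $K_\lambda=\ker(\textbf{P}^{(\lambda)}_0\to M)$ is \emph{not} a chain of $RG$-submodules of $K$: under the diagonal action, $g\cdot(h\otimes m)=gh\otimes gm$ leaves $RG_\lambda\otimes_R M$ as soon as $g\notin G_\lambda$, so $K_\lambda\subseteq RG_\lambda\otimes_R M$ is only an $RG_\lambda$-submodule. Hence neither Corollary \ref{cor54} nor the hyper-${\tt PGF}$ machinery applies to $(K_\lambda)$ as written. Your Tor route also breaks: the class $\{N:\textrm{Tor}^{RG}_i(I,N)=0\text{ for all injective }I,\ i>0\}$ is not $\Omega^{-1}$-closed (from $0\to N\to P\to N'\to 0$ one gets $\textrm{Tor}_1(I,N')=\ker(I\otimes N\to I\otimes P)$, which need not vanish), so Lemma \ref{lem73} is unavailable for it. A further issue is that the bar-resolution terms $B_n(G_\lambda)\otimes_R M$ are $RG_\lambda$-projective only when $M$ is $R$-projective; your appeal to Lemma \ref{lem1} for this needs $\textrm{sfli}\,R<\infty$, which the proposition does not assume, and the fallback ``one only needs $M$ to be a module'' is incorrect.

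The paper repairs exactly this by inducing first and filtering afterwards. Set $M_\lambda=\textrm{Ind}^G_{G_\lambda}M=RG\otimes_{RG_\lambda}M$; these are PGF \emph{$RG$-modules} by Lemma \ref{lem46}(i) (no hypothesis on $R$ needed), the transition maps $M_\kappa\to M_\lambda$ are surjective, and $M_\mu=M$. Now $N_\lambda=\ker(M_0\to M_\lambda)$ is a continuous ascending chain of $RG$-submodules of $N_\mu$, with successive quotients $N_{\lambda+1}/N_\lambda\cong\ker(M_\lambda\to M_{\lambda+1})$ that are kernels of epimorphisms between PGF $RG$-modules and hence PGF. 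Corollary \ref{cor54} then gives $N_\mu\in{\tt PGF}(RG)$, and the short exact sequence $0\to N_\mu\to M_0\to M\to 0$ (with $M_0$ PGF) yields $\textrm{PGF-dim}_{RG}M\le 1$. In effect, the missing step in your sketch is to replace each $K_\lambda$ by its induced module before forming the filtration; once you do that, the argument becomes the paper's.
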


\begin{proof}For every ordinal number $\lambda\leq \mu$ we regard $M$ as an $RG_{\lambda}$-module and we let $M_{\lambda}=\textrm{Ind}^{G}_{G_{\lambda}}M=RG\otimes_{RG_{\lambda}} M$. Since $G_{\mu}=G$ we have $M_{\mu}=M$. In the case where $\lambda<\mu$, the $RG_{\lambda}$-module $M$ is PGF and hence the $RG_{\lambda}$-module $M_{\lambda}$ is also PGF (see Lemma \ref{lem46}(i)). For every ordinal numbers $\kappa \leq \lambda \leq \mu$, the embedding $G_{\kappa}\hookrightarrow G_{\lambda}$ induces a surjective $RG$-linear map $M_{\kappa}\rightarrow M_{\lambda}$, and hence the family $(M_{\lambda})_{\lambda\leq \mu}$ is endowed with the structure of a direct system of $RG$-modules, with surjective structure maps. In particular, for every ordinal number $\lambda \leq \mu$ the embedding $G_0 \hookrightarrow G_{\lambda}$ induces a surjective $RG$-linear map $M_0 \rightarrow M_{\lambda}$. We let $N_{\lambda}=\textrm{Ker}(M_0\rightarrow M_{\lambda})$ and we observe that for every ordinal numbers $\kappa \leq \lambda \leq \mu$ we have the inclusion $N_{\kappa}\subseteq N_{\lambda}$ and $N_0=0$. We consider the following commutative diagram with exact rows:
	\[
	\begin{array}{ccccccccc}
	0 &{\longrightarrow} & N_{\kappa}& \longrightarrow & M_0 & \longrightarrow & M_{\kappa}& \longrightarrow & 0\\
	& & \downarrow & & \parallel & & \downarrow \\
	0 &{\longrightarrow} & N_{\lambda}& \longrightarrow&M_0 & \longrightarrow & M_{\lambda}& \rightarrow & 0
	\end{array}
	\]
	
	\smallskip \noindent Then, the snake lemma implies that $N_{\lambda}/N_{\kappa}\cong \textrm{Ker}(M_{\kappa}\rightarrow M_{\lambda})$ for every $\kappa \leq \lambda \leq \mu$. Since the class $\tt PGF$(RG) is closed under kernels of epimorphisms, we infer that the quotient modules $N_{\lambda}/N_{\kappa}$ are PGF for every $\kappa \leq \lambda \leq \mu$. Furthermore, if $\lambda \leq \mu$ is a limit ordinal, then $G_{\lambda}=\bigcup_{\kappa < \lambda} G_{\kappa}$ and hence $M_{\lambda}= {\lim\limits_{\longrightarrow}}_{\kappa < \lambda} M_{\kappa}$. Applying the colimit functor to the first row of the above commutative diagram, we obtain a commutative diagram with exact rows 
	\[
	\begin{array}{ccccccccc}
		0 &{\longrightarrow} &{\lim\limits_{\longrightarrow}}_{\kappa < \lambda} N_{\kappa}& \longrightarrow & M_0 & \rightarrow &{\lim\limits_{\longrightarrow}}_{\kappa < \lambda} M_{\kappa}& \longrightarrow & 0\\
		& & \downarrow & & \parallel & & \downarrow \\
		0 &{\longrightarrow} & N_{\lambda}& \longrightarrow&M_0 & \longrightarrow & M_{\lambda}& \longrightarrow & 0
	\end{array},
	\] and hence $N_{\lambda}= {\lim\limits_{\longrightarrow}}_{\kappa < \lambda} N_{\kappa}=\bigcup_{\kappa < \lambda} N_{\kappa}$. Consequently, the $RG$-module $N_{\mu}$ is a hyper-$\tt PGF$(RG) module. Invoking Corollary \ref{cor54}, we conclude that the $RG$-module $N_{\mu}$ is PGF as well. Thus, the short exact sequence $0\rightarrow N_{\mu}\rightarrow M_0\rightarrow M_{\mu}\rightarrow 0$ yields $\textrm{PGF-dim}_{RG}M\leq 1$.\end{proof}

\begin{Corollary}\label{cor56}Let $R$ be a commutative ring and consider a group $G$, a limit ordinal $\mu$ and an exhaustive continuous ascending filtration of $G$ by subgroups $(G_{\lambda})_{\lambda\leq \mu}$. Then, for every $RG$-module $M$ we have $\textrm{PGF-dim}_{RG}M\leq 1 +\textrm{sup}_{\lambda<\mu}\textrm{PGF-dim}_{RG_{\lambda}}M$.
\end{Corollary}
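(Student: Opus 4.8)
The plan is to reduce the statement to Proposition \ref{prop55} by replacing $M$ with a sufficiently high syzygy over $RG$. Write $n=\sup_{\lambda<\mu}\textrm{PGF-dim}_{RG_{\lambda}}M$; we may assume $n$ is finite, and since the case $n=0$ is exactly Proposition \ref{prop55}, we may also assume $n\geq 1$. The goal is then to show $\textrm{PGF-dim}_{RG}M\leq n+1$.

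First I would fix an $RG$-projective resolution $\cdots\rightarrow P_1\rightarrow P_0\rightarrow M\rightarrow 0$ and let $K=\textrm{Im}(P_n\rightarrow P_{n-1})$, so that $0\rightarrow K\rightarrow P_{n-1}\rightarrow\cdots\rightarrow P_0\rightarrow M\rightarrow 0$ is exact with the $P_i$ projective $RG$-modules. Since $RG$ is free as an $RG_{\lambda}$-module for every $\lambda\leq\mu$, restriction along $RG_{\lambda}\hookrightarrow RG$ carries projectives to projectives; hence, for each $\lambda<\mu$, this exact sequence is the truncation of an $RG_{\lambda}$-projective resolution of $M$ whose $n$-th syzygy is $K$. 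Because $\textrm{PGF-dim}_{RG_{\lambda}}M\leq n$, the standard syzygy behaviour of the PGF dimension (see \cite[Proposition 2.4]{DE}, together with \cite[Theorem 3.4]{DE}) forces $K$ to be a PGF $RG_{\lambda}$-module: the PGF dimension is computed by any projective resolution, via a generalized Schanuel argument using the closure of ${\tt PGF}$ under extensions and direct summands.

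Now $K$ is an $RG$-module that is PGF as an $RG_{\lambda}$-module for every $\lambda<\mu$, so Proposition \ref{prop55} gives $\textrm{PGF-dim}_{RG}K\leq 1$. Feeding this back into the exact sequence $0\rightarrow K\rightarrow P_{n-1}\rightarrow\cdots\rightarrow P_0\rightarrow M\rightarrow 0$ and applying Lemma \ref{lem63} (the $P_i$ being projective, hence of PGF dimension $0$), we obtain $\textrm{PGF-dim}_{RG}M\leq n+\max\{1,0,\dots,0\}=n+1$, which is the desired bound. Alternatively, one can run a direct induction on $n$: the base case $n=0$ is Proposition \ref{prop55}, and in the inductive step one chooses a short exact sequence $0\rightarrow M'\rightarrow P\rightarrow M\rightarrow 0$ with $P$ an $RG$-projective module (hence $RG_{\lambda}$-projective for every $\lambda$), observes that $\textrm{PGF-dim}_{RG_{\lambda}}M'\leq n-1$ for all $\lambda<\mu$, applies the inductive hypothesis to $M'$, and finishes with Lemma \ref{lem63}.

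The only genuine difficulty is the middle step: recognizing that the $n$-th syzygy of a module of $\textrm{PGF-dim}_{RG_{\lambda}}$ at most $n$ is itself PGF over $RG_{\lambda}$, independently of the chosen projective resolution; this is where one must invoke the basic stability properties of ${\tt PGF}$ from \cite{DE} and \cite{SS}. Everything surrounding it — the compatibility of restriction with projectivity, and the final dimension count via Lemma \ref{lem63} — is routine.
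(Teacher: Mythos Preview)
Your proof is correct and follows essentially the same approach as the paper: take an $RG$-projective resolution, observe that its $n$-th syzygy is PGF over each $RG_{\lambda}$ (the paper cites \cite[Proposition 2.2]{DE} for this), apply Proposition \ref{prop55} to that syzygy, and finish with Lemma \ref{lem63}. The only differences are cosmetic---your separate treatment of $n=0$ and the alternative inductive argument are unnecessary but harmless.
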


\begin{proof}
It suffices to assume that $\textrm{sup}_{\lambda<\mu}\textrm{PGF-dim}_{RG_{\lambda}}M=n$ is finite. Let $$\textbf{P}=\cdots \rightarrow P_n \rightarrow P_{n-1}\rightarrow \cdots \rightarrow P_0 \rightarrow M \rightarrow 0$$ be an $RG$-projective resolution of $M$ and $N=\textrm{Im}(P_n \rightarrow P_{n-1})$. Viewing $\textbf{P}$ as a projective resolution of the $RG_{\lambda}$-module $M$, we conclude that the $RG_{\lambda}$-module $N$ is PGF for every $\lambda<\mu$ (see \cite[Proposition 2.2]{DE}). Invoking Proposition \ref{prop55}, we infer that $\textrm{PGF-dim}_{RG}N\leq 1$, and hence $\textrm{PGF-dim}_{RG}M\leq \textrm{PGF-dim}_{RG}N + n \leq 1+n$ (see Lemma \ref{lem63}).\end{proof}

\begin{Corollary}\label{cor57}Let $R$ be a commutative ring and consider a group $G$ which is endowed with an exhaustive continuous ascending filtration by subgroups $(G_{\lambda})_{\lambda\leq \mu}$, for some limit ordinal $\mu$. Then, ${\widetilde{\textrm{Gcd}}_{R}G}\leq 1 +\textrm{sup}_{\lambda<\mu}{\widetilde{\textrm{Gcd}}_{R}G_{\lambda}}$.
\end{Corollary}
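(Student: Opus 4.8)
The plan is to derive this immediately from Corollary \ref{cor56} by specializing to the trivial module. First I would recall that, by definition, ${\widetilde{\textrm{Gcd}}_{R}G}=\textrm{PGF-dim}_{RG}R$, and observe that the restriction of the trivial $RG$-module $R$ along any inclusion $RG_{\lambda}\hookrightarrow RG$ is precisely the trivial $RG_{\lambda}$-module $R$; hence $\textrm{PGF-dim}_{RG_{\lambda}}R={\widetilde{\textrm{Gcd}}_{R}G_{\lambda}}$ for every $\lambda<\mu$. Applying Corollary \ref{cor56} to the $RG$-module $M=R$ then gives
$$\textrm{PGF-dim}_{RG}R\leq 1+\textrm{sup}_{\lambda<\mu}\textrm{PGF-dim}_{RG_{\lambda}}R,$$
which is exactly the asserted inequality ${\widetilde{\textrm{Gcd}}_{R}G}\leq 1+\textrm{sup}_{\lambda<\mu}{\widetilde{\textrm{Gcd}}_{R}G_{\lambda}}$.

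Since Corollary \ref{cor56} already packages the transfinite-induction machinery — through Proposition \ref{prop55} and the fact (Corollary \ref{cor54}) that every hyper-${\tt PGF}(RG)$ module is PGF — there is essentially no further obstacle: the only thing to verify is the compatibility of the trivial module with restriction to subgroups, which is immediate. (Alternatively one could run Proposition \ref{prop55} directly on $M=R$ after truncating an $RG$-projective resolution of $R$ and invoking Lemma \ref{lem63}, but reusing Corollary \ref{cor56} is cleaner.)
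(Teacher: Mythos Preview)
Your proposal is correct and follows exactly the paper's approach: the paper's proof is the single line ``This follows from Corollary \ref{cor56} by letting $M=R$.'' Your added remark that restriction of the trivial $RG$-module to each $G_{\lambda}$ is the trivial $RG_{\lambda}$-module is precisely the (trivial) compatibility check underlying that specialization.
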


\begin{proof}This follows from Corollary \ref{cor56} by letting $M=R$.\end{proof}

\begin{Corollary}Let $R$ be a commutative ring and consider a group $G$ which is expressed as the union of an ascending sequence of subgroups $(G_n)_{n}$. Then, ${\widetilde{\textrm{Gcd}}_{R}G}\leq 1 +\textrm{sup}_{n}{\widetilde{\textrm{Gcd}}_{R}G_{n}}$.
\end{Corollary}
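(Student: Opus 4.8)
The plan is to obtain this statement as the immediate specialization of Corollary \ref{cor57} to the case of a countable ascending union, i.e.\ by taking the limit ordinal $\mu$ to be $\omega$, the first infinite ordinal. First I would observe that an ascending sequence of subgroups $(G_n)_{n}$ with $G=\bigcup_n G_n$ is nothing but an exhaustive continuous ascending filtration of $G$ in the sense of Section 3.2: index the members by the ordinals $\lambda\leq\omega$, setting $G_\lambda=G_n$ when $\lambda=n$ is a finite ordinal and $G_\omega=G$. The only limit ordinal $\leq\omega$ is $\omega$ itself, and the continuity requirement at $\omega$ is exactly the equality $G_\omega=\bigcup_{\kappa<\omega}G_\kappa=\bigcup_n G_n=G$, which holds by hypothesis. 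Thus the hypotheses of Corollary \ref{cor57} are met with $\mu=\omega$.

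Having checked this, Corollary \ref{cor57} applies directly and gives ${\widetilde{\textrm{Gcd}}_{R}G}\leq 1+\textrm{sup}_{\lambda<\omega}{\widetilde{\textrm{Gcd}}_{R}G_\lambda}=1+\textrm{sup}_{n}{\widetilde{\textrm{Gcd}}_{R}G_{n}}$, which is the claimed inequality. I expect no real obstacle here: all the substance lies in Corollary \ref{cor57} (and, behind it, in Proposition \ref{prop55} together with Corollary \ref{cor54} on hyper-${\tt PGF}(R)$ modules). This final statement is simply the countable-cofinality instance, worth recording on its own because ascending chains indexed by $\mathbb{N}$ are the case most frequently encountered in practice; the proof is a two-line reduction and needs no further computation.
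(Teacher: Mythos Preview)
Your proposal is correct and is exactly the paper's own approach: the paper's proof reads in full ``This is a special case of Corollary \ref{cor57} for the limit ordinal $\mu=\omega$.'' Your additional verification that an ascending sequence indexed by $\mathbb{N}$ is an exhaustive continuous ascending filtration with $\mu=\omega$ is a harmless elaboration of this one-line reduction.
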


\begin{proof}This is a special case of Corollary \ref{cor57} for the limit ordinal $\mu=\omega$.\end{proof}

\section{Finiteness of PGF dimension and complete cohomology}

Mislin \cite{Mi} defined for any module $M$ complete cohomology functors
$\widehat{\mbox{Ext}}^{*}_{R}(M,\_\!\_)$ and a natural transformation
${\mbox{Ext}}^{*}_{R}(M,\_\!\_) \rightarrow
\widehat{\mbox{Ext}}^{*}_{R}(M,\_\!\_)$
as the projective completion of the ordinary Ext functors
${\mbox{Ext}}^{*}_{R}(M,\_\!\_)$. It follows that
${\widehat{\mbox{Ext}}}^{*}_{R}(M,N)=0$ if $\mbox{pd}_{R}N < \infty$. The equivalent approach given by Benson and Carlson \cite{BC} implies
that the elements in the kernel of the canonical map
${\mbox{Hom}}_{R}(M,N) \rightarrow \widehat{\mbox{Ext}}^{0}_{R}(M,N)$ are those
$R$-linear maps $f : M \rightarrow N$, which are such that the $R$-linear map
$\Omega^{n}f : \Omega^{n}M \rightarrow \Omega^{n}N$ induced by $f$ between the
$n$-th syzygy modules of $M$ and $N$ factors through a projective module for $n \gg 0$.

Let $M$ be a module such that $\textrm{PGF-dim}_R M<\infty$. Then, $\textrm{Gpd}_R M<\infty$ and hence there exists a totally acyclic complex of projective modules $\textbf{P}$, which coincides with a projective resolution $\textbf{Q}$ of $M$ in sufficiently large degrees. If
$\tau: \textbf{P} \rightarrow \textbf{Q}$ is a chain map which is the identity in
sufficiently large degrees, then the complete cohomology functors
$\widehat{\mbox{Ext}}^{*}_{R}(M,\_\!\_)$ may be computed as the cohomology groups
of the complex $\mbox{Hom}_{R}(\textbf{P},\_\!\_)$, while the canonical map
${\mbox{Ext}}^{*}_{R}(M,\_\!\_) \rightarrow
\widehat{\mbox{Ext}}^{*}_{R}(M,\_\!\_)$
is that induced by the chain map $\tau$ (see \cite[Lemma 2.4]{Mi}). In particular, the canonical map $\mbox{Ext}_{R}^{n}(M,N) \rightarrow \widehat{\mbox{Ext}}^{n}_{R}(M,N)$ is bijective if $n > \mbox{Gpd}_{R}M$ and surjective if $n = \mbox{Gpd}_{R}M$. If $ M$ is a PGF module, then $M$ is also Gorenstein projective and hence the canonical map $\mbox{Ext}_{R}^{n}(M,N) \rightarrow \widehat{\mbox{Ext}}^{n}_{R}(M,N)$ is bijective if $n > 0$ and surjective if $n = 0$.

The next result shows that the conditions on the finiteness of the projective
dimension of the module $K$ which appears in the short exact sequences
in \cite[Theorem 3.4(ii),(iii)]{DE} may be relaxed to the assertion that certain
elements of complete cohomology groups vanish.

\begin{Theorem}\label{Theor61}
	Let $M$ be an $R$-module. Then, the following conditions are equivalent.\begin{itemize}
		\item[(i)] $\textrm{PGF-dim}_{R}M < \infty$.
		\item[(ii)] There exists a short exact sequence of $R$-modules
		$0 \rightarrow K \stackrel{\iota}{\rightarrow} G
		\stackrel{p}{\rightarrow} M \rightarrow 0$,
		where $G$ is PGF, such that the image of the
		classifying element $\xi \in \mbox{Ext}_{R}^1(M,K)$ vanishes
		in the group $\widehat{\mbox{Ext}}^{1}_{R}(M,K)$.
		\item[(iii)] There exists a short exact sequence of $R$-modules
		$0 \rightarrow M \stackrel{\jmath}{\rightarrow} \Lambda
		\stackrel{q}{\rightarrow} L \rightarrow 0$,
		where $L$ is PGF, such that the image of
		$\jmath \in \textrm{Hom}_{R}(M,\Lambda)$ vanishes in the group
		$\widehat{\textrm{Ext}}^{0}_{R}(M,\Lambda)$.
	\end{itemize}
\end{Theorem}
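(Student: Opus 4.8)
The plan is to establish the cycle of implications $(i)\Rightarrow(ii)\Rightarrow(iii)\Rightarrow(i)$, using as the backbone the characterizations from \cite[Theorem 3.4]{DE}, where the intermediate module is required to have \emph{finite projective dimension}, and then explaining how to weaken that to the vanishing-in-complete-cohomology condition. For $(i)\Rightarrow(ii)$: if $\textrm{PGF-dim}_R M<\infty$, then by \cite[Theorem 3.4(ii)]{DE} there is a short exact sequence $0\to K\to G\to M\to 0$ with $G$ PGF and $\textrm{pd}_R K<\infty$; the complete cohomology group $\widehat{\textrm{Ext}}^1_R(M,K)$ vanishes because $\textrm{pd}_R K<\infty$ (this is the immediate consequence of the definition recalled just before the theorem), so in particular the image of the classifying element $\xi$ is zero. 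The implication $(i)\Rightarrow(iii)$ is handled symmetrically via \cite[Theorem 3.4(iii)]{DE}, producing $0\to M\to\Lambda\to L\to 0$ with $L$ PGF and $\textrm{pd}_R\Lambda<\infty$, whence $\widehat{\textrm{Ext}}^0_R(M,\Lambda)=0$ and the image of $\jmath$ is forced to vanish.

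The substance of the theorem is in the converses $(ii)\Rightarrow(i)$ and $(iii)\Rightarrow(i)$, which say that the weaker hypothesis still forces finite PGF dimension. For $(ii)\Rightarrow(i)$: suppose $0\to K\xrightarrow{\iota} G\xrightarrow{p} M\to 0$ with $G$ PGF and the image of $\xi\in\textrm{Ext}^1_R(M,K)$ zero in $\widehat{\textrm{Ext}}^1_R(M,K)$. I would first replace $K$ by a module of finite PGF dimension: pick an epimorphism $F\twoheadrightarrow K$ from a free (or PGF) module with kernel $K'$; if I can show $\textrm{PGF-dim}_R K<\infty$ then, splicing with $0\to K\to G\to M\to 0$ and using that the class of modules of finite PGF dimension is resolving/coresolving in the appropriate sense (together with Lemma \ref{lem63}), I get $\textrm{PGF-dim}_R M<\infty$. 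The key computational input is the description of the kernel of $\textrm{Ext}^*_R(M,\_\!\_)\to\widehat{\textrm{Ext}}^*_R(M,\_\!\_)$ recalled before the theorem: the vanishing of $\xi$ in $\widehat{\textrm{Ext}}^1_R(M,K)$ means that, after passing to a high enough syzygy, the pullback extension becomes trivial, i.e.\ $\Omega^n\xi$ factors through a projective. Concretely I would translate this into the statement that some syzygy $\Omega^{n}M$ sits in an extension $0\to \Omega^n K\to (\text{projective})\oplus(\text{PGF})\to \Omega^n M\to 0$, so $\Omega^n M$ is PGF, hence $\textrm{PGF-dim}_R M\le n<\infty$. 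Dually, for $(iii)\Rightarrow(i)$: the vanishing of the image of $\jmath$ in $\widehat{\textrm{Ext}}^0_R(M,\Lambda)$ means precisely (by the Benson--Carlson description) that $\Omega^n\jmath:\Omega^n M\to\Omega^n\Lambda$ factors through a projective for $n\gg 0$; combining this factorization with the exact sequence $0\to M\to\Lambda\to L\to 0$ and the PGF-ness of $L$, I would show that a high syzygy of $M$ is a direct summand of a PGF module (up to projectives), hence PGF, which again yields $\textrm{PGF-dim}_R M<\infty$.

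The main obstacle I anticipate is making the syzygy-factorization argument fully rigorous: one must be careful that ``factors through a projective after applying $\Omega^n$'' can be arranged simultaneously for the relevant maps, that syzygies are taken compatibly (choosing the same projective resolution throughout), and that the passage from ``$\Omega^n M$ is a summand of a PGF module modulo a projective summand'' to ``$\Omega^n M$ is PGF'' legitimately uses closure of ${\tt PGF}(R)$ under direct summands and extensions (as recorded in the Preliminaries). A secondary technical point is that in $(ii)\Rightarrow(i)$ one first needs $K$ itself to have finite PGF dimension before Lemma \ref{lem63} applies; this should follow by an inductive dimension-shifting argument along a projective resolution of $M$, reducing to the PGF-ness of a sufficiently high syzygy, exactly as in \cite[Theorem 3.4]{DE} but with the projective-dimension bookkeeping replaced by the complete-cohomology vanishing. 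I would organize the write-up so that the dual arguments for $(ii)$ and $(iii)$ share a single lemma about when a syzygy becomes PGF, invoked twice.
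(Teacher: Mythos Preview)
Your easy directions $(i)\Rightarrow(ii)$ and $(i)\Rightarrow(iii)$ are fine and match the paper. The substantive converses, however, contain a genuine gap, and it is the same gap in both.

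In $(ii)\Rightarrow(i)$ you write that the vanishing of $\xi$ ``translates into the statement that some syzygy $\Omega^{n}M$ sits in an extension $0\to \Omega^n K\to (\text{projective})\oplus(\text{PGF})\to \Omega^n M\to 0$, so $\Omega^n M$ is PGF.'' That extension already exists for \emph{every} $n$ by the horseshoe lemma applied to $0\to K\to G\to M\to 0$ (with $\Omega^n G$ PGF); it has nothing to do with the vanishing of $\xi$, and it does \emph{not} force $\Omega^n M$ to be PGF, since ${\tt PGF}(R)$ is closed under kernels of epimorphisms and extensions but not under cokernels. So the inference fails. Similarly in $(iii)\Rightarrow(i)$, the bare fact that $\Omega^n\jmath$ factors through a projective does not by itself exhibit $\Omega^n M$ as a summand of a PGF module; an injective map factoring through a projective only embeds $\Omega^n M$ into a projective, which is not enough.

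What you are missing is the step that actually \emph{uses} the PGF hypothesis on $G$ (resp.\ $L$): since a PGF module is Gorenstein projective, the canonical map $\mathrm{Hom}_R(G,K)\to\widehat{\mathrm{Ext}}^0_R(G,K)$ (resp.\ $\mathrm{Hom}_R(L,\Lambda)\to\widehat{\mathrm{Ext}}^0_R(L,\Lambda)$) is \emph{surjective}. A diagram chase in the long exact sequence then produces a map $f:G\to K$ with $[f\iota]=[1_K]$ in $\widehat{\mathrm{Ext}}^0_R(K,K)$ (resp.\ $f:L\to\Lambda$ with $[fq]=[1_\Lambda]$). Writing $1_K=f\iota+(1_K-f\iota)$, the endomorphism $\Omega^n(1_K-f\iota)$ factors through a projective for $n\gg 0$, while $\Omega^n(f\iota)$ factors through the PGF module $\Omega^n G$; hence $1_{\Omega^n K}$ factors through a PGF module and $\Omega^n K$ is a \emph{summand} of a PGF module, so PGF. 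Thus $\mathrm{PGF\text{-}dim}_RK<\infty$, and then $\mathrm{PGF\text{-}dim}_RM<\infty$ from the short exact sequence. The argument for $(iii)$ is the same with $\Lambda$ in place of $K$. The point is that the target of the ``identity factors through PGF'' argument is $K$ (resp.\ $\Lambda$), not $M$; your attempt to run it on $M$ directly cannot close.
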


\begin{proof}
	$(i)\Rightarrow(ii):$ Since $\textrm{PGF-dim}_{R}M < \infty$, \cite[Theorem 3.4]{DE} yields the existence of a short exact sequence of $R$-modules $0 \rightarrow K \xrightarrow{\iota} G\xrightarrow{p} M \rightarrow 0$, where $G$ is PGF and $\mbox{pd}_{R}K < \infty$. Thus, $\widehat{\mbox{Ext}}^{1}_{R}(M,K)=0$ and hence the image of the classifying element $\xi$ vanishes therein.
	
	$(ii)\Rightarrow(i):$ We consider the commutative diagram with exact rows
	\[
	\begin{array}{ccccc}
		\mbox{Hom}_{R}(G,K) & \stackrel{\iota^{*}}{\longrightarrow} &
		\mbox{Hom}_{R}(K,K) & \stackrel{\partial}{\longrightarrow} &
		\mbox{Ext}_{R}^{1}(M,K) \\
		\downarrow & & \downarrow & & \downarrow \\
		\widehat{\mbox{Ext}}^{0}_{R}(G,K) & \stackrel{\iota^{*}}{\longrightarrow} &
		\widehat{\mbox{Ext}}^{0}_{R}(K,K) & \stackrel{\partial}{\longrightarrow} &
		\widehat{\mbox{Ext}}^{1}_{R}(M,K)
	\end{array}
	\]
	Since the module $G$ is PGF, and hence is Gorenstein projective, we obtain that the additive map
	$\mbox{Hom}_{R}(G,K) \rightarrow \widehat{\mbox{Ext}}^{0}_{R}(G,K)$ is
	surjective. Then, the vanishing of the image of $\xi = \partial(1_{K})$ in the group $\widehat{\mbox{Ext}}^{1}_{R}(M,K)$
	yields the existence of an $R$-linear map 
	$f \in \mbox{Hom}_{R}(G,K)$, such that 
	$[f \iota] = [1_{K}]$ in $\widehat{\mbox{Ext}}^{0}_{R}(K,K)$. Letting $g=1_{K} - f \iota$, we obtain that the induced
	endomorphism $\Omega^{n}g$ of $\Omega^{n}K$ factors through a projective module
	for $n \gg 0$. Furthermore, it is clear that the endomorphism
	$\Omega^{n}(f \iota) =
	\! \left( \Omega^{n}f \right) \! \left( \Omega^{n}\iota \right)$
	of $\Omega^{n}K$ factors through $\Omega^{n}G$ for every $n>0$. Since $G$ is
	PGF, using induction on $n$ and \cite[Proposition 2.1(iii)]{St} we obtain that the syzygy module $\Omega^{n}G$ is $PGF$ as well. Since every projective module is PGF, we infer that
	$1_{\Omega^n K}=\Omega^{n}1_{K} = \Omega^{n}(g + f \iota) = \Omega^{n}g + \Omega^{n}(f \iota)$
	factors through a PGF module for $n \gg 0$. We conclude that 
	$\Omega^{n}K$ is a direct summand of a PGF module and hence 
	it is also PGF for $n \gg 0$ (see \cite[Proposition 2.3]{DE}). It 
	follows that $\mbox{PGF-dim}_{R}K \leq n$ for $n \gg 0$ and the short exact sequence
	$0 \rightarrow K \stackrel{\iota}{\rightarrow} G
	\stackrel{p}{\rightarrow} M \rightarrow 0$ then implies that $\mbox{PGF-dim}_{R}M \leq n + 1$ (see \cite[Proposition 2.4]{DE}).
	
	$(i)\Rightarrow(iii):$ Since $\textrm{PGF-dim}_{R}M < \infty$, \cite[Theorem 3.4]{DE} yields the existence of a short exact sequence of $R$-modules
	$0 \rightarrow M \stackrel{\jmath}{\rightarrow} \Lambda
	\stackrel{q}{\rightarrow} L \rightarrow 0$,
	where $L$ is PGF and $\mbox{pd}_{R}\Lambda < \infty$ (see \cite[Theorem 3.4]{DE}). Then, $\widehat{\mbox{Ext}}^{0}_{R}(M,\Lambda)$ is the trivial group and hence the
	image of $\jmath$ vanishes therein.
	
	$(iii)\Rightarrow(i):$ We consider the commutative diagram with exact rows
	\[\begin{array}{ccccc}
		\mbox{Hom}_{R}(L,\Lambda) & \stackrel{q^{*}}{\longrightarrow} & \mbox{Hom}_{R}(\Lambda,\Lambda) &
		\stackrel{\jmath^{*}}{\longrightarrow} & \mbox{Hom}_{R}(M,\Lambda) \\
		\downarrow & & \downarrow & & \downarrow \\
		\widehat{\mbox{Ext}}^{0}_{R}(L,\Lambda) & \stackrel{q^{*}}{\longrightarrow} &
		\widehat{\mbox{Ext}}^{0}_{R}(\Lambda,\Lambda) & \stackrel{\jmath^{*}}{\longrightarrow} &
		\widehat{\mbox{Ext}}^{0}_{R}(M,\Lambda)
	\end{array}\]
	Since the module $L$ is PGF, and hence is Gorenstein projective, we obtain that the additive map $\mbox{Hom}_{R}(L,\Lambda) \rightarrow \widehat{\mbox{Ext}}^{0}_{R}(L,\Lambda)$ is
	surjective. Then, the vanishing of the image of $\jmath = \jmath^{*}(1_{\Lambda})$ in the group $\widehat{\mbox{Ext}}^{0}_{R}(M,\Lambda)$ yields the existence of an $R$-linear map
	$f \in \mbox{Hom}_{R}(L,\Lambda)$, such that
	$[fq] = [1_{\Lambda}]$ in $\widehat{\mbox{Ext}}^{0}_{R}(\Lambda,\Lambda)$. Letting $g=1_{\Lambda} - fq$, we obtain that the induced endomorphism
	$\Omega^{n}g$ of $\Omega^{n}\Lambda$ factors through a projective module for $n \gg 0$.
	Furthermore, it is clear that the endomorphism
	$\Omega^{n}(fq) = \! \left( \Omega^{n}f \right) \! \left( \Omega^{n}q \right)$ of
	$\Omega^{n}\Lambda$ factors through $\Omega^{n}L$ for all $n$.  Since $L$ is
	PGF, using induction on $n$ and \cite[Proposition 2.1(iii)]{St} we obtain that the syzygy module $\Omega^{n}L$ is $PGF$ as well. Since every projective module is PGF, we infer that
	$1_{\Omega^n \Lambda}=\Omega^{n}1_{\Lambda} = \Omega^{n}(g + fq) = \Omega^{n}g + \Omega^{n}(fq)$ factors through a PGF module for $n \gg 0$. We conclude that $\Omega^{n}\Lambda$ is a direct summand of a PGF module and hence it is also PGF for $n \gg 0$ (see \cite[Proposition 2.3]{DE}). It follows that
	$\mbox{PGF-dim}_{R}\Lambda \leq n$ for $n \gg 0$ and the short exact sequence
	$0 \rightarrow M \stackrel{\jmath}{\rightarrow} \Lambda
	\stackrel{q}{\rightarrow} L \rightarrow 0 $
	then implies that $\mbox{PGF-dim}_{R}M \leq n$ (see \cite[Proposition 2.4]{DE}).\end{proof}

\begin{Proposition}\label{prop62}{\rm(\cite[Proposition 6.2]{Em-Ta})} Let $R$ be a commutative ring and consider an $R$-module $\Lambda$ and a split monomorphism of $R$-modules $\iota:R\rightarrow \Lambda$. We also consider the tensor powers $\Lambda^{\otimes n}$, $n\geq 1$, and define the direct system of $R$-modules and homomorphisms $$R\rightarrow \Lambda \rightarrow \Lambda^{\otimes 2} \rightarrow \cdots \rightarrow \Lambda^{\otimes n} \rightarrow \cdots,$$ with structural maps $\iota\otimes 1_{\Lambda^{\otimes n}}:\Lambda^{\otimes n}=R\otimes_R \Lambda^{\otimes n}\rightarrow \Lambda \otimes_R \Lambda^{\otimes n}=\Lambda^{\otimes n+1}$, for every $n\geq 0$. Let $\Lambda'$ be the direct limit of this direct system. Then:
	\begin{itemize}
		\item[(i)]The canonical map $\iota ' :R\rightarrow \Lambda'$ is a split monomorphism.
		\item[(ii)]If the module $\Lambda$ is projective, then $\Lambda '$ is projective as well.
	\end{itemize}
	\end{Proposition}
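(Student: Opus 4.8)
The plan is to treat the two parts separately: for (i) I will write down an explicit retraction of $\iota'$, and for (ii) I will reduce to a general fact about direct limits taken along split monomorphisms.

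\emph{Part (i).} Fix a retraction $\pi\colon\Lambda\to R$ of $\iota$, so that $\pi\iota=1_R$. For each $n\geq 0$ I would define the $R$-linear map $\rho_n\colon\Lambda^{\otimes n}\to R$ by $\rho_n(x_1\otimes\cdots\otimes x_n)=\pi(x_1)\cdots\pi(x_n)$ (product taken in $R$, with $\rho_0=1_R$ corresponding to the empty product); well-definedness is routine since $R$ is commutative. The crucial point is the compatibility $\rho_{n+1}\circ(\iota\otimes 1_{\Lambda^{\otimes n}})=\rho_n$ for every $n\geq 0$: the $n$-th structural map sends $x_1\otimes\cdots\otimes x_n$, viewed in $R\otimes_R\Lambda^{\otimes n}$, to $\iota(1)\otimes x_1\otimes\cdots\otimes x_n$, and $\pi(\iota(1))=1$. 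Hence the family $(\rho_n)_n$ is a cocone and induces an $R$-linear map $\rho'\colon\Lambda'\to R$ on the direct limit. Since the canonical map $\iota'$ is exactly the structural map out of the initial term $\Lambda^{\otimes 0}=R$, we get $\rho'\iota'=\rho_0=1_R$, so $\iota'$ is a split monomorphism.

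\emph{Part (ii).} First, each structural map $\iota\otimes 1_{\Lambda^{\otimes n}}\colon\Lambda^{\otimes n}\to\Lambda^{\otimes n+1}$ is itself a split monomorphism, with retraction $\pi\otimes 1_{\Lambda^{\otimes n}}$. Second, a tensor power over the commutative ring $R$ of the projective module $\Lambda$ is again projective, so every $\Lambda^{\otimes n}$ is projective. It then suffices to invoke the general principle that the direct limit of a sequence $M_0\to M_1\to\cdots$ of projective modules with split injective transition maps is projective: writing $C_n$ for the kernel of the chosen retraction $M_{n+1}\to M_n$, each $C_n$ is a direct summand of the projective module $M_{n+1}$ and hence projective; an immediate induction identifies $M_n$ with $M_0\oplus C_0\oplus\cdots\oplus C_{n-1}$ with the transition maps becoming the obvious inclusions, and therefore the direct limit is $M_0\oplus\bigoplus_{n\geq 0}C_n$, a direct sum of projective modules. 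In our case $C_n\cong L\otimes_R\Lambda^{\otimes n}$ where $L=\ker\pi$ (so that $\Lambda\cong R\oplus L$), whence concretely $\Lambda'\cong R\oplus\bigoplus_{n\geq 0}(L\otimes_R\Lambda^{\otimes n})$, which is projective.

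\emph{Main obstacle.} There is no genuine difficulty here; the two points that deserve a moment of care are the bookkeeping behind the identity $\rho_{n+1}\circ(\iota\otimes 1_{\Lambda^{\otimes n}})=\rho_n$, which rests entirely on $\pi\iota=1_R$, and the observation that the description of the direct limit in part (ii) as a direct sum of cokernels needs no compatibility among the successive retractions: once $M_n$ has been identified with $M_0\oplus\bigoplus_{k<n}C_k$, the next transition map is forced to be the canonical split inclusion.
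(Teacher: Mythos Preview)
Your argument is correct. Note, however, that the paper does not supply its own proof of this proposition: it is quoted verbatim from \cite[Proposition~6.2]{Em-Ta} and used as a black box in the proof of Theorem~\ref{theo43}. So there is no in-paper proof to compare against; your write-up simply fills in what the paper takes for granted.

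That said, your approach matches the standard one. In part~(i) the retraction $\rho_n(x_1\otimes\cdots\otimes x_n)=\pi(x_1)\cdots\pi(x_n)$ is exactly the natural choice, and your verification of the cocone condition is fine. In part~(ii) the key observation---that a sequential colimit along split monomorphisms of projectives decomposes as the direct sum of the successive cokernels---is the expected route, and your inductive identification $M_n\cong M_0\oplus C_0\oplus\cdots\oplus C_{n-1}$ is carried out correctly. One small stylistic remark: the closing comment that ``no compatibility among the successive retractions'' is needed is accurate but perhaps overstated as an obstacle, since the inductive isomorphism is built from the retractions one step at a time and the transition maps are automatically the inclusions by construction; there is nothing subtle hiding there.
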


In the next theorem we give a characterization of the finiteness of ${\widetilde{\textrm{Gcd}}_{R}G}$ using the complete cohomology of the group $G$. 

\begin{Theorem}\label{theo43}Let $G$ be a group and $R$ be a commutative ring such that $\textrm{spli}R<\infty$. The following are equivalent:
	\begin{itemize}
		\item[(i)] ${\widetilde{\textrm{Gcd}}_{R}G}<\infty$.
		\item[(ii)] There exists an $R$-split monomorphism of $RG$-modules
		$\iota: R \rightarrow \Lambda$, where $\Lambda$ is $R$-projective, such that the image of $\iota \in \textrm{Hom}_{RG}(R,\Lambda)=\textrm{H}^{\,0} (G,\Lambda)$ vanishes in the group $\widehat{\textrm{Ext}}^{0}_{RG}(R,\Lambda)={\widehat{\textrm{H}}}^{\,0}(G,\Lambda)$.
		\item[(iii)]There exists a characteristic module for $G$ over $R$.
		\item[(iv)] $\textrm{silp}(RG)=\textrm{spli}(RG)< \infty$.
	\end{itemize}
\end{Theorem}

\begin{proof}$(i)\Rightarrow (ii):$ Since ${\widetilde{\textrm{Gcd}}_{R}G}<\infty$, invoking Theorem \ref{Theor61} and its proof, we infer that there exists a short exact sequence of $RG$-modules
	$0 \rightarrow R \stackrel{\iota}{\rightarrow} \Lambda
	\stackrel{q}{\rightarrow} L \rightarrow 0$,
	where $L$ is PGF and $\textrm{pd}_{RG} \Lambda <\infty$, such that the image of
	$\iota \in \textrm{Hom}_{RG}(R,\Lambda)=\textrm{H}^{0} (G,\Lambda)$ vanishes in the group
	$\widehat{\textrm{Ext}}^{0}_{RG}(R,\Lambda)=\widehat{\textrm{H}}^{0} (G,\Lambda)$. By Lemma \ref{lem1}, we infer that $L$ is PGF as $R$-module, and hence we have $\textrm{Ext}^1_R(L,R)=0$. Thus, the exact sequence $0 \rightarrow R \stackrel{\iota}{\rightarrow} \Lambda
	\stackrel{q}{\rightarrow} L \rightarrow 0$ is $R$-split. Moreover the finiteness of $\textrm{pd}_{R}\Lambda$ yields the finiteness of $\textrm{pd}_{R}L$ and so $L$ is $R$-projective (see \cite[Corollary 3.7(i)]{DE}). Consequently, the $R$-module $\Lambda$ is projective as well.
	
	$(ii)\Rightarrow (iii):$ Let $\textbf{P}$ be a projective resolution of the $RG$-module $R$ and $R_i=\Omega^i R$ be the corresponding syzygy modules of $R$, $i\geq 0$. Then, $\textbf{P}\otimes_R \Lambda$ is a projective resolution of $\Lambda$ with corresponding syzygies the $RG$-modules $R_i \otimes_R \Lambda$, $i\geq 0$. Lifting the $RG$-linear map $\iota$ we obtain a chain map $1\otimes \iota: \textbf{P}\rightarrow \textbf{P}\otimes_R \Lambda$. Thus, the vanishing of the image of $\iota$ under the canonical map $\textrm{H}^{0} (G,\Lambda)\rightarrow \widehat{\textrm{H}}^{0} (G,\Lambda)$ yields the existence of a nonnegative integer $m$ such that the map $1\otimes \iota: R_m \rightarrow R_m \otimes_R \Lambda $ factors through a projective $RG$-module $P$. Then, for every $RG$-module $K$, the map $1\otimes \iota \otimes 1 : R_m \otimes_R K = R_m \otimes_R R\otimes_R K \rightarrow R_m \otimes_R \Lambda \otimes_R K$ factors through the $RG$-module $P\otimes_R K$. We consider the direct system of the $RG$-modules $(\Lambda^{\otimes n})_n$ with structural maps $\iota \otimes 1_{\Lambda^{\otimes n}}:\Lambda^{\otimes n}=R\otimes_R \Lambda^{\otimes n}\rightarrow \Lambda^{\otimes n+1}$ for every $n\geq 0$ and let $\Lambda '$ be its direct limit. Using Proposition \ref{prop62}, we infer that the $RG$-module $\Lambda'$ is $R$-projective and the canonical map $\iota':R\rightarrow \Lambda'$ is an $R$-split monomorphism. The definition of $\Lambda'$ yields the existence of an isomorphism of $RG$-modules $\tau : \Lambda \otimes_R \Lambda' \rightarrow \Lambda'$, such that the composition $\Lambda'=R\otimes_R \Lambda' \xrightarrow{\iota\otimes 1} \Lambda \otimes_R \Lambda ' \xrightarrow{\tau} \Lambda'$ is the identity map of $\Lambda'$. Since the map $1\otimes \iota \otimes 1: R_m\otimes_R \Lambda' =R_m \otimes_R R \otimes_R \Lambda' \rightarrow R_m \otimes_R \Lambda \otimes_R \Lambda'$ factors through the $RG$-module $P\otimes_R \Lambda'$, we infer that the identity map of $R_m\otimes_R \Lambda'$ factors through $P\otimes_R \Lambda'$ as well. Since the $RG$-module $\Lambda'$ is $R$-projective, we obtain that the $RG$-module $P\otimes_R \Lambda'$ is projective, and hence the $RG$-module $R_m\otimes_R \Lambda'$ is also projective. Thus, the exact sequence $$0\rightarrow R_m \otimes_R \Lambda' \rightarrow P_{m-1}\otimes_R \Lambda' \rightarrow \cdots \rightarrow P_0\otimes_R \Lambda'\rightarrow R\otimes_R \Lambda'=\Lambda'\rightarrow 0$$ is a projective resolution of the $RG$-module $\Lambda'$ of length $m$. Then, $\textrm{pd}_{RG}\Lambda'\leq m$ is finite and $\Lambda'$ is a characteristic module for $G$ over $R$.
	
	$(iii)\Rightarrow (i):$ This follows from Corollary \ref{cor2}. 
	
	$(i)\Leftrightarrow (iv):$ This follows from Theorem \ref{theorr}.\end{proof}

\begin{Remark}\rm We note that the conditions (i), (ii) and (iii) of Theorem \ref{theo43} are also equivalent over any commutative ring such that $\textrm{sfli}R<\infty$. \end{Remark}

\section{Groups of type FP$_{\infty}$ and extensions}

In this section we study the special case where the group $G$ is of type FP$_{\infty}$ and has finite PGF dimension. We will show that these groups have characteristic modules of type FP. Moreover, we provide an analogue of Fel'dman's theorem \cite{Fe} concerning PGF dimensions of groups.

\subsection{Groups of type FP$_{\infty}$ with finite ${\widetilde{\textrm{Gcd}}}$ and characteristic modules}Consider a commutative ring $R$ of finite Gorenstein weak global dimension and a group $G$ of type FP$_{\infty}$ which has finite PGF dimension. The goal of this subsection is to prove the existence of a characteristic module for $G$ over $R$ of type FP.
\begin{Lemma}\label{lem71} Let $R$ be a commutative ring, $G$ a group and assume that there exists an
	$RG$-module $\Lambda$, which admits an $R$-split $RG$-linear monomorphism $\iota : R \rightarrow \Lambda$. We also consider a complex of $RG$-modules
	$\textbf{M}$ and assume that the induced complex of diagonal $RG$-modules
	$\textbf{M}\otimes_R \Lambda$ is contractible. Then, for every injective $RG$-module $I$ the induced complex $I\otimes_{RG}\textbf{M}$ is acyclic.
\end{Lemma}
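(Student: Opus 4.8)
The plan is to realize $I\otimes_{RG}\textbf{M}$ as a direct summand, in the category of complexes of abelian groups, of a complex that is visibly acyclic. Write $L=\textrm{Coker}\,\iota$, so that the $R$-split hypothesis on $\iota$ gives an $R$-split short exact sequence of $RG$-modules
$$0\rightarrow R\stackrel{\iota}{\rightarrow}\Lambda\rightarrow L\rightarrow 0.$$
First I would tensor this sequence over $R$ with the given injective $RG$-module $I$, equipping the tensor products with the diagonal $G$-action. Since the sequence is $R$-split, tensoring preserves exactness, and we obtain a short exact sequence of $RG$-modules $0\rightarrow I\rightarrow\Lambda\otimes_R I\rightarrow L\otimes_R I\rightarrow 0$. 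As $I$ is injective as an $RG$-module, the monomorphism $I\rightarrow\Lambda\otimes_R I$ splits, so $I$ is a direct summand of $\Lambda\otimes_R I$ in the category of $RG$-modules.

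Applying the additive functor $\_\!\_\otimes_{RG}\textbf{M}$ then exhibits $I\otimes_{RG}\textbf{M}$ as a direct summand of the complex of abelian groups $(\Lambda\otimes_R I)\otimes_{RG}\textbf{M}$. Here I would use the natural ``untwisting'' isomorphism of complexes $(\Lambda\otimes_R I)\otimes_{RG}\textbf{M}\cong(\textbf{M}\otimes_R\Lambda)\otimes_{RG}I$ already employed in the proof of Proposition \ref{prop1}; it is obtained by identifying both sides with the $G$-coinvariants of the complex $\textbf{M}\otimes_R\Lambda\otimes_R I$ carrying the diagonal action, using only the commutativity and associativity of $\otimes_R$ (no projectivity or flatness is needed).

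Finally, the hypothesis that $\textbf{M}\otimes_R\Lambda$ is contractible (as a complex of $RG$-modules) is preserved by the additive functor $\_\!\_\otimes_{RG}I$, so $(\textbf{M}\otimes_R\Lambda)\otimes_{RG}I$ is contractible, hence acyclic; transporting along the isomorphism above, $(\Lambda\otimes_R I)\otimes_{RG}\textbf{M}$ is acyclic, and a direct summand of an acyclic complex of abelian groups is acyclic. Therefore $I\otimes_{RG}\textbf{M}$ is acyclic. The only step that requires genuine care is verifying that the untwisting map is an isomorphism of complexes --- i.e.\ that it is compatible with the differentials --- while the $R$-splitting, the use of injectivity of $I$, and the transport of contractibility through an additive functor are all routine.
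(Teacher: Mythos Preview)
Your proof is correct and follows essentially the same route as the paper's: both introduce $L=\textrm{Coker}\,\iota$, split $I$ off from $\Lambda\otimes_R I$ using injectivity, and then reduce to the contractibility of $(\textbf{M}\otimes_R\Lambda)\otimes_{RG}I$ via the untwisting isomorphism. The only difference is that you spell out a few of the routine justifications more explicitly than the paper does.
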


\begin{proof} Let $L=\textrm{Coker}\iota$ and consider the $R$-split short exact sequence
	of $RG$-modules $0\rightarrow R \xrightarrow{\iota} \Lambda \rightarrow L \rightarrow 0.$
	We also consider an injective $RG$-module $I$ and the induced short exact sequence of
	$RG$-modules $0\rightarrow I \rightarrow \Lambda\otimes_{R}I \rightarrow L\otimes_{R}I\rightarrow 0.$
	Since the $RG$-module $I$ is injective, the exact sequence above splits and hence $I$ is
	a direct summand of $\Lambda\otimes_{R}I$. The acyclicity of the complex $I\otimes_{RG}\textbf{M}$ will therefore follow if we show the acyclicity of the complex $ (\Lambda\otimes_{R}I)\otimes_{RG}\textbf{M} \cong I\otimes_{RG}(\textbf{M}\otimes_{R} \Lambda)$. But the complex $\textbf{M}\otimes_R\Lambda$ is contractible by hypothesis and hence $(\Lambda\otimes_{R}I)\otimes_{RG}\textbf{M}$	is contractible as well. We conclude that the complex $I\otimes_{RG}\textbf{M}$ is acyclic.
\end{proof}

\begin{Lemma}\label{prop72}
Let $R$ be a commutative ring, $G$ be a group and $K$ be a finitely generated $RG$-module. We assume that there exists an $R$-projective $RG$-module $\Lambda$, which admits an $R$-split
$RG$-linear monomorphism $\iota: R \rightarrow \Lambda$ and is such that the diagonal $RG$-module $K\otimes_R \Lambda$ is projective. Then:
\begin{itemize}
	\item[(i)] There exists an exact sequence of $RG$-modules $$0\rightarrow K \rightarrow T_{-1} \rightarrow T_{-2} \rightarrow \cdots ,$$ where $T_i$ is finitely generated free and the image $K_i = \textrm{Im} (T_i \rightarrow T_{i-1})$ is such that the
	diagonal $RG$-module $ K_i\otimes_R\Lambda $ is projective for every $i \leq -1$.
	\item[(ii)]Every projective resolution $\textbf{P}$ of the $RG$-module $K$
	$$\textbf{P}=\cdots \rightarrow  P_m \rightarrow  P_{m-1} \rightarrow  \cdots \rightarrow  P_0 \rightarrow K \rightarrow 0$$
	may be completed to an acyclic complex 
	$$\textbf{{T}} = \cdots \rightarrow  P_m \rightarrow P_{m-1} \rightarrow \cdots \rightarrow P_0 \rightarrow T_{-1} \rightarrow T_{-2} \rightarrow \cdots $$
	with $K = \textrm{Im} (P_0 \rightarrow T_{-1})$, such that the complex $I\otimes_{RG}\textbf{{T}}$ is acyclic for every injective $RG$-module $I$ and $T_i$ is a finitely generated free $RG$-module for every $i\leq -1$.
\end{itemize} 
\end{Lemma}

\begin{proof}For the proof of (i), see \cite[Proposition 2.2 (ii)]{ET2}.
	
(ii) Splicing any projective resolution $\textbf{P}$ of $K$ with the
exact sequence of (i), we obtain an acyclic complex of projective $RG$-modules
$$\textbf{T}= \cdots \rightarrow P_m \rightarrow P_{m-1}\rightarrow \cdots \rightarrow P_0 \rightarrow T_{-1} \rightarrow T_{-2}\rightarrow \cdots$$ where $K=\textrm{Im}(P_0 \rightarrow T_{-1})$, such that $T_i$ is a finitely generated free $RG$-module for every $i\leq -1$. Since $\Lambda$ is an $R$-projective $RG$-module, the induced exact sequence of diagonal $RG$-modules $\textbf{T}\otimes_R\Lambda $ is also acyclic. The syzygies of $\textbf{T}\otimes_R\Lambda $ are the diagonal $RG$-modules $ K_i\otimes_R \Lambda $, where $K_i=\textrm{Im}(T_i\rightarrow T_{i-1})$, $i\in \mathbb{Z}$. By (i), the diagonal $RG$-modules $ K_i\otimes_R \Lambda $ are projective for every $i\leq -1$. We obtain that the complex $\textbf{T}\otimes_R\Lambda $ is contractible. Invoking Lemma \ref{lem71}, we infer that the complex $I\otimes_{RG}\textbf{T} $ is acyclic for every injective $RG$-module $I$.
\end{proof}

\begin{Proposition}\label{theo73}Let $R$ be a commutative ring such that $\textrm{sfli}R$ is finite. We also
	consider a group $G$ with ${\widetilde{\textrm{Gcd}}_{R}G}<\infty$ and an $R$-projective $RG$-module $M$ of type $FP_n$, where $n\geq {\widetilde{\textrm{Gcd}}_{R}G}$. Then, there exists an acyclic complex $\textbf{T}$ of projective $RG$-modules, such that $\textbf{T}$ coincides with a projective resolution of $M$ for every $i\geq n$, the $RG$-modules $T_i$ are finitely generated for every $i\leq n-1$ and the complex $I\otimes_{RG}\textbf{T}$
	is acyclic for every injective $RG$-module $I$.
\end{Proposition}

\begin{proof}Since $M$ is of type $FP_n$, there exists a projective resolution $\textbf{P}$ of $M$, such that the $RG$-module $K_n=\textrm{Im}(P_n\rightarrow P_{n-1})$ is finitely generated. Since $\textrm{sfli}R<\infty$ and ${\widetilde{\textrm{Gcd}}_{R}G}\leq n <\infty$, there exists
	an $R$-projective $RG$-module $\Lambda$ with $\textrm{pd}_{RG}\Lambda={\widetilde{\textrm{Gcd}}_{R}G}\leq n $ , which admits an $R$-split $RG$-linear monomorphism $\iota: R \rightarrow \Lambda$ (see Corollary \ref{cor1}). Since $\Lambda$ is $R$-projective, we obtain that $\textbf{P}\otimes_{R}\Lambda$ is a projective resolution of the $RG$-module $M\otimes_R \Lambda$. Moreover, the $RG$-module $M$ is $R$-projective and hence $\textrm{pd}_{RG}(M\otimes_R \Lambda)\leq \textrm{pd}_{RG}\Lambda \leq n$. Therefore, the $n$-th syzygy module $K_n \otimes_R \Lambda = \textrm{Im} (P_n \otimes_R \Lambda \rightarrow P_{n-1} \otimes_R \Lambda)$ of $\textbf{P}\otimes_{R}\Lambda$, is $RG$-projective.
	Since $K_n$ is a finitely generated $RG$-module with projective resolution $$\cdots \rightarrow P_m \rightarrow P_{m-1} \rightarrow \cdots \rightarrow P_n \rightarrow K_n \rightarrow 0$$
	and $K_n \otimes_R \Lambda$ is $RG$-projective, applying Lemma \ref{prop72}(ii) we obtain that there exists an acyclic complex of projective $RG$-modules $$\textbf{T}=\cdots \rightarrow P_m\rightarrow P_{m-1}\rightarrow \cdots \rightarrow P_n \rightarrow T_{n-1}\rightarrow T_{n-2}\rightarrow \cdots,$$
	where $K_n = \textrm{Im} (P_n \rightarrow T_{n-1})$, such that the $RG$-modules $T_i$ are finitely generated for every $i\leq n-1$ and the complex $I\otimes_{RG} \textbf{T}$
	is acyclic for every injective $RG$-module $I$.\end{proof}

The following corollary is an immediate consequence of Proposition \ref{theo73} and its proof.

\begin{Corollary}Let $R$ be a commutative ring such that $\textrm{sfli}R$ is finite. We also
	consider a group $G$ with ${\widetilde{\textrm{Gcd}}_{R}G}=n<\infty$ and an $R$-projective $RG$-module $M$ of type $FP_{\infty}$. Then, for every projective resolution $\textbf{P}$ of $M$, which consists of finitely generated free $RG$-modules in each degree, there exists an acyclic complex $\textbf{T}$ of finitely generated free $RG$-modules, such that $\textbf{T}$ coincides with $\textbf{P}$ for every $i\geq n$, and the complex $I\otimes_{RG}\textbf{T}$ is acyclic for every injective $RG$-module $I$.
\end{Corollary}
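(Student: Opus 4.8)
The plan is to invoke Theorem \ref{theo73} and run through its proof, taking care to start from the prescribed resolution $\textbf{P}$ rather than from an arbitrary one, and to record that the modules produced in degrees below $n$ come out finitely generated \emph{free}, not merely finitely generated projective.

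First I would observe that since $M$ is of type $FP_{\infty}$ it is in particular of type $FP_n$ for $n={\widetilde{\textrm{Gcd}}_{R}G}$, so the hypotheses of Theorem \ref{theo73} are met for this value of $n$. Given the resolution $\textbf{P}$ whose terms are finitely generated free $RG$-modules, the syzygy $K_n=\textrm{Im}(P_n\rightarrow P_{n-1})$ is a quotient of the finitely generated module $P_n$, hence finitely generated; thus $\textbf{P}$ itself can serve as the projective resolution chosen at the beginning of the proof of Theorem \ref{theo73}.

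Next I would reproduce that proof with this choice. Using Corollary \ref{cor1} one obtains an $R$-projective $RG$-module $\Lambda$ with $\textrm{pd}_{RG}\Lambda={\widetilde{\textrm{Gcd}}_{R}G}\leq n$ admitting an $R$-split monomorphism $\iota:R\rightarrow\Lambda$; since $\Lambda$ is $R$-projective, $\textbf{P}\otimes_R\Lambda$ is a projective resolution of $M\otimes_R\Lambda$ and $\textrm{pd}_{RG}(M\otimes_R\Lambda)\leq n$, so the $n$-th syzygy $K_n\otimes_R\Lambda$ is $RG$-projective. Applying Proposition \ref{prop72}(ii) to the finitely generated module $K_n$ together with the projective resolution $\cdots\rightarrow P_{n+1}\rightarrow P_n\rightarrow K_n\rightarrow 0$ (which is the tail of $\textbf{P}$) yields an acyclic complex $\textbf{T}$ of projective $RG$-modules that agrees with $\textbf{P}$ in degrees $\geq n$, has $T_i$ a finitely generated free $RG$-module for every $i\leq n-1$ (the freeness is part of the conclusion of Proposition \ref{prop72}, via the construction in Proposition \ref{prop72}(i)), and is such that $I\otimes_{RG}\textbf{T}$ is acyclic for every injective $RG$-module $I$.

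Finally I would note that in degrees $\geq n$ the complex $\textbf{T}$ coincides with $\textbf{P}$, whose terms are finitely generated free by hypothesis, while in degrees $\leq n-1$ the terms are finitely generated free by the previous step; hence $\textbf{T}$ consists of finitely generated free $RG$-modules in every degree, as required. There is essentially no obstacle here beyond the bookkeeping: the only points demanding attention are that the prescribed resolution $\textbf{P}$ can indeed be fed into the machinery of Theorem \ref{theo73}, and that the auxiliary modules built below degree $n$ via Proposition \ref{prop72} are free.
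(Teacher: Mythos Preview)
Your proposal is correct and follows essentially the same approach as the paper: feed the given resolution $\textbf{P}$ into the construction underlying Theorem \ref{theo73}, then observe that the terms in degrees $\geq n$ are finitely generated free because they agree with $\textbf{P}$, while the terms in degrees $\leq n-1$ are finitely generated free because that is what Proposition \ref{prop72} actually produces. If anything, you are more explicit than the paper, which simply cites Theorem \ref{theo73} for the low-degree freeness even though the statement of that theorem only records finite generation; the freeness is, as you note, contained in Proposition \ref{prop72}(i),(ii).
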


\begin{proof}We consider a projective resolution $\textbf{P}$ of $M$, consisting of finitely generated free $RG$-modules in each degree. Then, the acyclic complex $\textbf{T}$ of Proposition \ref{theo73} coincides with $\textbf{P}$ for every $i\geq n$, and consists of finitely generated free $RG$-modules in each degree. Indeed, in degrees $\geq n$, this follows since $\textbf{P}$ consists of finitely generated free $RG$-modules, while in degrees $\leq n-1$ this follows from Proposition \ref{theo73}. Moreover, the complex $I\otimes_{RG} \textbf{T}$ is acyclic for every injective $RG$-module $I$.\end{proof}

\begin{Corollary}\label{cor75}
Let $R$ be a commutative ring such that $\textrm{sfli}R$ is finite. We also
consider a group $G$ of type $FP_{\infty}$ with ${\widetilde{\textrm{Gcd}}_{R}G}=n<\infty$. Then, for every projective resolution $\textbf{P}$ of $G$, which consists of finitely generated free $RG$-modules in each degree, there exists an acyclic complex $\textbf{T}$ of finitely generated free $RG$-modules, such that $\textbf{T}$ coincides with $\textbf{P}$ for every $i\geq n$, and the complex $I\otimes_{RG}\textbf{T} $ is acyclic for every injective $RG$-module $I$.
\end{Corollary}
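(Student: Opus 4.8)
The plan is to obtain this statement as a direct specialization of the preceding Corollary to the trivial module. The first step is to verify that $M=R$, regarded as the trivial $RG$-module, satisfies the hypotheses of that Corollary. The assumption $\textrm{sfli}R<\infty$ is already in force. Next, by the very definition ${\widetilde{\textrm{Gcd}}_{R}G}=\textrm{PGF-dim}_{RG}R$, the hypothesis ${\widetilde{\textrm{Gcd}}_{R}G}=n<\infty$ says exactly that the trivial module $R$ has PGF dimension $n$ over $RG$. Finally, $R$ is $R$-projective (it is $R$-free of rank one), and since $G$ is of type $FP_{\infty}$ over $R$ the trivial $RG$-module $R$ is of type $FP_{\infty}$ -- this is precisely the definition of a group of type $FP_{\infty}$ over $R$.

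The second step is then to apply the preceding Corollary with $M=R$: given any projective resolution $\textbf{P}$ of the trivial $RG$-module $R$ by finitely generated free $RG$-modules, it yields an acyclic complex $\textbf{T}$ of finitely generated free $RG$-modules which coincides with $\textbf{P}$ in degrees $\geq n$ and is such that $I\otimes_{RG}\textbf{T}$ is acyclic for every injective $RG$-module $I$. Since a projective resolution of the group $G$ over $R$ is, by definition, a projective resolution of the trivial $RG$-module $R$, this is exactly the assertion of Corollary \ref{cor75}.

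Because the whole argument is a specialization, there is no genuine obstacle: all the substance is already contained in Theorem \ref{theo73} and in the Corollary preceding the present statement, which in turn rests on Proposition \ref{prop72} and Lemma \ref{lem71}. The only points deserving a brief word are the identification of a projective resolution of $G$ with a projective resolution of the trivial $RG$-module $R$, and the trivial remark that $R$ is $R$-projective, so that the hypothesis on $M$ in that Corollary is met.
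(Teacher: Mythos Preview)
Your proposal is correct and matches the paper's approach: the paper states Corollary~\ref{cor75} without proof, treating it as the immediate specialization of the preceding corollary (for a general $R$-projective $RG$-module $M$ of type FP$_{\infty}$) to the case $M=R$, exactly as you do. Your verification that the trivial module $R$ satisfies the hypotheses---$R$-projectivity, type FP$_{\infty}$, and finite PGF dimension equal to $\widetilde{\textrm{Gcd}}_{R}G$---is precisely what is needed to make the specialization go through.
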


We will now show that a group of type FP$_{\infty}$
with finite PGF dimension has a characteristic module of type FP.

\begin{Theorem}\label{theo76}Let $R$ be a commutative ring such that $\textrm{sfli}R <\infty$ and $G$ be a group of type FP$_{\infty}$ over $R$ with ${\widetilde{\textrm{Gcd}}_{R}G}<\infty$. Then, there exists a characteristic module for $G$ over $R$ of type FP.
\end{Theorem}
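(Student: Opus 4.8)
\noindent The plan is to reduce the statement to the construction of a single characteristic module for $G$ over $R$ which is of type FP; by Corollary~\ref{cor37} any such module automatically has projective dimension $n:={\widetilde{\textrm{Gcd}}_{R}G}$. Since $G$ is of type FP$_{\infty}$ over $R$, fix a resolution $\textbf{P}\colon\cdots\to P_1\to P_0\to R\to 0$ of the trivial $RG$-module $R$ by finitely generated free $RG$-modules; it is $R$-split, so each syzygy $\Omega^{i}R$ is finitely generated and $R$-projective, and $0\to\Omega^{n}R\to P_{n-1}\to\cdots\to P_0\to R\to 0$ is $R$-split exact.

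First I would record a characteristic module of type FP for the $n$-th syzygy $\Omega^{n}R$. By Corollary~\ref{cor1} there is a characteristic module $\Lambda$ for $G$ over $R$ with $\textrm{pd}_{RG}\Lambda=n$; as $\Lambda$ is $R$-projective, $\textbf{P}\otimes_{R}\Lambda$ is a projective resolution of $\Lambda$ by Remark~\ref{rem1}, so its $n$-th syzygy $\Omega^{n}R\otimes_{R}\Lambda$ is $RG$-projective. Applying Proposition~\ref{prop72}(i) to $K=\Omega^{n}R$ and this $\Lambda$ (and invoking Corollary~\ref{cor75}) one obtains a complete resolution $\textbf{T}$ of $R$ of finite type whose ``coresolution part'' is an exact sequence $0\to\Omega^{n}R\to T_{-1}\to T_{-2}\to\cdots$ with the $T_{i}$ finitely generated free; the cosyzygies are PGF $RG$-modules, hence PGF $R$-modules by Lemma~\ref{lem1}, so $\textrm{Ext}^{1}_{R}(K_{i},\Omega^{n}R)=0$ for every cosyzygy $K_i$ (using \cite[Proposition~3.6]{DE} and the $R$-projectivity of $\Omega^{n}R$), and the coresolution is therefore $R$-split. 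Thus $T_{-1}$ is a finitely generated free $RG$-module admitting an $R$-split $RG$-linear monomorphism from $\Omega^{n}R$. When $n=0$ this already proves the theorem, since then $\Omega^{0}R=R$.

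It remains, for $n\geq 1$, to descend such a module for $\Omega^{n}R$ to one for $R$ along the $R$-split exact sequence $0\to\Omega^{n}R\to P_{n-1}\to\cdots\to P_0\to R\to 0$, keeping it finitely generated, $R$-projective and of finite projective dimension. \emph{This is the crux, and the only delicate point.} A naive iterated pushout along that sequence is finitely generated and $R$-projective, but it turns out to be an extension of a PGF module by a projective one --- which splits, since $\textrm{Ext}^{1}_{RG}(-,Q)$ vanishes on PGF modules for every projective $Q$ --- hence of infinite projective dimension, and moreover it carries $R$ only as a quotient, not as a submodule. The way around this is to run the descent using the finite projective dimension of $\Lambda$: one works with the $RG$-projective modules $\Omega^{n}R\otimes_{R}\Lambda$ and $K_{i}\otimes_{R}\Lambda$ rather than with $\Omega^{n}R$ and the $K_{i}$ themselves, and reconstructs $R$ as a submodule by the diagonal-tensor mechanism used in the proof of Proposition~\ref{prop1}, splicing $R\xrightarrow{\iota}\Lambda$ with the finite data; the requisite finiteness of projective dimension is then supplied exactly as in Theorem~\ref{theo43} and Theorem~\ref{Theor61}, i.e. by the triviality of a complete-cohomology class. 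The module $\Lambda^{\mathrm{fin}}$ produced in this way is finitely generated, $R$-projective and of finite $RG$-projective dimension --- hence of type FP --- and admits an $R$-split $RG$-linear monomorphism from $R$, so it is a characteristic module for $G$ over $R$ of type FP.
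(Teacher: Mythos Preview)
Your setup through the construction of the complete resolution $\textbf{T}$ is fine and matches the paper's. The gap is precisely where you say ``this is the crux'': your final paragraph does not actually construct the module $\Lambda^{\mathrm{fin}}$. You invoke the diagonal-tensor mechanism of Proposition~\ref{prop1} and the complete-cohomology argument of Theorem~\ref{theo43}, but both of those produce \emph{infinite} objects: Proposition~\ref{prop1} builds an unbounded right resolution $0\to M_n\to M_n\otimes_R\Lambda\to M_n\otimes_R L\otimes_R\Lambda\to\cdots$, and the proof of Theorem~\ref{theo43} builds $\Lambda'=\varinjlim_{k}\Lambda^{\otimes k}$, which is not finitely generated. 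Neither mechanism, as written, outputs a finitely generated module, and ``splicing $R\xrightarrow{\iota}\Lambda$ with the finite data'' is not a construction. So nothing in your sketch forces $\Lambda^{\mathrm{fin}}$ to be of type FP.

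The paper's approach supplies the missing idea concretely. Since the syzygies of $\textbf{T}$ are PGF, the complexes $\mathrm{Hom}_{RG}(\textbf{T},P_i)$ are acyclic, and one can extend the identity in degrees $\geq n$ to a chain map $\tau:\textbf{T}\to\textbf{P}$, surjective in each degree after adding a finitely generated contractible summand. Writing $M=\mathrm{Coker}(T_1\to T_0)$ and $L=\mathrm{Coker}(T_0\to T_{-1})$, the map $\tau$ induces a \emph{surjection} $M\twoheadrightarrow R$ with kernel $N$, and the kernels $Q_i=\ker(\tau_i)$ are finitely generated projectives giving a resolution $0\to Q_{n-1}\to\cdots\to Q_0\to N\to 0$. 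Now push out $M\hookrightarrow T_{-1}$ along $M\twoheadrightarrow R$: this yields $0\to R\to\Lambda\to L\to 0$ with $\Lambda\cong T_{-1}/N$, and splicing with the resolution of $N$ gives $0\to Q_{n-1}\to\cdots\to Q_0\to T_{-1}\to\Lambda\to 0$, so $\Lambda$ is of type FP with $\textrm{pd}_{RG}\Lambda\leq n$. The $R$-splitness and $R$-projectivity then follow from $L$ being PGF over $R$, exactly as you argue elsewhere. The point you are missing is this chain map $\tau$: it is what simultaneously produces a PGF module mapping \emph{onto} $R$ (so the pushout embeds $R$) and a finite projective resolution of the kernel (so the pushout has finite projective dimension and finite type).
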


\begin{proof}Let $\textbf{P}$ be a projective resolution of $G$ over $R$, which consists of finitely generated free modules in each degree. We also let ${\widetilde{\textrm{Gcd}}_{R}G}=n<\infty$. Invoking Corollary \ref{cor75}, we infer that there exists an acyclic complex $\textbf{T}$ of finitely generated free $RG$-modules, such that $\textbf{T}$ coincides with $\textbf{P}$ for every $i\geq n$, and the complex $I\otimes_{RG}\textbf{T}$ is acyclic for every injective $RG$-module $I$. Since all the syzygies of $\textbf{T}$ are PGF $RG$-modules, it follows from \cite[Corollary 4.5]{SS} that the complexes $\textrm{Hom}_{RG}(\textbf{T},P_i)$ are acyclic for every $i=0,1,\dots,n-1$, and hence the identity maps $T_i \rightarrow P_i$, $i\geq n$, extend to a chain map $\tau: \textbf{T}\rightarrow \textbf{P}$
$$\begin{array}{ccccccccccccccc}
	\cdots & \rightarrow & T_{n+1} & \rightarrow & T_n & \rightarrow & T_{n-1} & \rightarrow & \cdots & \rightarrow & T_0 & \rightarrow & T_{-1} & \rightarrow & \cdots \\
	& &  \parallel& & \parallel & & \downarrow  & & & &  \downarrow & &  \downarrow & & \\
	\cdots & \rightarrow & P_{n+1} & \rightarrow & P_n & \rightarrow & P_{n-1} & \rightarrow & \cdots & \rightarrow & P_0 & \rightarrow & 0 & \rightarrow & \cdots 
\end{array}$$
We may assume that the linear maps $\tau_i: T_i \rightarrow P_i$ are surjective for every $i<n$. Indeed, for every $i=0,1,\dots ,n-1$, we consider the contractible complex $\textbf{X}_i$, which consists of $P_i$ in degrees $i,i-1$ and $0$’s elsewhere with differential in degree $i$ given by
the identity map of $P_i$. Let $f_i : X_i \rightarrow P$ be the unique chain map whose component in degree $i$ is the identity map of $P_i$. Then, the direct sum $\textbf{T}'=\textbf{T}\oplus \textbf{X}_{n-1}\oplus \cdots \oplus \textbf{X}_{1} \oplus \textbf{X}_{0}$ is an acyclic complex of projective $RG$-modules which remains acyclic after applying the functor $\_\!\_\otimes_{RG}I$ for every injective $RG$-module $I$. Moreover the chain map $\textbf{T}' \rightarrow \textbf{P}$, which
is induced by $\tau$ and the $f_i$’s, $i = 0, 1, \dots, n - 1$, is surjective in degrees $\leq n-1$ and coincides with $\tau$ in degrees $\geq n$.

We consider the PGF $RG$-modules $K=\textrm{Coker}(T_{n+1}\rightarrow T_n)$, $M=\textrm{Coker}(T_{1}\rightarrow T_0)$ and $L=\textrm{Coker}(T_{0}\rightarrow T_{-1})$. Then, the chain map $\tau$ induces a commutative diagram 
$$\begin{array}{ccccccccccccccc}
	0 & \rightarrow & K & \rightarrow & T_{n-1} & \rightarrow & \cdots & \rightarrow & T_0 & \rightarrow & M & \rightarrow & 0 \\
	& & \parallel & & \downarrow  & & & &  \downarrow & &  \downarrow & & \\
	0 & \rightarrow & K & \rightarrow & P_{n-1} & \rightarrow & \cdots & \rightarrow & P_0 & \rightarrow & R & \rightarrow & 0 
\end{array}$$
with exact rows whose vertical maps are surjective. Letting $N=\textrm{Ker}(M\rightarrow R)$ and $Q_i=\textrm{Ker}(T_i\rightarrow P_i)$ for every $i=0,1,\dots, n-1$, we obtain the exact sequence 
\begin{equation}\label{eqqq}
	0\rightarrow Q_{n-1}\rightarrow \cdots \rightarrow Q_0 \rightarrow N \rightarrow 0.
\end{equation} Moreover, the projectivity of $P_i$ implies that $Q_i$ is a direct summand of $T_i$, and hence $Q_i$ is a finitely generated projective module for every $i=0,1,\dots,n-1$. 

We consider now the following pushout diagram:
$$\begin{array}{ccccccccc}
& & 0 & & 0 & & & & \\
& & \downarrow & & \downarrow & & & & \\
& & N & = & N & & & & \\
& & \downarrow & & \downarrow & & & & \\
0 & \rightarrow & M & \rightarrow & T_{-1} & \rightarrow & L & \rightarrow & 0\\
& & \downarrow & & \downarrow & & \parallel & & \\
0 & \rightarrow & R & \rightarrow & \Lambda & \rightarrow & L & \rightarrow & 0\\
& & \downarrow & & \downarrow & & & & \\
& & 0 & & 0 & & & &
\end{array}$$
Since the $RG$-module $L$ is PGF and $\textrm{sfli}R<\infty$, invoking Lemma \ref{lem1}, we infer that $L$ is PGF as $R$-module. Thus, $\textrm{Ext}^1_{R}(L,R)=0$ and the short exact sequence $0\rightarrow R \rightarrow \Lambda \rightarrow L\rightarrow 0$ is $R$-split. Moreover $\Lambda$ is PGF as $R$-module (see \cite[Proposition 2.3]{DE}). Splicing the exact sequence (\ref{eqqq}) with the short exact sequence $0\rightarrow N \rightarrow T_{-1}\rightarrow \Lambda \rightarrow 0$, we obtain an $RG$-projective resolution of $\Lambda$ $$0\rightarrow Q_{n-1}\rightarrow \cdots \rightarrow Q_0 \rightarrow T_{-1} \rightarrow \Lambda \rightarrow 0,$$ where the projective $RG$-modules $T_{-1}$ and $Q_i$, $i=0,1,\dots ,n-1$, are finitely generated. Hence, the $RG$-module $\Lambda$ is of type FP and $\textrm{pd}_{RG}\Lambda\leq n$. Since $\textrm{pd}_{R}\Lambda\leq\textrm{pd}_{RG}\Lambda\leq n$ is finite and $\Lambda$ is PGF as $R$-module, invoking \cite[Corollary 3.7(i)]{DE}, we infer that $\Lambda$ is $R$-projective. We conclude that $\Lambda$ is a characteristic module for $G$ over $R$ of type FP, as needed.\end{proof}



\subsection{PGF dimension of certain group extensions}Let $R$ be a commutative ring of finite Gorenstein weak global dimension. In this subsection, we provide an analogue of Fel'dman's theorem \cite{Fe} concerning PGF dimensions of groups. We will make use of the following result of \cite{ET2}.

\begin{Proposition}\label{prop611}{\rm(\cite[Corollary 4.4]{ET2})} Let $R$ be a commutative ring and consider an extension of groups $1\rightarrow N \rightarrow G \rightarrow Q \rightarrow 1$, where $N$ is a group of type FP$_{\infty}$ over $R$, and a nonnegative integer $i$.\begin{itemize}
		\item[(i)]If the $R$-module ${H}^i (N,RN)$ is projective, then the $RQ$-module ${H}^i (N,RG)$ is projective.
		\item[(ii)]If the $R$-module ${H}^i (N,RN)$ contains a copy of $R$ as a direct summand, then $RQ$-module ${H}^i (N,RG)$ contains a copy of $RQ$ as a direct summand.
	\end{itemize}
\end{Proposition}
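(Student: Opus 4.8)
The plan is to reduce the entire statement to a single structural isomorphism of $RQ$-modules and then read off (i) and (ii) formally. Concretely, I would prove that
$$H^{i}(N,RG)\;\cong\;RQ\otimes_{R}H^{i}(N,RN),$$
with $RQ$ acting on the left tensor factor and trivially on the right one. Granting this, (i) follows at once: if $H^{i}(N,RN)$ is a projective $R$-module it is a direct summand of a free module $R^{(S)}$, so $RQ\otimes_{R}H^{i}(N,RN)$ is a direct summand of $RQ\otimes_{R}R^{(S)}\cong(RQ)^{(S)}$ and hence is a projective $RQ$-module. Likewise (ii) follows: if $H^{i}(N,RN)\cong R\oplus M$ as $R$-modules, then $RQ\otimes_{R}H^{i}(N,RN)\cong RQ\oplus(RQ\otimes_{R}M)$ as $RQ$-modules, so $RQ$ is a direct summand of $H^{i}(N,RG)$.

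To establish the structural isomorphism I would proceed as follows. Fix a transversal $T$ of $N$ in $G$; then $RG=\bigoplus_{t\in T}RNt$ with each $RNt\cong RN$ as a left $RN$-module, so $RG\cong(RN)^{(Q)}$ as left $RN$-modules (identifying $T$ with $Q$). Since $N$ is of type FP$_{\infty}$ over $R$, the trivial $RN$-module $R$ admits a projective resolution $\textbf{P}$ by finitely generated projective $RN$-modules; therefore each functor $\textrm{Hom}_{RN}(P_{j},-)$ commutes with arbitrary direct sums, and since cohomology commutes with direct sums of complexes we obtain $H^{i}(N,RG)\cong\bigoplus_{Q}H^{i}(N,RN)$ as $R$-modules. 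It remains to identify the $RQ$-module structure on the left-hand side: since $N\trianglelefteq G$, the quotient $Q=G/N$ acts by conjugation on $H^{i}(N,M)$ for every $G$-module $M$, in particular on $H^{i}(N,RG)$ for the left regular module $RG$. A bookkeeping computation shows that under the decomposition above left multiplication by a transversal element permutes the summands $RNt$ exactly as $Q$ permutes itself by translation, while twisting the corresponding copies of $H^{i}(N,RN)$ by automorphisms induced from conjugation by elements of $G$. This twist is absorbed by the standard ``untwisting'' isomorphism $B\otimes_{R}RQ\cong B_{0}\otimes_{R}RQ$, $b\otimes q\mapsto q^{-1}b\otimes q$, where $B$ is an $RQ$-module acting diagonally on $B\otimes_{R}RQ$ and $B_{0}$ denotes its underlying $R$-module with trivial $Q$-action. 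This yields the displayed isomorphism, which is in essence \cite[Corollary 4.4]{ET2} and could also be invoked directly.

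I expect the main obstacle to be precisely this last point. The $R$-linear splitting $H^{i}(N,RG)\cong\bigoplus_{Q}H^{i}(N,RN)$ is straightforward from the FP$_{\infty}$ hypothesis, but verifying that the conjugation action of $Q$ matches the free action on $RQ\otimes_{R}H^{i}(N,RN)$ requires a careful Mackey/Shapiro-type analysis involving the transversal and the conjugation automorphisms of $N$ coming from $G$. Once the structural isomorphism is available, the reduction in the first paragraph and the derivation of both (i) and (ii) are purely formal.
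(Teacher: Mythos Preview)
The paper does not supply its own proof of this proposition; it simply quotes the statement from \cite[Corollary 4.4]{ET2}. Your proposal is correct and is precisely the argument behind that cited result: the FP$_{\infty}$ hypothesis gives $H^{i}(N,RG)\cong\bigoplus_{Q}H^{i}(N,RN)$ as $R$-modules via the coset decomposition $RG=\bigoplus_{t\in T}RN\cdot t$, and the untwisting isomorphism you describe identifies the $Q$-action with the regular one on $RQ\otimes_{R}H^{i}(N,RN)$, after which (i) and (ii) are formal. You have also correctly located the only delicate step, namely matching the conjugation $Q$-action with the free one; the verification goes exactly as you outline.
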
 

\begin{Remark}\label{rem612}\rm Let $R$ be a commutative ring and $N$ be a group with ${\widetilde{\textrm{Gcd}}_{R}N}=n<\infty$. Using \cite[Proposition 3.6]{DE} we infer that ${H}^i (N,RN)=0$, for every $i>n$. If $N$ is, in addition, of type FP$_\infty$ over $R$, then ${H}^n (N,RN)\neq 0$. Indeed, since ${\widetilde{\textrm{Gcd}}_{R}N}=n<\infty$ \cite[Proposition 3.6]{DE} implies that there exists a projective $RN$-module $P$, such that ${H}^n (N,P)\neq 0$. Since $P$ is a direct summand of a direct sum of copies of $RN$ and the cohomology functor ${H}^n (N,\_\!\_)$, for the FP$_{\infty}$-group $N$, commutes with direct sums, we conclude that ${H}^n (N,RN)\neq 0$. \end{Remark}

\begin{Theorem}\label{theo613}Let $R$ be a commutative ring such that $\textrm{sfli}R<\infty$ and consider an extension of groups $1\rightarrow N \rightarrow G \rightarrow Q \rightarrow 1$, such that \begin{itemize}
		\item[(i)] $N$ is a group of type FP$_{\infty}$ over $R$,
		\item[(ii)] both ${\widetilde{\textrm{Gcd}}_{R}N}$ and ${\widetilde{\textrm{Gcd}}_{R}Q}$ are finite and
		\item[(iii)]if ${\widetilde{\textrm{Gcd}}_{R}N}=n$, then the $R$-modules ${H}^i (N,RN)$ are projective for every $i<n$ and ${H}^i (N,RN)$ contains a copy of $R$ as an $R$-module direct summand.\end{itemize}
	Then, we have ${\widetilde{\textrm{Gcd}}_{R}G}={\widetilde{\textrm{Gcd}}_{R}N}+{\widetilde{\textrm{Gcd}}_{R}Q}$.
\end{Theorem}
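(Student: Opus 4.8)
The plan is to prove the two inequalities separately. One of them, ${\widetilde{\textrm{Gcd}}_{R}G}\leq{\widetilde{\textrm{Gcd}}_{R}N}+{\widetilde{\textrm{Gcd}}_{R}Q}$, is exactly Proposition \ref{prop56} applied to the normal subgroup $N$ (recall $Q=G/N$ and $\textrm{sfli}R<\infty$); in particular this already gives ${\widetilde{\textrm{Gcd}}_{R}G}<\infty$. So, writing $n={\widetilde{\textrm{Gcd}}_{R}N}$ and $q={\widetilde{\textrm{Gcd}}_{R}Q}$ (finite by hypothesis (ii)), the whole content is the reverse inequality ${\widetilde{\textrm{Gcd}}_{R}G}\geq n+q$. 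Since ${\widetilde{\textrm{Gcd}}_{R}G}<\infty$, by \cite[Proposition 3.6]{DE} it suffices to exhibit a free $RG$-module $P$ with $\textrm{H}^{n+q}(G,P)\neq 0$.

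For the choice of $P$: since ${\widetilde{\textrm{Gcd}}_{R}Q}=q<\infty$, \cite[Proposition 3.6]{DE} provides a free $RQ$-module $F$, say $F=RQ^{(I)}$, with $\textrm{H}^{q}(Q,F)\neq 0$; I take $P=RG^{(I)}$. The point of passing to this (possibly infinitely generated) free module is that it activates the type FP$_{\infty}$ hypothesis on $N$: the functors $\textrm{H}^{r}(N,-)$ commute with arbitrary direct sums, so $\textrm{H}^{r}(N,P)\cong\textrm{H}^{r}(N,RG)^{(I)}$ as $RQ$-modules for every $r$. Now $RG$ is $RN$-free, and I would combine this with Remark \ref{rem612} (giving $\textrm{H}^{r}(N,RN)=0$ for $r>n$), hypothesis (iii), and Proposition \ref{prop611} to read off the $RQ$-modules $\textrm{H}^{r}(N,P)$: they vanish for $r>n$; they are projective for $r<n$ (Proposition \ref{prop611}(i) applied to the $R$-projective modules $\textrm{H}^{r}(N,RN)$); and $\textrm{H}^{n}(N,P)$ contains $F=RQ^{(I)}$ as an $RQ$-module direct summand (Proposition \ref{prop611}(ii) applied to $\textrm{H}^{n}(N,RN)$, which contains a copy of $R$ as a direct summand by (iii)).

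Next I would feed this into the Lyndon--Hochschild--Serre spectral sequence $E_{2}^{p,r}=\textrm{H}^{p}(Q,\textrm{H}^{r}(N,P))\Rightarrow\textrm{H}^{p+r}(G,P)$. The previous paragraph gives $E_{2}^{p,r}=0$ for $r>n$, and for $r<n$ it gives $E_{2}^{p,r}=\textrm{H}^{p}(Q,\textrm{projective})=0$ as soon as $p>q$, because ${\widetilde{\textrm{Gcd}}_{R}Q}=q$ (again \cite[Proposition 3.6]{DE}). A direct inspection of the differentials touching the entry $E_{s}^{q,n}$ then shows they all vanish: an incoming $d_{s}$ would originate in row $n+s-1>n$, where $E_{s}=0$; an outgoing $d_{s}$ lands in $E_{s}^{q+s,\,n-s+1}$, a subquotient of $\textrm{H}^{q+s}(Q,\textrm{H}^{n-s+1}(N,P))$, which is $0$ since $q+s>q$ and $\textrm{H}^{n-s+1}(N,P)$ is either $0$ or projective over $RQ$ (as $2\leq s$ forces $n-s+1<n$). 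Hence $E_{\infty}^{q,n}=E_{2}^{q,n}=\textrm{H}^{q}(Q,\textrm{H}^{n}(N,P))$, which is nonzero because it has $\textrm{H}^{q}(Q,F)\neq 0$ as a direct summand. Therefore $\textrm{H}^{n+q}(G,P)\neq 0$, which yields ${\widetilde{\textrm{Gcd}}_{R}G}\geq n+q$ and hence the claimed equality.

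The main obstacle is the spectral-sequence bookkeeping around $E^{q,n}$: one must be certain that no differential kills this entry, and this hinges precisely on the combination of the projectivity over $RQ$ of the lower rows $\textrm{H}^{r}(N,P)$ for $r<n$ (where hypothesis (iii) and Proposition \ref{prop611}(i) enter) with the bound ${\widetilde{\textrm{Gcd}}_{R}Q}=q$ forcing $\textrm{H}^{>q}(Q,-)$ to vanish on projectives. A secondary technical point, also resting on $N$ being of type FP$_{\infty}$ over $R$, is the identification $\textrm{H}^{r}(N,RG^{(I)})\cong\textrm{H}^{r}(N,RG)^{(I)}$ as $RQ$-modules, which is what lets the witness ``$\textrm{H}^{q}(Q,F)\neq 0$'' furnished by \cite[Proposition 3.6]{DE} survive to the $E_{2}$-page.
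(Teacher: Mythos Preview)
Your proposal is correct and follows essentially the same argument as the paper: both obtain the upper bound from Proposition \ref{prop56}, then choose a free $RG$-module of the same rank as a free $RQ$-module $F$ witnessing ${\widetilde{\textrm{Gcd}}_{R}Q}$, and use the Lyndon--Hochschild--Serre spectral sequence together with Proposition \ref{prop611} and the FP$_{\infty}$ hypothesis to show the $(q,n)$-entry survives with $\textrm{H}^{q}(Q,F)$ as a summand. The only cosmetic difference is that the paper phrases the vanishing pattern as ``$E_2$ concentrated on $[0,q]\times[0,n]$ together with the line $r=n$'' and deduces $\textrm{H}^{n+q}(G,P)=E^{\infty}_{q,n}$, whereas you check the differentials at the $(q,n)$-entry directly; both yield the same conclusion.
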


\begin{proof}We follow closely the argument of \cite[Theorem 4.5]{ET2}. Let ${\widetilde{\textrm{Gcd}}_{R}Q}=m$. Then, we have $m=\textrm{sup}\{i\in\mathbb{N}\,|\,H^i(Q,F)\neq 0,\,F \,RQ\textrm{-free}\}$ (see \cite[Proposition 3.6]{DE}). In particular, there exists a free $RQ$-module $F$ such that $H^m(Q,F)\neq 0$. Let $r$ be the rank of the free $RQ$-module $F$. Using Proposition \ref{prop56}, we have the inequality ${\widetilde{\textrm{Gcd}}_{R}G}\leq m+n$. Hence, in order to prove that ${\widetilde{\textrm{Gcd}}_{R}G}= m+n$, it suffices to find an appropriate free $RG$-module $F'$ such that $H^{m+n}(G,F')\neq 0$ (see \cite[Proposition 3.6]{DE}). Let $F'$ be the free $RG$-module of rank $r$. We may compute the cohomology groups of $G$ with coefficients in $F'$ using the Lyndon-Hochschild-Serre spectral sequence $E^2_{pq}=H^p(Q,H^q(N,F'))\Longrightarrow H^{p+q}(G,F')$. The hypothesis that $N$ is of type FP$_{\infty}$, yields an isomorphism of $RQ$-modules $H^q (N,F')\cong H^q(N, RG)^{(r)}$, for every $q\geq 0$. Since ${\widetilde{\textrm{Gcd}}_{R}N}=n$ and $RG$ is $RN$-projective, we obtain that $H^q (N,F')=H^q(N, RG)^{(r)}=0$ for every $q>n$ (see \cite[Proposition 3.6]{DE}), and hence $E^2_{pq}=H^p(Q,H^q(N,F'))=H^p(Q,0)=0$, for every $q>n$. Invoking assumption (iii) and Proposition \ref{prop611}(i), we infer that the cohomology groups $H^q (N,F')=H^q(N, RG)^{(r)}$ are projective $RQ$-modules. Since ${\widetilde{\textrm{Gcd}}_{R}Q}=m$, we obtain that $E^2_{pq}=H^p(Q,H^q(N,F'))=0$ if $p>m$ and $q<n$. Thus, the $E^2$ page of the spectral sequence is concentrated on the square $[0,m]\times [0,n]$ and the line $\{(p,n): p\geq 0\}$. We obtain that $H^{n+m}(G,F')=E^{\infty}_{mn}=E^2_{mn}=H^m(Q,H^n(N,F'))$. Moreover, assumption (iii) and Proposition \ref{prop611}(ii) imply that the $RQ$-module $H^n(N,F')=H^n(N,RG)^{(r)}$ contains a copy of the free $RQ$-module $F$ of rank $r$ as a direct summand. Hence, the abelian group $H^m(Q,H^n(N,F'))$ contains a copy of $H^m(Q,F)$ as a direct summand. Since $H^m(Q,F)\neq 0$, we conclude that $H^{n+m}(G,F')=H^m(Q,H^n(N,F'))\neq 0$. 
\end{proof}

\begin{Corollary}\label{cor614}Let $R$ be a commutative ring such that $\textrm{sfli}R<\infty$ and consider an extension of groups $1\rightarrow N \rightarrow G \rightarrow Q \rightarrow 1$, such that \begin{itemize}
		\item[(i)] $N$ is a group of type FP$_{\infty}$ over $R$,
		\item[(ii)] both ${\widetilde{\textrm{Gcd}}_{R}N}$ and ${\widetilde{\textrm{Gcd}}_{R}Q}$ are finite and
		\item[(iii)]if ${\widetilde{\textrm{Gcd}}_{R}N}=n$, then the $R$-modules ${H}^i (N,RN)$ are free for every $i\leq n$.\end{itemize}
	Then, we have ${\widetilde{\textrm{Gcd}}_{R}G}={\widetilde{\textrm{Gcd}}_{R}N}+{\widetilde{\textrm{Gcd}}_{R}Q}$.
\end{Corollary}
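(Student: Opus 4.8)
The plan is to derive this corollary directly from Theorem \ref{theo613}; indeed, hypotheses (i) and (ii) here are verbatim the same, so the only point that needs to be addressed is that hypothesis (iii) of the present statement implies hypothesis (iii) of Theorem \ref{theo613}. Writing $n={\widetilde{\textrm{Gcd}}_{R}N}$, the present assumption (iii) asserts that the $R$-module $H^{i}(N,RN)$ is free for every $i\leq n$, and I must extract from this the two clauses appearing in Theorem \ref{theo613}: that $H^{i}(N,RN)$ is projective for $i<n$, and that $H^{n}(N,RN)$ contains a copy of $R$ as an $R$-module direct summand.

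First I would observe that for every $i<n$ a free $R$-module is in particular projective, so the first clause is immediate. For the top degree $i=n$, I would argue that $H^{n}(N,RN)$, being free over $R$, contains a copy of $R$ as a direct summand as soon as it is nonzero; and its nonvanishing is exactly the content of Remark \ref{rem612}, which applies because ${\widetilde{\textrm{Gcd}}_{R}N}=n<\infty$ and $N$ is of type FP$_{\infty}$ over $R$. Hence both clauses of hypothesis (iii) of Theorem \ref{theo613} hold.

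With hypotheses (i), (ii), (iii) of Theorem \ref{theo613} verified, the conclusion ${\widetilde{\textrm{Gcd}}_{R}G}={\widetilde{\textrm{Gcd}}_{R}N}+{\widetilde{\textrm{Gcd}}_{R}Q}$ follows at once. I expect essentially no obstacle here: the entire content is the elementary reduction above, and the only ingredient not already used inside the proof of Theorem \ref{theo613} is the nonvanishing statement of Remark \ref{rem612}, which is itself a short consequence of \cite[Proposition 3.6]{DE} together with the fact that for an FP$_{\infty}$ group the functor $H^{n}(N,\_\!\_)$ commutes with direct sums.
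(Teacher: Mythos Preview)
Your proposal is correct and follows exactly the same approach as the paper: verify hypothesis (iii) of Theorem \ref{theo613} by noting that free modules are projective and that the nonzero free $R$-module $H^{n}(N,RN)$ (nonzero by Remark \ref{rem612}) contains $R$ as a direct summand, then apply Theorem \ref{theo613}.
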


\begin{proof}Since $N$ is of type FP$_{\infty}$ and ${\widetilde{\textrm{Gcd}}_{R}N}=n$, we have ${H}^n (N,RN)\neq 0$ (see Remark \ref{rem612}). Hence, the result is an immediate consequence of Theorem \ref{theo613}.
\end{proof}

\section{The PGF dimension of $\textsc{\textbf{lh}}\mathfrak{F}$-groups}Our goal in this section is to determine the PGF dimension ${\widetilde{\textrm{Gcd}}_{R}G}$ of an $\textsc{\textbf{lh}}\mathfrak{F}$-group $G$ over a commutative ring of finite Gorenstein global dimension, in terms of the projective dimension of the $RG$-module $B(G,R)$.

\begin{Definition}Let $R$ be a commutative ring and $G$ be a group.
	
	$\textrm{k}(RG):=\textrm{sup}\{\textrm{pd}_{RG}M \, : \, M\in \textrm{Mod}(RG), \, \textrm{pd}_{RH}M<\infty \, \textrm{for every finite} \,\, H\leq G\}$.
	
	$\textrm{fin.dim}(RG):=\textrm{sup}\{\textrm{pd}_{RG}M \, : \, M\in \textrm{Mod}(RG), \, \textrm{pd}_{RG}M<\infty\}$.
\end{Definition}

\begin{Lemma}\label{prop224}Let $R$ be a commutative ring and consider an $\textsc{\textbf{lh}}\mathfrak{F}$-group $G$. Then, $\textrm{k}(RG)\leq\textrm{fin.dim}(RG)$.
\end{Lemma}

\begin{proof}The proof is identical to that of \cite[Lemma 3.11]{Bis2} and holds for any commutative ring.
\end{proof}

\begin{Lemma}\label{prop225}Let $R$ be a commutative ring such that $\textrm{spli}R<\infty$ and $G$ be a group. Then, $\textrm{spli}(RG)\leq \textrm{k}(RG)$.
\end{Lemma}

\begin{proof}It suffices to assume that $\textrm{k}(RG)=n<\infty$. Let $I$ be an injective $RG$-module and $H$ be a finite subgroup of $G$. Then, $\textrm{spli}(RH)=\textrm{spli}R <\infty$, by Corollary \ref{cor44}. Since $I$ is injective as $RH$-module, we obtain that $\textrm{pd}_{RH}I<\infty$. It follows that $\textrm{pd}_{RG}I\leq \textrm{k}(RG)=n $, for every injective $RG$-module $I$. We conclude that $\textrm{spli}(RG)\leq \textrm{k}(RG)$, as needed.
\end{proof}

\begin{Proposition}\label{cor226}Let $R$ be a commutative ring such that $\textrm{spli}R<\infty$ and $G$ be an $\textsc{\textbf{lh}}\mathfrak{F}$-group. Then, $\textrm{k}(RG)=\textrm{spli}(RG)=\textrm{silp}(RG)=\textrm{fin.dim}(RG)$.
\end{Proposition}

\begin{proof}Since $RG\cong {(RG)}^{\textrm{op}}$, invoking \cite[Corollary 5.4]{DE} and \cite[Lemma 3.9(b)]{Bis2}, we infer that $\textrm{fin.dim}(RG)\leq \textrm{silp}(RG)\leq \textrm{spli}(RG)$. Therefore, using Lemma \ref{prop224} and Lemma \ref{prop225}, we have $\textrm{k}(RG)\leq \textrm{fin.dim}(RG)\leq \textrm{silp}(RG)\leq \textrm{spli}(RG)\leq \textrm{k}(RG)$ and we obtain the equalities.
\end{proof}

\begin{Remark}\label{Rem711} \rm Since the $RG$-module $B(G,R)$ is $R$-free and admits an $R$-split $RG$-linear monomorphism $\iota: R \rightarrow B(G,R)$, we infer that $B(G,R)$ is a characteristic module for $G$ over $R$ if and only if $\textrm{pd}_{RG} B(G,R)<\infty$.
\end{Remark}

\begin{Theorem}\label{Theo712}Let $R$ be a commutative ring such that $\textrm{spli}R<\infty$ and consider an $\textsc{\textbf{lh}}\mathfrak{F}$-group $G$. Then:
	\begin{itemize}
		\item[(i)] $B(G,R)$ is a characteristic module for $G$ iff ${\widetilde{\textrm{Gcd}}_R G}<\infty$,
		\item[(ii)] ${\widetilde{\textrm{Gcd}}_R G}=\textrm{Gcd}_R G=\textrm{pd}_{RG} B(G,R)$.
	\end{itemize}
\end{Theorem}

\begin{proof}(i) If $B(G,R)$ is a characteristic module, then Theorem \ref{theorr} implies that ${\widetilde{\textrm{Gcd}}_R G}<\infty$. Conversely, we assume that ${\widetilde{\textrm{Gcd}}_R G}<\infty$. Then, Proposition \ref{cor226} yields $\textrm{k}(RG)=\textrm{spli}(RG)\leq {\widetilde{\textrm{Gcd}}_R G}+ \textrm{spli}R <\infty$ (see Proposition \ref{prop2}). Since $B(G,R)$ is free as $RH$-module for every finite subgroup $H$ of $G$, we infer that $\textrm{pd}_{RG}B(G,R)\leq \textrm{k}(RG)<\infty$. Therefore, $B(G,R)$ is a characteristic module for $G$ over $R$ (see Remark \ref{Rem711}).
	
	(ii) Using (i) and Remark \ref{Rem711}, we have ${\widetilde{\textrm{Gcd}}_R G}=\infty$ if and only if $\textrm{pd}_{RG}B(G,R)=\infty$. If $\widetilde{\textrm{Gcd}}_R G<\infty$, then (i) implies that $B(G,R)$ is a characteristic module for $G$ over $R$, and hence, invoking Corollary \ref{cor38} and Proposition \ref{newprop}, we conclude that ${\widetilde{\textrm{Gcd}}_R G}=\textrm{Gcd}_R G=\textrm{pd}_{RG} B(G,R)$.
\end{proof}

\begin{Remark}\rm \label{Rem78} Let $\Lambda$ be a characteristic module for a group $G$ over $\mathbb{Z}$. Then the $RG$-module $\Lambda \otimes_{\mathbb{Z}} R$ is a characteristic module for $G$ over $R$. Indeed, since the $\mathbb{Z}G$-module $\Lambda$ is $\mathbb{Z}$-projective, the $RG$-module $\Lambda \otimes_{\mathbb{Z}} R$ is $R$-projective. Let $\textbf{P}$ be a $\mathbb{Z}G$-projective resolution of $\Lambda$ of finite length. Since $\Lambda$ is $\mathbb{Z}$-projective, $\textbf{P}$ is $\mathbb{Z}$-split and hence $\textbf{P}\otimes_\mathbb{Z} R$ is an $RG$-projective resolution of $\Lambda \otimes_\mathbb{Z} R$. It follows that $\textrm{pd}_{RG}(\Lambda \otimes_\mathbb{Z} R)<\infty$. Moreover, every $\mathbb{Z}$-split $\mathbb{Z}G$-linear monomorphism $\iota: \mathbb{Z} \rightarrow \Lambda$ induces an $R$-split $RG$-linear monomorphism $\iota \otimes 1 : R \rightarrow \Lambda \otimes_\mathbb{Z} R$.
\end{Remark}

\begin{Corollary}\label{cor88}Let $R$ be a commutative ring such that $\textrm{spli}R<\infty$ and $G$ be an $\textsc{\textbf{lh}}\mathfrak{F}$-group of type FP$_{\infty}$. Then, ${\widetilde{\textrm{Gcd}}_R G}=\textrm{Gcd}_R G=\textrm{pd}_{RG} B(G,R)<\infty$.
\end{Corollary}

\begin{proof}Invoking \cite[Corollary B.2(2)]{KRR}, which is also valid for $\textsc{\textbf{lh}}\mathfrak{F}$-groups, we infer that $\textrm{pd}_{\mathbb{Z}G} B(G,\mathbb{Z})<\infty$ and hence $B(G,\mathbb{Z})$ is a characteristic module for $G$ over $\mathbb{Z}$ (see Remark \ref{Rem78}). Consequently, the $RG$-module $B(G,R)=B(G,\mathbb{Z})\otimes_{\mathbb{Z}}R$ is a characteristic module for $G$ over $R$ and has finite projective $RG$-dimension (see Remark \ref{Rem78}). Thus, Theorem \ref{Theo712} yields ${\widetilde{\textrm{Gcd}}_{R}G}=\textrm{Gcd}_R G=\textrm{pd}_{RG} B(G,R)<\infty$, as needed.
\end{proof}

\begin{Corollary}Let $R$ be a commutative ring such that $\textrm{spli}R<\infty$ and $G$ be an $\textsc{\textbf{lh}}\mathfrak{F}$-group of type FP$_{\infty}$. Then, $\textrm{k}(RG)=\textrm{spli}(RG)=\textrm{silp}(RG)=\textrm{fin.dim}(RG)<\infty$. In particular, if $M$ is an $RG$-module, then $\textrm{pd}_{RG}M<\infty$ if and only if $\textrm{pd}_{RH}M<\infty$ for every finite subgroup $H$ of $G$.
\end{Corollary}

\begin{proof}In view of Corollary \ref{cor226}, it suffices to prove that $\textrm{spli}(RG)<\infty$. Invoking Corollary \ref{cor88} and Remark \ref{Rem711}, we infer that $B(G,R)$ is a characteristic module for $G$ over $R$. Thus, Theorem \ref{theorr} yields the finiteness of $\textrm{spli}(RG)$, as needed.\end{proof}

\section{Groups over which every Gorenstein projective module is PGF}Let $R$ be a commutative ring and $G$ be a group. In this final section we prove that every cofibrant $RG$-module is PGF. Under the assumptions $\textrm{gl.dim}R<\infty$ and $G$ be an  $\textsc{\textbf{lh}}\mathfrak{F}$-group or of type $\Phi_R$, we obtain that every Gorenstein projective $RG$-module is also a Gorenstein flat $RG$-module.

\begin{Definition}Let $M$ be an $RG$-module. Then $M$ is cofibrant if the $RG$-module $M \otimes_R B(G, R)$ is projective.
\end{Definition}

\begin{Proposition}
	Let $R$ be a commutative ring and $G$ be a group. Then, every cofibrant $RG$-module $M$ is PGF.
\end{Proposition}

\begin{proof}
	We let $B=B(G,R)$, $\overline{B}=\overline{B}(G,R)$ and consider an $RG$-module $M$ such that the $RG$-module $M\otimes_R B$ is projective. We also let $V_i=\overline{B}^{\otimes i}\otimes_R B$ for every $i\geq 0$, where $\overline{B}^{\otimes 0}=R$. Since the short exact sequence of $RG$-modules $0\rightarrow R \rightarrow B \rightarrow \overline{B}\rightarrow 0$ is $R$-split, we obtain for every $i\geq 0$ a short exact sequence of $RG$-modules of the form $$0\rightarrow M\otimes_R\overline{B}^{\otimes i}\rightarrow M\otimes_R V_i \rightarrow M\otimes_R \overline{B}^{\otimes i+1}\rightarrow 0.$$ Then, the splicing of the above short exact sequences for every $i\geq 0$ yields an exact sequence of the form
	\begin{equation}\label{eeq1}
		0\rightarrow M \xrightarrow{\alpha} M\otimes_R V_0 \rightarrow M\otimes_R V_1 \rightarrow M\otimes_R V_2 \rightarrow \cdots.
	\end{equation} 
	Since the $RG$-module $M\otimes_R B$ is projective and $\overline{B}$ is $R$-projective, we obtain that the $RG$-module  $M\otimes_R V_i\cong (M\otimes_R B)\otimes_R \overline{B}^{\otimes i}$ is projective for every $i\geq 0$. We also consider an $RG$-projective resolution of $M$
	\begin{equation*}
		\textbf{Q}=\cdots \rightarrow Q_2 \rightarrow Q_1 \rightarrow Q_0 \xrightarrow{\beta} M \rightarrow 0.
	\end{equation*} 
	Splicing the resolution $\textbf{Q}$ with the exact sequence (\ref{eeq1}), we obtain an acyclic complex of projective $RG$-modules 
	\begin{equation*}
		\mathfrak{P}=\cdots \rightarrow Q_2 \rightarrow Q_1 \rightarrow Q_0 \xrightarrow{\alpha \beta} M\otimes_R V_0 \rightarrow M\otimes_R V_1 \rightarrow M\otimes_R V_2 \rightarrow \cdots
	\end{equation*} 
	which has syzygy the $RG$-module $M$. It suffices to prove that the complex $I\otimes_{RG}\mathfrak{P}$ is acyclic for every injective $RG$-module $I$. Let $I$ be an injective $RG$-module. Then, the $R$-split short exact sequence of $RG$-modules $0\rightarrow R \rightarrow B \rightarrow \overline{B}\rightarrow 0$ yields an induced exact sequence of $RG$-modules $0\rightarrow I\rightarrow B \otimes_{R}I\rightarrow \overline{B}\otimes_{R} I\rightarrow 0$ which is $RG$-split. Thus, it suffices to prove that the complex $(B\otimes_{R}I)\otimes_{RG}\mathfrak{P}$ is acyclic. Since $B$ is $R$-projective, we obtain that the acyclic complex $\textbf{Q}\otimes_R B$ is a projective resolution of the projective $RG$-module $M\otimes_{R}B$. Hence, every syzygy module of $\textbf{Q}\otimes_R B$ is also a projective $RG$-module. Moreover, the $RG$-module $(M\otimes_R B)\otimes_R \overline{B}^{\otimes i}\cong (M\otimes_R \overline{B}^{\otimes i})\otimes_R B$ is projective for every $i\geq 0$. Consequently, every syzygy module of the acyclic complex
	\begin{equation*}
		\mathfrak{P}\otimes_R B =\cdots\rightarrow  Q_1\otimes_R B\rightarrow  Q_0\otimes_R B \rightarrow  M\otimes_R V_0 \otimes_R B\rightarrow  M\otimes_R V_1 \otimes_R B\rightarrow \cdots
	\end{equation*} is a projective $RG$-module and hence $\mathfrak{P}\otimes_R B$ is contractible. We conclude that the complex $(B\otimes_{R}I)\otimes_{RG}\mathfrak{P}\cong I\otimes_{RG}(\mathfrak{P}\otimes_{R}B)$ is acyclic, as needed.\end{proof}

\begin{Proposition}{\rm(\cite[Corollary 5.5]{Bis})} Let $G$ be a group in $\textsc{\textbf{lh}}\mathfrak{F}$ or of type $\Phi_R$ over a commutative ring $R$ of finite global dimension. Then, the class of weak Gorenstein projective $RG$-modules, the class of Gorenstein projective $RG$-modules and the class of Benson's cofibrant $RG$-modules all coincide.
\end{Proposition}

\begin{Theorem}\label{theo94}Let $G$ be a group in $\textsc{\textbf{lh}}\mathfrak{F}$ or of type $\Phi_R$ over a commutative ring $R$ of finite global dimension. Then, the class of Gorenstein projective $RG$-modules coincides with the class of PGF $RG$-modules and hence every Gorenstein projective $RG$-module is Gorenstein flat.
\end{Theorem}

\begin{proof}Let ${\tt Cof}(RG)$ be the class of cofibrant $RG$-modules. Invoking Propositions 7.7 and 7.8 we have ${\tt GProj}(RG)={\tt Cof}(RG)\subseteq {\tt PGF}(RG)$. Moreover, ${\tt PGF}(RG)\subseteq {\tt GProj}(RG)$ by \cite[Theorem 4.4]{SS}. We conclude that ${\tt GProj}(RG)={\tt PGF}(RG)$.\end{proof}

\begin{Corollary}Let $G$ be a group in $\textsc{\textbf{lh}}\mathfrak{F}$ or of type $\Phi_R$ over a commutative ring $R$ of finite global dimension. Then, for every $RG$-module $M$ we have $\textrm{Gfd}_{RG}M\leq \textrm{PGF-dim}_{RG}M=\textrm{Gpd}_{RG}M$.
\end{Corollary}

\section*{Acknowledgments}Research supported by the Hellenic Foundation for Research and Innovation (H.F.R.I.) under the ``1st Call for H.F.R.I. Research Projects to support Faculty members and Researchers and the procurement of high-cost research equipment grant”, project number 4226. The author wishes to thank the anonymous referee for useful comments.


\vspace{0.05in}

{\small {\sc Department of Mathematics,
             National and Kapodistrian University of Athens,
             Athens 15784,
             Greece}}

{\em E-mail address:} {\tt dstergiop@math.uoa.gr}

\end{document}